\documentclass[11pt,letterpaper]{amsart}
\usepackage[usenames,dvipsnames]{color}
\usepackage{amsthm,amsfonts,amssymb,amsmath,amsxtra}
\usepackage[all]{xy}
\SelectTips{cm}{}
\usepackage{xr-hyper}
\usepackage[pdfpagelabels,pdftex,hidelinks]{hyperref}
\hypersetup{
  colorlinks   = true, %Colours links
  urlcolor     = blue, %Colour for external hyperlinks
  linkcolor    = blue, %Colour of internal links
  citecolor   = blue, %Colour of citations
pdftitle={},
  pdfauthor={},
  pdfsubject={},
  pdfkeywords={},
  breaklinks=true,
  bookmarksopen=true,
  bookmarksnumbered=true,
  pdfpagemode=UseOutlines,
  plainpages=false}
\usepackage{cleveref}

\usepackage{verbatim}
\usepackage{caption}

\usepackage{lineno}
\usepackage{mathrsfs}

\RequirePackage{xspace}
\RequirePackage{etoolbox}
\RequirePackage{varwidth}
\RequirePackage{enumitem}
\RequirePackage{mathtools}
\RequirePackage{longtable}
\RequirePackage{multirow}

\def\ge{\geqslant}
\def\le{\leqslant}
\def\a{\alpha}
\def\b{\beta}
\def\g{\gamma}

\def\d{\delta}
\def\D{\Delta}

\def\e{\epsilon}

\def\p{\pi}

\def\ps{\psi}

\def\t{\tau}
\def\th{\theta}
\def\k{\kappa}
\def\l{\lambda}

\def\i{^{-1}}

\def\<{\langle}
\def\>{\rangle}

\newcommand{\fkX}{\ensuremath{\mathfrak{X}}\xspace}

\newcommand{\BA}{\ensuremath{\mathbb {A}}\xspace}

\newcommand{\BF}{\ensuremath{\mathbb {F}}\xspace}
\newcommand{{\BG}}{\ensuremath{\mathbb {G}}\xspace}

\newcommand{{\BK}}{\ensuremath{\mathbb {K}}\xspace}

\newcommand{\BQ}{\ensuremath{\mathbb {Q}}\xspace}
\newcommand{\BR}{\ensuremath{\mathbb {R}}\xspace}

\newcommand{\BZ}{\ensuremath{\mathbb {Z}}\xspace}

\newcommand{\CA}{\ensuremath{\mathcal {A}}\xspace}
\newcommand{\CB}{\ensuremath{\mathcal {B}}\xspace}

\newcommand{\CG}{\ensuremath{\mathcal {G}}\xspace}

\newcommand{\CI}{\ensuremath{\mathcal {I}}\xspace}

\newcommand{\CO}{\ensuremath{\mathcal {O}}\xspace}

\newcommand{\CR}{\ensuremath{\mathcal {R}}\xspace}

\newcommand{\CT}{\ensuremath{\mathcal {T}}\xspace}

\newcommand{\GL}{\mathrm{GL}}

\DeclareMathOperator{\Hom}{Hom}

\let\Im\relax
\DeclareMathOperator{\Im}{Im}

\newcommand{\red}{\ensuremath{\mathrm{red}}\xspace}

\newcommand{\ov}{\overline}

\def\tW{\tilde W}

\def\brk{{\breve k}}

\def\COk{{\CO_{\brk}}}

\def\pr{{\rm pr}}
\def\tPhi{\widetilde \Phi}

\def\ind{{\rm ind}}

\def\bx{{\mathbf x}}

\def\der{{\rm der}}
\def\ov{\overline}

\def\tSigma{{\tilde \Sigma}}

\def\re{{\rm re}}
\def\dz{{\dot z}}

\newtheorem{theorem}{Theorem}
\newtheorem{proposition}[theorem]{Proposition}
\newtheorem{lemma}[theorem]{Lemma}

\newtheorem{corollary}[theorem]{Corollary}

\theoremstyle{definition}

\newtheorem{remark}[theorem]{Remark}

\numberwithin{equation}{section}
\numberwithin{theorem}{section}

\setitemize[0]{leftmargin=*,itemsep=\the\smallskipamount}
\setenumerate[0]{leftmargin=*,itemsep=\the\smallskipamount}

\renewcommand{\to}{%
   \ifbool{@display}{\longrightarrow}{\rightarrow}%
   }
\let\shortmapsto\mapsto
\renewcommand{\mapsto}{%
   \ifbool{@display}{\longmapsto}{\shortmapsto}%
   }
\newlength{\olen}
\newlength{\ulen}
\newlength{\xlen}
\newcommand{\xra}[2][]{%
   \ifbool{@display}%
      {\settowidth{\olen}{$\overset{#2}{\longrightarrow}$}%
       \settowidth{\ulen}{$\underset{#1}{\longrightarrow}$}%
       \settowidth{\xlen}{$\xrightarrow[#1]{#2}$}%
       \ifdimgreater{\olen}{\xlen}%
          {\underset{#1}{\overset{#2}{\longrightarrow}}}%
          {\ifdimgreater{\ulen}{\xlen}%
             {\underset{#1}{\overset{#2}{\longrightarrow}}}
             {\xrightarrow[#1]{#2}}}}%
      {\xrightarrow[#1]{#2}}
   }
\makeatother
\newcommand{\xyra}[2][]{%
   \settowidth{\xlen}{$\xrightarrow[#1]{#2}$}%
   \ifbool{@display}%
      {\settowidth{\olen}{$\overset{#2}{\longrightarrow}$}%
       \settowidth{\ulen}{$\underset{#1}{\longrightarrow}$}%
       \ifdimgreater{\olen}{\xlen}%
          {\mathrel{\xymatrix@M=.12ex@C=3.2ex{\ar[r]^-{#2}_-{#1} &}}}%
          {\ifdimgreater{\ulen}{\xlen}%
             {\mathrel{\xymatrix@M=.12ex@C=3.2ex{\ar[r]^-{#2}_-{#1} &}}}
             {\mathrel{\xymatrix@M=.12ex@C=\the\xlen{\ar[r]^-{#2}_-{#1} &}}}}}%
      {\mathrel{\xymatrix@M=.12ex@C=\the\xlen{\ar[r]^-{#2}_-{#1} &}}}%
   }
\makeatletter
\newcommand{\xla}[2][]{%
   \ifbool{@display}%
      {\settowidth{\olen}{$\overset{#2}{\longleftarrow}$}%
       \settowidth{\ulen}{$\underset{#1}{\longleftarrow}$}%
       \settowidth{\xlen}{$\xleftarrow[#1]{#2}$}%
       \ifdimgreater{\olen}{\xlen}%
          {\underset{#1}{\overset{#2}{\longleftarrow}}}%
          {\ifdimgreater{\ulen}{\xlen}%
             {\underset{#1}{\overset{#2}{\longleftarrow}}}
             {\xleftarrow[#1]{#2}}}}%
      {\xleftarrow[#1]{#2}}
   }
\newcommand{\isoarrow}{%
   \ifbool{@display}{\overset{\sim}{\longrightarrow}}{\xrightarrow\sim}%
   }

\newcommand{\sm}{{\,\smallsetminus\,}}

\usepackage{upgreek}
\makeatletter
\newcommand{\colim@}[2]{%
  \vtop{\m@th\ialign{##\cr
    \hfil$#1\operator@font lim$\hfil\cr
    \noalign{\nointerlineskip\kern1.5\ex@}#2\cr
    \noalign{\nointerlineskip\kern-\ex@}\cr}}%
}
\newcommand{\colim}{%
  \mathop{\mathpalette\colim@{\rightarrowfill@\textstyle}}\nmlimits@
}
\newcommand{\prolim@}[2]{%
  \vtop{\m@th\ialign{##\cr
    \hfil$#1\operator@font lim$\hfil\cr
    \noalign{\nointerlineskip\kern1.5\ex@}#2\cr
    \noalign{\nointerlineskip\kern-\ex@}\cr}}%
}
\newcommand{\prolim}{%
  \mathop{\mathpalette\colim@{\leftarrowfill@\textstyle}}\nmlimits@
}
\makeatother

\DeclareMathOperator{\Perf}{Perf}

\begin{document}

\title[]{A geometric construction of supercuspidal representations} 

\author[Ben Liu]{Ben Liu}
\address{Academy of Mathematics and Systems Science, Chinese Academy of Sciences, Beijing 100190, China}
\email{liubenmath@gmail.com}

\author[Sian Nie]{Sian Nie}
\address{Academy of Mathematics and Systems Science, Chinese Academy of Sciences, Beijing 100190, China}

\address{School of Mathematical Sciences, University of Chinese Academy of Sciences, Chinese Academy of Sciences, Beijing 100049, China}
\email{niesian@amss.ac.cn}

\author{Zhihang Yu}
\address{The Second Affiliated Hospital of Chongqing Medical University, Chongqing 400010, China}
\address{The National Center for Applied Mathematics in Chongqing, Chongqing 401331, China}
\email{yuzhihang@amss.ac.cn}

\begin{abstract}
We extend the framework of deep level Deligne-Lusztig varieties to the case of arbitrary residue characteristics $p > 0$. As an application, we provide a construction of a large class of (new) supercuspidal representations for unramified $p$-adic groups with arbitrary residue fields, in a purely geometric way.  
\end{abstract}

\maketitle

\section{Introduction}
Supercuspidal representations are building blocks in the representation theory of $p$-adic groups. A well-known folklore conjecture states that each supercuspidal representation is compactly induced from a representation of some compact modulo center subgroup. For inner forms of $\GL_n$, the conjecture is proved in \cite{BK93} and \cite{SS08}. For classical groups, it is proved in \cite{Ste08} for $p \neq 2$. Based on the influential construction of Yu, it is proved in \cite{Kim} and \cite{Fin21b} for an arbitrary $p$-adic group, provided that $p$ satisfies a mild largeness condition. For small $p$, this folklore conjecture remains wildly open.

It is desirable to provide a construction of supercuspidal representations by compact inductions, which works for all primes $p$. Such an attempt has been carried out in \cite{FS} by extending the framework of Yu's algebraic construction. In this paper, we present a purely geometric approach by developing the theory of deep level Deligne-Lusztig representations. It works for all residue fields, and supplies a large class of new (irreducible) supercuspidal representations.   

The notion of deep level Deligne-Lusztig representations was first introduced by Lusztig \cite{Lu79}, aiming to provide a geometric realization of G\'{e}rardin's construction of supercuspidals \cite{Ger75a}. Over the past dozen years, considerable progress has been made on this subject, including the Mackey formula, explicit comparison with supercuspidals, geometrization via character sheaves, and various applications to the local Langlands correspondence. We refer to \cite{BW16}, \cite{Chan20}, \cite{CI23}, \cite{CS23}, \cite{Iva23}, \cite{CO25a}, \cite{BC24}, \cite{Chan24}, \cite{Nie24}, \cite{IN25}, \cite{CO25b}, \cite{INY25}, \cite{IN26}, \cite{Tak26}, \cite{LN26} for the details.

\medskip

Let $G$ be a connected reductive group defined over a non-archimedean local field $k$ with residue characteristic $p$ and ring of integers $\CO_k$. Fix a point $\bx$ in the Bruhat-Tits building and denote by $\CG_{\bx, 0}$ the associated connected parahoric group scheme. Let $T \subseteq G$ be an unramified maximal torus whose apartment contains $\bx$. 

Similar to the classical Deligne-Lusztig theory \cite{DL76}, the deep level analog can be viewed as a functor which associates to each smooth character $\phi$ of a $p$-adic maximal unramified torus $T$ a virtual smooth representation $R(\phi, X)$ of $Z(k) \CG_{\bx, 0}(\CO_k)$, via the cohomological induction of a suitable subvariety $X$ of the (truncated) jet scheme of $\CG_{\bx, 0}$. Here $Z$ is the center of $G$. In the original construction of Lusztig, $X$ is chosen to be a straightforward extension of the classical Deligne-Lusztig variety \cite{Lu79, CI21}. This definition works for all primes $p$ but is hard to study in general. On the other hand, motivated by the pioneering work \cite{CS17, CS23}, the second author introduced another variety $Z_\phi$ under a mild condition on $p$ and proved that the new representation $R(\phi, Z_\phi)$ coincides with $R(\phi) = R(\phi, X)$ in the elliptic case. The behavior of this new construction is much easier to handle in various aspects such as decomposition into irreducible factors \cite{Nie24}, construction of character sheaves \cite{INY25} and distinction problems for symmetric pairs \cite{LN26}. However, almost all of the known results on $R(\phi, X)$, as well as the construction of $Z_\phi$, are built upon the assumption that $\phi$ admits a Howe factorization in the sense of Kaletha \cite{Kal}, which fails when $p$ is bad for $G$ or $p \mid |\pi_1(G_\der)|$.

In this note, we extend the definition of $Z_\phi$ and $\CR(\phi) = R(\phi, Z_\phi)$ for all primes $p$. The construction is based on the root filtration \[\tag{$\ast$} \emptyset \subsetneq \Phi^0 \subsetneq \Phi^1 \subsetneq \cdots \subsetneq \Phi^d = \Phi(G, T)\] associated to $\phi$ considered in \cite[\S 3.6]{Kal}. However, when $p$ is small, two difficulties arise. The first one is that some subsets in $(*)$ are no longer closed under addition. The second one is essential: there is a connected subgroup $\CT_\phi$ in the $\CO_k$-model $\CT$ for $T$ acting on the variety $Z_\phi$ but $\phi$ is non-trivial over $\CT_\phi(\CO_k)$, which implies that the $\phi$-isotypic part of the cohomology of $Z_\phi$ vanishes and hence $\CR(\phi) = 0$. Thus to produce nontrivial results, we impose the following condition \[\tag{\dag} \text{$\phi$ is trivial over $\CT_\phi(\CO_k)$}.\]

\begin{remark} \label{rmk:dag}
    We record two comments on the condition $(\dag)$. \begin{itemize}
        \item  The condition ($\dag$) holds true if $\phi$ admits a Howe factorization in the sense of Kaletha, which always exists if $p$ is not a bad prime for $G$ and $p \nmid |\p_1(G_\der)|$ by \cite[Proposition 3.6.7]{Kal}.

        \item When $q=2$, there are plenty of characters $\phi$ such that the condition ($\dag$) holds true but some subsets in $(*)$ are not closed under addition and hence the Howe factorization fails.
    \end{itemize} 
\end{remark}
%By abuse of notation, we also denote by $\phi$ its restriction to $T(k)\cap\CG_{\bx,0}(\CO_k)$ and the induced character of $T_r^F$ whenever defined. Throughout the paper, all Weyl group stabilizers of $\phi$ and all conditions $^z\phi=\phi$ refer to its restriction. The original character of $T(k)$ is used to specify the central action in the compact induction construction.

Our first main result is a dimension formula of virtual intertwining spaces. Here we 
\begin{theorem} \label{intro-Mackey}
    Assume $(\dag)$ holds. For $w \in G(k)$ we have 
    \[\dim I_w(\CR(\phi)) = |\{z \in (W_\bx z_w W_\bx)^F; {}^z \phi |_{\CT(\CO_k)} = \phi|_{\CT(\CO_k)}\}|,\] where $W_\bx$ is the (absolute) Weyl group of $\CT$ in $\CG_{\bx, 0}$, $F$ is the Frobenius map of $G$ and $z_w$ is the Iwahori-Weyl group element associated to $w$.
\end{theorem}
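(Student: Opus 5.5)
We prove the formula by the standard Deligne--Lusztig/Lusztig approach, computing the intertwining space as the cohomology of a double quotient. Write $\CR(\phi)=\sum_i(-1)^iH^i_c(Z_\phi,\overline{\BQ}_\ell)[\phi]$ as a virtual representation of $\CG_{\bx,0}(\CO_k)$ (extended to $Z(k)\CG_{\bx,0}(\CO_k)$ via $\phi$ on the center), realized at a sufficiently deep truncation level $r$. Here $I_w$ is the virtual intertwining space
\[\sum_i(-1)^i\dim\Hom_{K\cap {}^wK}\bigl(\CR(\phi),{}^w\CR(\phi)\bigr),\qquad K=\CG_{\bx,0}(\CO_k).\]
By Frobenius reciprocity and the Künneth formula,
\[\dim I_w(\CR(\phi))=\sum_i(-1)^i\dim H^i_c\bigl((K\cap {}^wK)\backslash (Z_\phi\times {}^w Z_\phi)\bigr)[\phi^{-1}\boxtimes {}^w\phi].\]
We realize this as the cohomology of a variety $\Sigma_w$ carrying an action of $T_r^F\times T_r^F$.

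First, we stratify $\Sigma_w$ according to the relative position of the two Borel-type (deep level) flags. For deep level jet groups, the relative position is measured by double cosets of $K\cap{}^wK$ (equivalently of the jet group $\CG_{\bx,0,r}$), which reduce to $W_\bx z_w W_\bx$ via the Bruhat-type decomposition relative to $\CT$. This gives a finite decomposition $\Sigma_w=\bigsqcup_{z}\Sigma_{w,z}$ indexed by $z\in W_\bx z_w W_\bx$. Each piece is a fibration whose base is a product of tori and affine spaces and whose fibers lie in the unipotent radicals attached to the filtration $(\ast)$. Additivity of Euler characteristics reduces the problem to computing each $H^*_c(\Sigma_{w,z})[\phi^{-1}\boxtimes{}^w\phi]$. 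The Frobenius $F$ acts on the set of strata, and only $F$-stable strata contribute.

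Second, on each stratum we follow the strategy of \cite{Nie24} (see also \cite{CI21}). Here we use the specific shape of $Z_\phi$, whose defining conditions involve only root subgroups in the successive levels of $(\ast)$. We then find a torus action on $\Sigma_{w,z}$, by a connected subgroup of $\CT\times\CT$ (suitably twisted by $z$), together with a coordinate change. Together these show:
\begin{enumerate}
\item if ${}^z\phi|_{\CT(\CO_k)}\neq\phi|_{\CT(\CO_k)}$, then there is a connected subgroup $H$ acting on $\Sigma_{w,z}$ on which $\phi^{-1}\boxtimes{}^z\phi$ is nontrivial, so the isotypic part vanishes by the standard lemma that a connected group acts trivially on cohomology;
\item if ${}^z\phi|_{\CT(\CO_k)}=\phi|_{\CT(\CO_k)}$, then the stratum contracts, after removing affine-space fibers, to $T_r^F$-torsor data contributing exactly $1$.
\end{enumerate}
Condition $(\dag)$ is used precisely in the second case. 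The residual group $\CT_\phi$ acts on the fibers, and the triviality of $\phi$ on $\CT_\phi(\CO_k)$ ensures the contribution is $1$ rather than $0$.

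The main obstacle is the second step when $p$ is small, since the sets $\Phi^i$ in $(\ast)$ need not be closed under addition. In \cite{Nie24} the vanishing in case (1) uses a Howe factorization to split $\phi$ level by level and kills the relevant cohomology by an inductive fibration over the filtration. Without this, we would instead proceed as follows:
\begin{enumerate}
\item work with the subgroups generated by the root subgroups of $\Phi^i$ (their closures) at the appropriate depths;
\item show that commutators landing outside $\Phi^i$ occur only at strictly deeper levels, so they can be absorbed into affine fibrations;
\item argue inductively on $i$ that the character ${}^z\phi\phi^{-1}$ restricted to the relevant connected torus is detected on $\CT(\CO_k)$.
\end{enumerate}
Verifying that these commutator terms do not obstruct the affine-fibration argument, and that the group $\CT_\phi$ is exactly the obstruction, is the delicate point.
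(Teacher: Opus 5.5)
\textbf{There is a genuine gap in the vanishing step.} Three parts of your proposal agree with the paper:
\begin{itemize}
\item reducing $\dim I_w$ to the $\phi\boxtimes\phi^{-1}$-part of $H_c^*$ of a double-coset variety;
\item stratifying that variety by $z\in W_\bx z_w W_\bx$ through $G_{\bx,0}wG_{\bx,0}=\bigsqcup_z\CI z\CI$;
\item locating where $(\dag)$ enters, namely the final computation of $H_c^*(zT_r^F\CT_{\phi,r})$.
\end{itemize}

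The vanishing step, which is the core of the argument, does not work as you describe. The torus $\{(t_1,t_2): L(t_1)\dot z L(t_2)^{-1}=\dot z\}$ that drives the Deligne--Lusztig argument does not act on a whole stratum. A point of $\Sigma_z$ is $(x_1,x_2,v_1\dot z v_2^{-1})$ with $x_i\in\CI_\phi^\dag$ and $F(x_1v_1\dot z v_2^{-1}x_2^{-1})=v_1\dot z v_2^{-1}$. The torus acts only after the substitution $x_i\mapsto x_iv_i$, and that substitution is legitimate only where $v_1,v_2\in\CI_\phi^\dag$. Nonzero coordinates of $v_1,v_2$ at affine roots $f$ with $f<f^\dag=r_f-f$ obstruct this. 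The corresponding pieces must be shown to contribute nothing for \emph{every} $z$, including those with ${}^z\phi=\phi$. Your case (2), ``contracts after removing affine-space fibres'', presupposes exactly this.

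The paper stratifies $\Sigma_z$ further by the nonzero coordinate $(v_1)_{f_1}$ or $(v_2)_{f_2}$ with $f_i^\dag$ maximal. It kills each piece by a Lusztig-type deformation: an $\BA^1$-family of automorphisms $\Delta_\tau$, built for instance from $\delta=u_{f_1^\dag}\bigl((v_1)_{f_1}^{-1}(F^N(\tau)-\tau)\bigr)$ and $t_1=N_{\alpha_{f_1}^\vee,r_{f_1}}(\tau)$. This family is not a group action, but for $\tau\in\BF_{q^N}$ it agrees with the action of $N_{\alpha_{f_1}^\vee,r_{f_1}}(\tau)$, on which $\phi$ is nontrivial by the definition of $r_\alpha$. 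Keeping $\Delta_\tau$ inside the variety requires the case analysis of Proposition~\ref{red} and Lemma~\ref{gedag}: one deforms on the $f_1$- or the $f_2$-side, using $z^{\pm1}(f^\dag)>s_{z^{\pm1}(f)}$ or $r_{z^{\pm1}(f)}>r_f$. Only on the remaining piece $\Sigma_z^0$, via the affine fibration $\tSigma_z^0\to\Sigma_z^0$, does the torus argument apply.

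\textbf{Your small-$p$ plan does not supply the input that makes these deformations well defined.} You leave the induction over $\Phi^0\subsetneq\cdots\subsetneq\Phi^d$ with closures unverified. Moreover, it is not enough that offending commutator terms are ``strictly deeper''. A term $u_\gamma$ of $[u_\alpha(x),u_\beta(y)]$ with $r_\gamma>\max\{r_\alpha,r_\beta\}$ must land at an affine root $f''$ with $f''>(f'')^\dag$, that is, inside $\CI_\phi^\dag$; otherwise the deformation leaves the variety.

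The paper drops the filtration entirely and works with the single set $\CI_\phi^\dag=\CT_\phi\prod_{f>f^\dag}G^f$. It proves this is a group with the required commutator estimates (Proposition~\ref{I-group}, Lemma~\ref{conjugate}, Corollary~\ref{commutator}), using an arithmetic fact absent from your proposal. If $c\gamma^\vee=a\alpha^\vee+b\beta^\vee$ with $c=p$, then $p$ divides the structure constant $C_{\alpha,\beta,i,j}$ (Theorem~\ref{coefficient}). This gains depth at least $m$, where $p\in\varpi^m\CO_k^\times$, and Lemma~\ref{r-gamma} then gives $s_\gamma\le m$ or $r_\gamma-m\le\max\{r_\alpha,r_\beta\}$.

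Finally, ``following \cite{Nie24}'' is not available here. Its Mackey formula rests on Yu's condition (GE2), which fails for torsion primes.
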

We refer to Theorem \ref{intertwine} for the precise statement of the above theorem. The proof is motivated by work \cite{LN26} on the distinction problem of deep level Deligne-Lusztig representations.

As an immediate consequence, we extend the Mackey inner product formula of $\CR(\phi)$ to the general case of all primes $p$.
\begin{corollary} \label{intro-irr}
    Assume $(\dag)$ holds. We have  \[\dim \Hom_{\CG_{\bx, 0}(\CO_k)}(\CR(\phi), \CR(\phi)) = |\{z \in W_\bx^F; {}^z \phi |_{\CT(\CO_k)} = \phi|_{\CT(\CO_k)}\}|.\] In particular,
    $\pm \CR(\phi)$ is irreducible if and only if the stabilizer of $\phi|_{\CT(\CO_k)}$ in $W_\bx^F$ is trivial.
\end{corollary}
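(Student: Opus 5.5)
The corollary should come directly from Theorem \ref{intro-Mackey} applied to the trivial double coset. Take $w = 1$, or any $w \in \CG_{\bx,0}(\CO_k)$; then $z_w = 1$ and $W_\bx z_w W_\bx = W_\bx$. The plan has three steps.

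First, I will identify the virtual intertwining space $I_w(\CR(\phi))$ for $w=1$ with $\Hom_{\CG_{\bx,0}(\CO_k)}(\CR(\phi),\CR(\phi))$. By definition, $I_w$ is the virtual space of $\CG_{\bx,0}(\CO_k)\cap {}^w\CG_{\bx,0}(\CO_k)$-homomorphisms from $\CR(\phi)$ to ${}^w\CR(\phi)$. When $w$ normalizes $\CG_{\bx,0}(\CO_k)$ trivially (that is, $w=1$), the intersection is the whole group and ${}^w\CR(\phi)=\CR(\phi)$. I will check this against the precise formulation in Theorem \ref{intertwine}. The dimension of a virtual Hom space is understood as the alternating sum $\sum_{i,j}(-1)^{i+j}\dim\Hom(H^i_c,H^j_c)_\phi$, which equals the character inner product $\langle \CR(\phi),\CR(\phi)\rangle$. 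Substituting into the theorem then gives the displayed formula
\[\langle \CR(\phi),\CR(\phi)\rangle = |\{z\in W_\bx^F;\ {}^z\phi|_{\CT(\CO_k)}=\phi|_{\CT(\CO_k)}\}|.\]

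Second, for the irreducibility statement I will work in the Grothendieck group of finite-length (in fact finite-dimensional, since the action factors through a finite quotient $\CG_{\bx,0}(\CO_k)\to \CG_{\bx,0}(\CO_k/\varpi^{r})$) representations. Write $\CR(\phi)=\sum_\rho m_\rho \rho$ with $m_\rho\in\BZ$ and $\rho$ irreducible. Orthogonality of irreducible characters for the finite group gives $\langle\CR(\phi),\CR(\phi)\rangle=\sum m_\rho^2$. This sum equals $1$ exactly when $\CR(\phi)=\pm\rho$ for a single irreducible $\rho$. The stabilizer count is always at least $1$, since $z=1$ fixes $\phi$. Hence $\pm\CR(\phi)$ is irreducible if and only if the stabilizer of $\phi|_{\CT(\CO_k)}$ in $W_\bx^F$ is trivial.

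The main potential obstacle is bookkeeping rather than substance. I need to confirm that the normalization in Theorem \ref{intertwine} at $w=1$ indeed yields the full $\CG_{\bx,0}(\CO_k)$-Hom space, with no extra central twist from $Z(k)$. Restricting to $\CG_{\bx,0}(\CO_k)$ removes that twist, which is why the statement only involves $\phi|_{\CT(\CO_k)}$. I also need the character of $\CR(\phi)$ to factor through a finite level, so that the orthogonality argument applies.
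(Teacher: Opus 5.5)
Your proposal is correct and follows the paper's route: specialize the intertwining formula of Theorem \ref{intertwine} to $w=1$, so that $z_w=1$, the double coset is $W_\bx$, and $I_1$ is the full $\Hom$ space. The irreducibility criterion then follows from $\langle \CR(\phi),\CR(\phi)\rangle=\sum_\rho m_\rho^2$ together with the fact that the stabilizer count is at least $1$, exactly as the paper intends.
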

The original restrictive proof of the Mackey formula in \cite{Nie24} relies on an algebraic intertwining result using the genericity condition (GE2) of \cite{Yu}, which fails for torsion primes $p$. The approach in this paper is purely geometric and works for all $p$.

Now we state the main results on the construction of supercuspidal representations of $G(k)$.
\begin{theorem} [Corollary \ref{direct-sum}] \label{intro-sum}
    Assume that $T$ is elliptic. If $\pm \CR(\phi)$ is a genuine representation, then \[\pi(\phi) := \pm \text{c-} \ind_{Z(k) \CG_{\bx, 0}(\CO_k)}^{G(k)} \CR(\phi)\] is a direct sum of irreducible supercuspidals.
\end{theorem}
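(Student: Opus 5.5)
The plan is the standard compact-induction criterion: if $K$ is open and compact modulo the center $Z(k)$, and $\rho$ is a finite-dimensional smooth representation of $K$, then $\text{c-}\ind_K^{G(k)}\rho$ is admissible and a finite direct sum of irreducible supercuspidals provided the intertwining condition holds:
\[\Hom_{K \cap {}^wK}({}^w\rho, \rho) = 0 \quad \text{for all } w \in G(k) \setminus K.\]
We apply this with $K = Z(k)\CG_{\bx,0}(\CO_k)$ and $\rho = \pm\CR(\phi)$, extended to $Z(k)$ via $\phi|_{Z(k)}$. Note that $K$ is compact modulo center because $T$ is elliptic, so $\bx$ is a facet whose stabilizer is compact modulo center.

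By Mackey theory, $\text{End}_{G(k)}(\text{c-}\ind_K^{G(k)} \rho)$ decomposes as a sum over double cosets $KwK$ of the intertwining spaces $I_w(\rho)$. Since $\rho$ is a genuine representation, the virtual dimensions computed in Theorem \ref{intertwine} (Theorem \ref{intro-Mackey}) are honest dimensions. So the first step is to show that for $w \notin K$ the set
\[\{z \in (W_\bx z_w W_\bx)^F; {}^z\phi|_{\CT(\CO_k)} = \phi|_{\CT(\CO_k)}\}\]
is empty.

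For this we use ellipticity. Suppose some $z \in (W_\bx z_w W_\bx)^F$ fixes $\phi|_{\CT(\CO_k)}$. Then $z$ is an $F$-fixed element of the Iwahori–Weyl group $N_T(\breve k)/\CT(\CO_{\breve k})$. Because $T$ is elliptic and unramified, $T(\breve k)^F/\CT(\CO_{\breve k})^F = X_*(T)_F^{F}$ is finite, and $\bx$ is the unique $F$-stable point of its apartment. Hence $z$ fixes $\bx$, so $z \in W_\bx$. This puts $z_w$ in $W_\bx$, which forces $w \in K$ (elements of $N_T$ fixing $\bx$ lie in $\CG_{\bx,0}$ up to the center and the finite stabilizer part). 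The delicate point is the non-connected stabilizer: $\text{Stab}(\bx)$ may be larger than $Z(k)\CG_{\bx,0}(\CO_k)$. To handle it, I would read the $W_\bx$ in Theorem \ref{intertwine} as encoding the parahoric, and treat the elements of $\text{Stab}(\bx)\setminus K$ separately. They still give double cosets with $z_w \notin W_\bx$. The $F$-fixed argument then has to show that no $F$-fixed element of $W_\bx z_w W_\bx$ exists at all, by showing that any such element projects to an $F$-fixed element of $\Omega_\bx$ fixing $\phi$. I expect this to be the main obstacle: matching the paper's precise normalization of $z_w$ and of the Frobenius on the extended affine Weyl group.

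Once vanishing for $w \notin K$ is established, the criterion gives that $\text{c-}\ind$ is admissible with compact-mod-center matrix coefficients, so every subquotient is supercuspidal. Since $\rho$ decomposes into finitely many irreducible $K$-representations, each irreducible summand $\rho_i$ also satisfies the vanishing: its intertwining space embeds in that of $\rho$ for $w \notin K$. Therefore each $\text{c-}\ind\,\rho_i$ is irreducible supercuspidal, and $\pi(\phi)$ is their direct sum.
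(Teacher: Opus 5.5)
Your key step fails as stated. You want to show that for every $w \notin K = Z(k)\CG_{\bx,0}(\CO_k)$ the set $\{z \in (W_\bx z_w W_\bx)^F;\ {}^z\phi|_{\CT(\CO_k)} = \phi|_{\CT(\CO_k)}\}$ is empty. This is false under the hypotheses of the statement.

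Ellipticity does not make $\bx$ the unique $F$-fixed point of the apartment; it makes it unique only modulo $X_*(Z)\otimes\BR$. So the correct conclusion from $F(z)=z$ is $z \in \tW_{[\bx]}^F$, exactly as in the proof of Corollary \ref{irr-sup}, and not $z \in W_\bx$. The group $\tW_{[\bx]}^F$ can contain elements with nontrivial image in $W$ that do not come from $W_\bx$, and nothing prevents $\phi|_{\CT(\CO_k)}$ from being fixed by them. Excluding this is precisely the extra hypothesis of Corollary \ref{irr-sup}, which the present statement drops.

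Concretely, take $G = D^\times$ with $D$ a central division algebra of degree $n \ge 2$, $T = E^\times$ with $E/k$ unramified of degree $n$, and $\phi = \chi\circ N_{E/k}$ of depth zero. Then $\CG_{\bx,0}(\CO_k) = \CO_D^\times$ corresponds to an Iwahori subgroup over $\brk$, so $W_\bx = 1$; moreover $(\dag)$ holds and $\CR(\phi)$ is a character of $K = k^\times\CO_D^\times$. A uniformizer $\Pi$ of $D$ lies outside $K$ and normalizes $K$. It acts on $\CO_D^\times/(1+\Pi\CO_D) \cong \BF_{q^n}^\times$ by a power of Frobenius, so it fixes $\CR(\phi)$. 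Hence $I_\Pi(\CR(\phi)) \neq 0$, and $\pi(\phi)$ is a sum of $n$ characters. This is consistent with the theorem but contradicts your claimed vanishing.

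So the obstacle you flag in your third paragraph is not a normalization issue: the claim you are trying to prove there is false. Your last paragraph, asserting that each $\text{c-}\ind\,\rho_i$ is irreducible, inherits the error. Independently, Theorem \ref{intertwine} is proved only under $(\dag)$, which the statement does not assume.

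The missing idea is to aim only for admissibility, and to get it from a cuspidality property of $\CR(\phi)$ rather than from intertwining. By Bushnell's theorem, an admissible $\text{c-}\ind$ of a finite-dimensional representation of an open compact-mod-center subgroup is already a direct sum of irreducible supercuspidals. For a normal open compact $H \subseteq G_{\bx,0}^F$ one has $\pi^H = \bigoplus_w ((\pm\CR(\phi))^{K\cap wHw\i})^{\oplus m_w}$, with $w$ running over $K\backslash G(k)/G_{\bx,0}^F$. Proposition \ref{finite-parabolic} (Cartan decomposition) gives, for all but finitely many $w$, a proper $k$-rational parabolic $P = MV$ with $V^F\cap G_{\bx,0} \subseteq wHw\i$. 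Those summands then vanish by Theorem \ref{V-vanish}: $\CR_{T_r}^{G_r}(\phi)^{V_r^F} = 0$ for elliptic $T$.

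That vanishing is proved geometrically:
\begin{itemize}
\item the invariants equal $H_c^*(\Xi,\ov\BQ_\ell)[\phi\i]$, and $\Xi$ is stratified along $G_r = \bigsqcup_z \CI_{z,r} z P_r$;
\item the strata $\Xi_z^f$ are killed by the $\BA^1$-deformation trick of Lemma \ref{deform};
\item $\Xi_z^0$ carries a $\BG_m$-action, built from $\l$ and $\mu = z(\l)$, that commutes with $T_r^F$ and has no fixed points; this is where ellipticity enters.
\end{itemize}
This route needs neither $(\dag)$ nor any stabilizer condition on $\phi$.

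If you want to stay with intertwining, the most your argument (corrected) yields is that $I_w(\rho)\neq 0$ only for $w$ in the stabilizer of $[\bx]$ in $G(k)$, that is, for finitely many $K$-double cosets. Turning this into a direct-sum decomposition needs an additional criterion: finite intertwining of each irreducible constituent implies a finite sum of supercuspidals. That is more than the irreducibility criterion you quote, and it would still require $(\dag)$.
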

Analogous partial results for $R(\phi)$ are obtained by \cite{CI23} in the case of inner forms of $\GL_n$ and by \cite{INT24} in the Coxeter case.

The following result produces an explicit construction of a large class of irreducible supercuspidals.
\begin{theorem} [Corollary \ref{irr-sup}] \label{intro-irr-sup}
    Assume $(\dag)$ holds and $T$ is elliptic. If the stabilizer of $\phi|_{\CT(\CO_k)}$ in $\pr(\tW_{[\bx]}^F)$ is trivial, then \[\text{$\pi(\phi)$ is an irreducible supercuspidal.}\] Here $[\bx]$ is the natural image of $\bx$ in the reduced building, $\tW_{[\bx]}$ is the stabilizer of $[\bx]$ in the Iwahori-Weyl group and $\pr(\tW_{[\bx]})$ denotes its natural image in the Weyl group.
\end{theorem}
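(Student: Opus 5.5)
The plan is to apply the standard irreducibility criterion for compactly induced representations. Let $K = Z(k)\CG_{\bx,0}(\CO_k)$. Since $T$ is elliptic, $K$ is open and compact modulo the center. For a smooth irreducible representation $\rho$ of $K$, the compact induction c-$\ind_K^{G(k)} \rho$ is irreducible and supercuspidal provided that the intertwining set $\{g \in G(k) : \Hom_{K \cap {}^gK}(\rho, {}^g\rho) \neq 0\}$ equals $K$. Equivalently, $I_g(\rho) = 0$ for all $g \notin K$. So I must show two things: $\pm\CR(\phi)$ is an irreducible genuine representation, and it has no intertwining outside $K$.

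First, irreducibility. Trivial stabilizer of $\phi|_{\CT(\CO_k)}$ in $\pr(\tW_{[\bx]}^F)$ implies trivial stabilizer in $W_\bx^F$, because $W_\bx^F$ sits inside $\tW_{[\bx]}^F$ and projects injectively to the finite Weyl group. Corollary \ref{intro-irr} then gives $\dim\Hom(\CR(\phi),\CR(\phi)) = 1$, so $\pm\CR(\phi)$ is irreducible. Since the endomorphism pairing is positive definite on genuine representations, a virtual character of norm one is $\pm$ an irreducible representation. Choosing the sign makes $\pm\CR(\phi)$ genuine.

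Second, intertwining. By the Cartan-type decomposition, $G(k) = \bigsqcup K \dot z K$ as $z$ runs over $W_\bx \backslash \tW / W_\bx$ (modulo the center). So it suffices to treat $w = \dot z_w$. By Theorem \ref{intro-Mackey}, $\dim I_w(\CR(\phi))$ counts those $z \in (W_\bx z_w W_\bx)^F$ with ${}^z\phi|_{\CT(\CO_k)} = \phi|_{\CT(\CO_k)}$. For an irreducible genuine $\rho$, the virtual intertwining dimension equals the genuine one, up to the sign, which squares away. It therefore remains to show that any such $z$ lies in $\tW_{[\bx]}$, and hence that $w \in K$ up to center.

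This is the main obstacle, and I would attack it as follows. Suppose ${}^z\phi|_{\CT(\CO_k)} = \phi|_{\CT(\CO_k)}$ with $z$ $F$-stable. Write $z = t^\lambda \pr(z)$. Because $T$ is elliptic, $F$ acts on $X_*(T)$ without nonzero fixed vectors modulo $X_*(Z)$. I would use $F$-stability of $z$, together with the fact that $z$ lies in $W_\bx z_w W_\bx$, to show that $z$ fixes the $F$-fixed point $[\bx]$ of the reduced apartment. The argument: $z$ is $F$-stable and normalizes the $F$-fixed data. The unique $F$-fixed point of the reduced apartment of an elliptic torus is $[\bx]$, so $z[\bx]$ is also $F$-fixed and hence equals $[\bx]$.

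Then $z \in \tW_{[\bx]}^F$. The trivial-stabilizer hypothesis on $\pr(z)$, combined with $t^\lambda$ acting trivially on $\phi|_{\CT(\CO_k)}$, forces $\pr(z) = 1$. Thus $z$ is a central translation and $z \in W_\bx$ up to center, so $w \in K$.

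One subtle point is that the action of $z$ on $\phi|_{\CT(\CO_k)}$ factors through $\pr(z)$. This should be clear from the conjugation action on $\CT(\CO_k)$. Combining both parts with the criterion gives that $\pi(\phi)$ is irreducible supercuspidal.
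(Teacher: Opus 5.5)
Your proposal is correct and follows essentially the paper's route. You derive irreducibility of $\pm\CR(\phi)$ from Corollary \ref{intro-irr}, which the paper's Corollary \ref{irr-sup} takes as a hypothesis and you correctly obtain since $W_\bx^F\subseteq\tW_{[\bx]}^F$ injects into $W$; then Theorem \ref{intro-Mackey} and ellipticity force any intertwining $z$ into $\tW_{[\bx]}^F$ with $\pr(z)=1$, giving $w\in Z(k)\CG_{\bx,0}(\CO_k)$, exactly as in the paper.

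One small correction: your reduction to $w=\dot z_w$ via a decomposition of $G(k)$ indexed by $W_\bx\backslash\tW/W_\bx$ is inaccurate, since that is the decomposition over $\brk$ and $\dot z_w$ need not be $F$-rational. It is also unnecessary, because Theorem \ref{intro-Mackey} applies to every $w\in G(k)$, and your argument then directly places $w$ in $G_{\bx,0}\lambda(\varpi)G_{\bx,0}$ with $\lambda\in X_*(Z)^F$.
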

The representations $\pi(\phi)$ in Theorem \ref{intro-sum} and Theorem \ref{intro-irr-sup} seem to be the geometric extensions of the construction by G\'{e}rardin \cite{Ger75a, Ger75b}. When $p$ is large, $\pi(\phi)$ is regular supercuspidal in the sense of Kaletha \cite{Kal}.

\medskip

The paper is organized as follows. In \S \ref{sec:pre} we recall basic notions on parahoric subgroups and root systems. In \S \ref{sec:filtration} and \S \ref{sec:deep-level-DL-rep}, we extend the definition of deep level Deligne-Lusztig representations to the general case of all primes $p$. In \S \ref{sec:Sigma} and \S\ref{sec:dim-inter} we prove Theorem \ref{intro-Mackey} and Theorem \ref{intro-irr-sup}. In \S \ref{sec:sum} we prove Theorem \ref{intro-sum} by following a strategy in \cite{CI23}.

\subsection*{Acknowledgement}
The second author would like to thank Alexander Ivanov for helpful comments and discussions.

\section{Preliminary} \label{sec:pre}
Let $k$ be a local non-Archimedean field with residue field $\BF_q$ of characteristic $p$. We write $\brk$ for the completion of a maximal unramified extension of $k$, and $F$ for the Frobenius automorphism of $\brk$ over $k$. We denote by $\CO_k$ and $\COk$ the integer rings of $k$ and $\brk$ respectively. Let $\varpi$ be a fixed uniformizer of $k$. We also fix a prime $\ell\neq p$.

\subsection{Positive loops}
Let $\Perf_{\ov\BF_q}$ denote the category of perfect $\ov\BF_q$-schemes. Let $\fkX$ be an $\COk$-scheme. The positive loop functor (also called jet scheme) of $\fkX$ is defined by
\[
L^+ \fkX \colon \Perf_{\ov\BF_q}^{\rm op} \to {\rm Sets}, \quad R \mapsto \fkX(W_{\COk}(R)),
\]
where $W_{\COk}(R) = R[[\varpi]]$ if ${\rm char}\, \brk > 0$ and $W_{\COk}(R) = \COk \otimes_{\breve \BZ_p} W(R)$ otherwise. Here $W(R)$ denotes the $p$-typical Witt-vectors of $R$. In this paper, we only consider schemes $\fkX$ which are affine and of finite type over $\COk$, in which case $L^+\fkX$ is representable by an affine perfect scheme over $\ov\BF_q$.

\subsection{Moy-Prasad quotients}
Let $G$ be a connected reductive group defined over $k$. Let $\bx$ be a point in the Bruhat-Tits building $\CB(G, k)$ of $G$ over $k$. Let $\CT$ be the corresponding integral model for $T$. Bruhat--Tits theory associates to each $0 \le r \in \widetilde \BR := \BR \sqcup \{s+; s \in \BR\}$ a Moy-Prasad $\CO_k$-group model of $G$. For any $0 \le s \le r \in \widetilde \BR$, $L^+(\CG_{\bx,r+} \times_{\CO_k} \CO_{\breve k}) \subseteq L^+(\CG_{\bx,s} \times_{\CO_k} \CO_{\breve k})$ is a subgroup, and we put
\[G_{s:r} = L^+(\CG_{\bx,s} \times_{\CO_k} \CO_{\breve k}) / L^+(\CG_{\bx,r+} \times_{\CO_k} \CO_{\breve k}),\]
which is a pfp perfectly smooth affine group scheme over $\ov\BF_q$. As the Frobenius automorphism $F$ fixes $\bx$, it acts naturally on $G_{s:r}$ and hence endows it with an $\BF_q$-rational structure. In particular, $G_{s:r}(\BF_q) = \CG_{\bx,s}(\CO_k)/\CG_{\bx,r+}(\CO_k)$.

Let $H \subseteq G$ be a closed subgroup. Following \cite[\S2.5]{CI21} one may construct an $\ov\BF_q$-rational closed subgroup $H_{s:r} \subseteq G_{s:r}$ in a similar way. We put $H_r = H_{0:r}$ for simplicity. If, moreover, $H$ is a $k$-rational subgroup, then $H_{s:r}$ is defined over $\BF_q$, and we still denote by $F$ the induced Frobenius automorphisms on $H$ or $H_{s:r}$. We put $H = H(\brk)$ and $H_{s:r} = H_{s:r}(\ov\BF_q)$ for simplicity.

\subsection{The maximal torus}
Let $T \subseteq G$ be a $k$-rational and $\brk$-split maximal torus such that $\bx \in \CA(T, \brk) \cap \CB(G, k)$. Here $\CA(T, \brk)$ denotes the apartment of $T$ over $\brk$. By fixing a special point $o \in \CA(T, \brk)$ fixed by the Frobenius of the unique quasi-split inner form of $G$, we identify $\CA(T, \brk)$ with the underlying affine space of the real vector space $X_*(T) \otimes \BR$. By this identification, the action of $F$ on $\CA(T, \brk)$ induces an affine transformation of $X_*(T) \otimes \BR$, which we still denote by $F$. 

Let $N_T$ be the normalizer of $T$ in $G$. We have the finite Weyl group $W = N_T(\brk) / T(\brk)$ and the Iwahori-Weyl group $\tW = N_T(\brk) / T(\COk)$. As the Frobenius map $F$ preserves $T$, it acts on $W$ and $\tW$ in a natural way. There is a natural action of $\tW \rtimes \<F\>$ on $X_*(T) \otimes \BR$ by affine transformations.

Let $\Phi = \Phi(G_\brk, T_\brk)$ denote the root system of $T_\brk$ in $G_\brk$. For $\a \in \Phi$ we write $G^\a \subseteq G$ and $\a^\vee: \BG_m \to T_\brk$ for the corresponding root subgroup and coroot respectively. We fix a normalized Chevalley system for $(G, T)$ compatible with the point $o$, which is a collection of isomorphisms $u_\a : \BG_a \to G^\a$ for $\a \in \Phi$ subject to certain properties, see \cite{BT2} or \cite{Ad}. 

Let $\tPhi = \tPhi_\re \sqcup\, \BZ$ be the set of affine roots, where $\tPhi_\re = \Phi \times \BZ$ denotes the set of real affine roots. Let $f \in \tPhi$. We write $\a_f \in \Phi \sqcup \{0\}$, $n_f \in \BZ$ such that $f = (\a_f,n_f) = \a_f + n_f$. We view $f$ as an affine function on $X_*(T) \otimes \BR$ given by $f(v) = \a_f(v) + n_f$. Then the action of $\tW \rtimes \<F\>$ on $X_*(T) \otimes \BR$ induces an action on $\tPhi$ by $x(f)(v) = f(x\i(v))$.

We fix a basis $(\l_i)_{i=1}^l$ of the cocharacter group $X_*(T)$ of $T$. If $f \in \BZ_{\ge 1}$, we put $\CG^f =\CT(1 + \varpi^f\CO_\brk)$ and define \[u_f: \BA^l \to \CG^f, \quad (x_i)_i \mapsto \prod_i \l_i(1 + \varpi^{n_f} [x_i]),\] where $[x_i] = x_i$ in the equal characteristic case and $[x_i]$ is the Teichm\"{u}ller lift of $x_i \in \ov\BF_q$ in the mixed characteristic case. If $f \in \tPhi_\re$, we put $\CG^f = u_{\a_f}(\varpi^{n_f} \CO_\brk)$ and define \[u_f: \BA^1 \to \CG^f, \quad x \mapsto u_{\a_f}(\varpi^{n_f} [x]).\] We put $G^f = \Im (u_f)$ for $f \in \tPhi^{>0}$. The parahoric subgroup $G_{\bx,0} := \CG_{\bx, 0}(\COk)$ attached to $\bx$ is generated by $\CT(\COk)$ and $\CG^f$ for $f \in \tPhi_\re$ such that $f(\bx) \ge 0$. Moreover, we have the Bruhat decomposition \[G = G(\brk) = \sqcup_{z \in W_\bx \backslash \tW / W_\bx} G_{\bx, 0} z G_{\bx, 0},\] where $W_\bx \subseteq \tW$ is the Weyl group of $G_{\bx, 0}$.

\medskip

Let $B = T U \subseteq G$ be a Borel subgroup with unipotent radical $U$. We denote by $\Phi^+$ the set of (positive) roots which appear in $B$. Let $\Pi = \{\a_1, \dots, \a_d\}$ be the set of simple roots of $\Phi^+$. For $\nu \in \BZ\Phi$ we write $\nu \prec 0$ if $\nu = c_1 \a_1 + \cdots + c_d \a_d \in \BZ\Phi$ such that there exists $1 \le i \le d$ such that $c_1 = \cdots = c_{i-1} = 0$ and $c_i < 0$. This defines a linear order $\preceq$ on $\BZ\Phi$ by \[\l \prec \l' \Leftrightarrow (\l-\l') \prec 0.\] Let $\leq$ be a linear order on $(\BZ\Phi) \times \BR$ such that $\l + n < \l' + n'$ if and only if either $\l(\bx) + n < \l'(\bx) + n'$ or $\l(\bx) + n = \l'(\bx) + n'$ and $\l \prec \l'$. Let $\tPhi^{>0}$ be the set of affine roots $f > 0$, that is, either $f(\bx) > 0$ or $f(\bx) = 0$ and $\a_f \in \Phi^+$.
\begin{lemma}
    Let $f, f', f_1, f_1' \in (\BZ\Phi) \times \BR$. Then we have 
    
    (1) $f < f'$ if and only if $-f > -f'$;

    (2) $f + f_1 < f' + f_1'$ if $f < f'$ and $f_1 < f_1'$.   
\end{lemma}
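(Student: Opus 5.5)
The plan is to reduce both claims to the corresponding properties of the two ingredients of the order $\le$ on $(\BZ\Phi)\times\BR$: evaluation at $\bx$, which is additive and real-valued, and the lexicographic order $\preceq$ on $\BZ\Phi$, which is translation invariant. Write $f=\l+n$ and $f'=\l'+n'$. By definition, $f<f'$ holds exactly when either $f(\bx)<f'(\bx)$, or $f(\bx)=f'(\bx)$ and $\l\prec\l'$.

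First I will record the two properties of $\preceq$ that are needed. The set $P=\{\nu\in\BZ\Phi;\ \nu\prec0\}$ consists of the nonzero $\nu$ whose first nonzero coefficient in the basis $\Pi$ is negative. This set satisfies three conditions.
\begin{itemize}
\item Trichotomy: $\BZ\Phi=P\sqcup\{0\}\sqcup(-P)$.
\item Closure: $P+P\subseteq P$. For $\nu,\nu'\in P$, take the smaller of the two indices of first nonzero coefficients. At that index the coefficient of $\nu+\nu'$ is negative, either as a sum of two negatives or as one negative plus zero, and all earlier coefficients are zero.
\item Negation: $\l\prec\l'$ if and only if $-\l'\prec-\l$. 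This holds because both conditions mean $\l-\l'\in P$.
\end{itemize}

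For (1), the condition $f<f'$ means either $f(\bx)<f'(\bx)$, or equality holds and $\l-\l'\in P$. Negating, this is the same as either $(-f')(\bx)<(-f)(\bx)$, or equality holds and $(-f')-(-f)=\l-\l'\in P$, since $(-f)(\bx)=-f(\bx)$. That is exactly the statement $-f'<-f$, which is $-f>-f'$.

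For (2), I will split into cases.
\begin{itemize}
\item If at least one of the inequalities $f(\bx)<f'(\bx)$ or $f_1(\bx)<f_1'(\bx)$ is strict, then by additivity of evaluation at $\bx$ we get $(f+f_1)(\bx)<(f'+f_1')(\bx)$. Here we use that in every case $f(\bx)\le f'(\bx)$ and $f_1(\bx)\le f_1'(\bx)$.
\item Otherwise both evaluations are equal, and $\l-\l'$ and $\l_1-\l_1'$ both lie in $P$. Their sum $(\l+\l_1)-(\l'+\l_1')$ then lies in $P$ by closure, while $(f+f_1)(\bx)=(f'+f_1')(\bx)$.
\end{itemize}
In both cases $f+f_1<f'+f_1'$.

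There is no real obstacle. The only point needing care is the closure of $P$ under addition, which I handle by comparing the indices of the first nonzero coefficients.
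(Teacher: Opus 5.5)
Your proof is correct and is essentially the paper's approach: the paper's entire proof is ``It follows directly by definition,'' and your argument simply spells that out. You use additivity of evaluation at $\bx$, the negation property of $\prec$, and closure of $\{\nu\prec 0\}$ under addition, checked via the index of the first nonzero coefficient.
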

\begin{proof}
    It follows directly by definition. 
\end{proof}

\section{Filtration of the root system} \label{sec:filtration}
Let $\phi: T^F \to \ov\BQ_\ell^\times$ be a fixed smooth character of depth $r_\phi \in \BZ_{\ge 0}$. Let $r \ge r_\phi$. For $\a \in \Phi$ we define \[r_\a = \min\{i \in \BZ_{\ge 0}; \phi \circ N_F^{F^N} \circ \a^\vee(1 + \varpi^{i+1}\CO_\brk^{F^N}) = \{1\}\}.\] Here $N$ is any positive integer such that $F^N$ acts trivially on $X_*(T)$ and $N_F^{F^N}: T \to T$ is given by $x \mapsto x F(x) \cdots F^{N-1}(x)$. 

By definition, we have $r_{-\a} = r_\a = r_{F(\a)}$ for $\a \in \Phi$. Moreover, if $r_\a \ge 1$, then \[\phi \circ N_F^{F^N} \circ \a^\vee(1 + \varpi^{r_\a} \CO_\brk^{F^N}) = \phi \circ N_F^{F^N} \circ \a^\vee(1 + \varpi^{r_\a} [\BF_{q^N}]) \neq \{1\}.\] We set $s_\a = r_\a/2$. Put $r_f = 2 s_f  = r_{\a_f}$ if $f \in \tPhi_\re$ and $r_f = 2 s_f = -\infty$ if $f \in \BZ$.

\subsection{A result on $r_\a$} 
If ${\rm char} k = 0$ let $m \in \BZ_{\ge 1}$ such that $p \in \varpi^m \CO_k^\times$. Otherwise, we put $m = +\infty$.
\begin{lemma} \label{r-gamma}
    Let $\a, \b \in \Phi$ such that $\a + \b \neq 0$ and $\g = i \a + j \b \in \Phi$ for some $i, j \in \BZ_{\ge 1}$. Let $c \in \{1, 2, 3\}$ such that $c \g^\vee = a\a^\vee + b\b^\vee$ for some integers $a, b \in \BZ_{\ge 1}$ coprime to $c$. Then either $s_\g \le m$ or $r_\g - m \le \max\{r_\a, r_\b\}$.
\end{lemma}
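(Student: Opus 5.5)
The approach is to compare the three characters $\phi \circ N_F^{F^N} \circ \a^\vee$, $\phi \circ N_F^{F^N} \circ \b^\vee$ and $\phi \circ N_F^{F^N} \circ \g^\vee$ on the principal units of $\CO_\brk^{F^N}$, using the coroot relation $c\g^\vee = a\a^\vee + b\b^\vee$. Throughout, put $k_N = \brk^{F^N}$, the unramified extension of $k$ of degree $N$. It has the same uniformizer $\varpi$ and the same ramification index, hence the same $m$. If ${\rm char}\, k > 0$ then $m = +\infty$ and $s_\g \le m$ holds trivially. So assume ${\rm char}\, k = 0$, and put $n = \max\{r_\a, r_\b\}$.

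\emph{Step 1.} For $\d \in \Phi$ write $\psi_\d = \phi \circ N_F^{F^N} \circ \d^\vee$, a character of $k_N^\times$. Since $N_F^{F^N}$ is a homomorphism of the commutative group $T$, the relation $c\g^\vee = a\a^\vee + b\b^\vee$ gives
\[\psi_\g(x^c) = \psi_\a(x^a)\,\psi_\b(x^b) \qquad \text{for all } x \in k_N^\times.\]
Take $x \in 1 + \varpi^{n+1}\CO_{k_N}$. Then $x^a$ and $x^b$ lie in the same group, on which $\psi_\a$ and $\psi_\b$ are trivial by the definition of $r_\a$ and $r_\b$. Hence $\psi_\g$ is trivial on the subgroup $(1+\varpi^{n+1}\CO_{k_N})^c$ of $c$-th powers.

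\emph{Step 2.} Suppose first that $p \nmid c$. Then the $c$-th power map is a bijection of $1+\varpi^{n+1}\CO_{k_N}$, by Hensel's lemma or the binomial series. So $\psi_\g$ is trivial on $1+\varpi^{n+1}\CO_{k_N}$, which gives $r_\g \le n$ and in particular $r_\g - m \le n$. Since $c \in \{1,2,3\}$, the remaining case is $c = p \in \{2,3\}$.

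\emph{Step 3.} This is the main point: determining the image of the $p$-th power map on principal units. The standard fact, proved via the expansion $(1+y)^p = 1 + py + \cdots + y^p$ and successive approximation, is that for $j > m/(p-1)$ the map $x \mapsto x^p$ sends $1+\varpi^j\CO_{k_N}$ onto $1+\varpi^{j+m}\CO_{k_N}$. I expect to have to check this fact carefully at the boundary case $p=2$; the rest is bookkeeping. There are two sub-cases.

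If $n+1 > m/(p-1)$, then $(1+\varpi^{n+1}\CO_{k_N})^p = 1+\varpi^{n+1+m}\CO_{k_N}$. By Step 1, $\psi_\g$ is trivial there, so $r_\g \le n+m$, that is, $r_\g - m \le \max\{r_\a, r_\b\}$.

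Otherwise $n+1 \le m/(p-1)$. Choose $t$ with $t - m > m/(p-1)$; for $p=2$ take $t = 2m+1$, and for $p=3$ any $t \ge 2m+1$ also works. Then $t - m \ge n+1$, so
\[1+\varpi^t\CO_{k_N} = (1+\varpi^{t-m}\CO_{k_N})^p \subseteq (1+\varpi^{n+1}\CO_{k_N})^p.\]
Hence $\psi_\g$ is trivial on $1+\varpi^{2m+1}\CO_{k_N}$, so $r_\g \le 2m$ and $s_\g \le m$.

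In every case one of the two alternatives holds, which proves the lemma. The hypothesis that $a$ and $b$ are coprime to $c$ is not needed in this argument.
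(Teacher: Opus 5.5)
Your proof is correct. Step 2 is exactly the paper's treatment of the case $c \neq p$, and for $c = p$ you use the same binomial estimate as the paper, but in the opposite logical direction.

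The paper argues contrapositively. It assumes $s_\g > m$, so that $r_\g - m > s_\g$. It then uses only the single congruence $(1+\varpi^{r_\g-m}[x])^p \equiv 1+\varpi^{r_\g}[\e x] \pmod{\varpi^{r_\g+1}}$, where $p \in \varpi^m[\e] + \varpi^{m+1}\CO_{\brk}$. This is combined with the nontriviality of $\psi_\g$ on $1+\varpi^{r_\g}[\BF_{q^N}]$, which comes from the minimality of $r_\g$. The coroot relation then carries this nontriviality over to $\psi_\a$ or $\psi_\b$ at level $r_\g - m$.

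You instead push triviality forward, from $\psi_\a,\psi_\b$ to $\psi_\g$. This needs a stronger input: that $x \mapsto x^p$ maps $1+\varpi^j\CO_{k_N}$ onto $1+\varpi^{j+m}\CO_{k_N}$ for $j > m/(p-1)$, which is a successive-approximation argument using completeness. In return you get the explicit bound $r_\g \le \max\{n+m, 2m\}$. You also only ever evaluate characters on genuine subgroups, which avoids the paper's somewhat loose manipulation of non-subgroup sets such as $1+\varpi^{n+1}[\BF_{q^N}]$.

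Your remark that coprimality of $a,b$ with $c$ plays no role here is also right; the paper's proof of this lemma does not use it either.
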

\begin{proof}
    Let $n = \max\{r_\a, r_\b\} \in \BZ_{\ge 0}$. First assume that $c \neq p$. Then we have \begin{align*} &\quad\ \phi \circ N_F^{F^N} \circ \g^\vee(1 + \varpi^{n+1} \CO_\brk^{F^N}) \\ &= \phi \circ N_F^{F^N} \circ (c\g^\vee)(1 + \varpi^{n+1} [\BF_{q^N}]) \\ &\subseteq \phi \circ N_F^{F^N} \circ (a\a^\vee + b\b^\vee)(1 + \varpi^{n+1} [\BF_{q^N}])  = \{1\}.\end{align*} This implies that $r_\g \le n$ as desired. 
    
    Now we assume $c = p$ and $s_\g > m$. Let $\e \in \BF_q^\times$ such that $p - \varpi^m[\e] \in \varpi^{m+1} \COk$. For $x \in \ov\BF_q^\times$ we have \[(1 + \varpi^{r_\g -m }[x])^c = 1 + c \varpi^{r_\g - m} [x] + \sum_{i=2}^c \binom{c}{i} \varpi^{i(r_\g-m)} [x^i] \in 1 + \varpi^{r_\g}[\e x] + \varpi^{r_\g+1}\COk,\] where the inclusion follows from the fact that $i(r_\g-m) > i(r_\g- s_\g) = i s_\g \ge r_\g$ for $i \ge 2$. Therefore, we have \begin{align*} &\quad\ \phi \circ N_F^{F^N} \circ (a\a^\vee + b\b^\vee)(1 + \varpi^{r_\g - m} [\BF_{q^N}]) \\ &=\phi \circ N_F^{F^N} \circ (c\g^\vee)(1 + \varpi^{r_\g - m} [\BF_{q^N}]) \\ &= \phi \circ N_F^{F^N} \circ \g^\vee((1 + \varpi^{r_\g - m} [\BF_{q^N}])^c) \\ &= \phi \circ N_F^{F^N} \circ \g^\vee(1 + \varpi^{r_\g}[\BF_{q^N}]) \\ &\neq \{1\}. \end{align*} Hence $\phi \circ N_F^{F^N} \circ \a^\vee(1 + \varpi^{r_\g-m}[\BF_{q^N}]) \neq \{1\}$ or $\phi \circ N_F^{F^N} \circ \b^\vee(1 + \varpi^{r_\g-m}[\BF_{q^N}]) \neq \{1\}$, which means that  either $r_\a \ge r_\g - m$ or $r_\b \ge r_\g - m$ and the proof is finished.   
\end{proof}

\begin{theorem} \label{coefficient}
    Let $\a, \b \in \Phi$ with $\a + \b \neq 0$. For $x, y \in \COk$ we have \[ [u_\a(x), u_\b(y)] = \prod_{i, j \in \BZ_{\ge 1},\ i\a+j\b \in \Phi} u_{i\a+j\b}(C_{\a, \b, i, j} x^i y^j),\] where the coefficients $C_{\a, \b, i, j} \in \BZ$ are integers. Moreover, we have $c | C_{\a, \b, i, j}$, where $\a$, $\b$, $i$, $j$ and $c$ are as in Lemma \ref{r-gamma}.
\end{theorem}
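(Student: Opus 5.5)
The first assertion, the Chevalley commutator formula with integer structure constants, is classical. Because the Chevalley system is normalized, I would quote it from \cite{BT2} (or from Steinberg's lectures on Chevalley groups) after reducing to the split group over $\COk$. The plan for the divisibility statement is to use the explicit list of structure constants in rank two. Whenever $i\a + j\b \in \Phi$ with $i, j \ge 1$, the roots $\a, \b$ span a rank two subsystem $\Psi = \Phi \cap (\BZ\a + \BZ\b)$ of type $A_1 \times A_1$, $A_2$, $B_2$ or $G_2$. The commutator $[u_\a(x), u_\b(y)]$ lives in the subgroup generated by the root subgroups of $\Psi$, so it suffices to treat these four rank two cases. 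The type $A_1\times A_1$ case is vacuous, since then no $i\a+j\b$ is a root.

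In each remaining case I would write down the standard formulas, for instance from Steinberg or Carter. Up to signs, these give
\[C_{\a,\b,1,1} = \pm(r+1),\quad C_{\a,\b,2,1} = \pm\tfrac{(r+1)(r+2)}{2},\quad C_{\a,\b,1,2} = \pm\tfrac{(r+1)(r+2)}{2},\]
\[C_{\a,\b,3,1} = \pm\tfrac{(r+1)(r+2)(r+3)}{6},\quad C_{\a,\b,3,2} = \pm\tfrac{(r+1)(r+2)(r+3)}{3},\ \text{etc.},\]
where $r$ is the largest integer with $\b - r\a \in \Phi$. The rule then becomes concrete: $C_{\a,\b,i,j}$ is a product of small integers recording the lengths of the root strings involved.

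Next I would identify $c$. Writing $\g = i\a + j\b$, we have $\g^\vee = \frac{2\g}{(\g,\g)}$, so
\[c\g^\vee = a\a^\vee + b\b^\vee \quad\text{with}\quad a = c\,i\,\frac{(\a,\a)}{(\g,\g)},\quad b = c\,j\,\frac{(\b,\b)}{(\g,\g)}.\]
The minimal $c \in \{1,2,3\}$ making $a, b$ integers is determined by the ratios of squared lengths, which take the values $1$, $2$ or $3$. The statement asks only for some $c$ with $a, b$ coprime to $c$. So in each case I would compute $a$ and $b$ for the $c$ at hand and check that $c \mid C_{\a,\b,i,j}$ whenever $\gcd(a,c) = \gcd(b,c) = 1$. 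For example, in $B_2$ with $\a$ long, $\b$ short and $\g = \a + \b$ short, we get $\g^\vee = \a^\vee + 2\b^\vee$. Here $c = 1$ works trivially, while $c = 2$ fails the coprimality condition because $b = 4$, so nothing needs checking. If instead $\a, \b$ are both short and $\g = \a + \b$ is long, then $2\g^\vee = \a^\vee + \b^\vee$, so $c = 2$, and indeed $C = \pm 2$. This is the pattern I expect throughout: $c > 1$ with coprime $a, b$ forces $\g$ to be longer than $\a$ and $\b$ (or the appropriate multiple), and that is exactly when the structure constant picks up the factor $r+1$ or its analogue.

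The main obstacle is the $G_2$ case. There the coefficients such as $\pm 2$ and $\pm 3$ occur for $(i,j) = (2,1), (3,1), (3,2)$, and both $c = 2$ and $c = 3$ can appear. I would organize this check by reducing to a basis: up to swapping $\a$ and $\b$ and applying Weyl group symmetry, which preserves both sides of the condition, the pair $(\a,\b)$ is one of finitely many configurations. I would then tabulate $(i, j, C, a, b)$ for each, using the known commutator relations in $G_2$ with the $\pm 1, \pm 2, \pm 3$ constants. When $c = 1$ the divisibility is vacuous, so only the entries with $c \in \{2, 3\}$ and coprime $a, b$ need to be checked.
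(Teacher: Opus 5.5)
Your proposal is correct and is essentially the paper's argument. The paper simply invokes the integral commutator relations for a Chevalley system \cite[XXIII, Proposition 6.4]{SGA3} and leaves the divisibility to inspection of the rank-two constants, which is the case check you propose to carry out. One refinement shortens your table: the only entries where some $c>1$ has $a,b$ coprime to $c$ are those with $\alpha,\beta$ short and $\gamma$ long. There one gets $c=2$ and $C_{\alpha,\beta,1,1}=\pm 2$ in type $B_2$, and $c=3$ and $C_{\alpha,\beta,i,j}=\pm 3$ in type $G_2$, so $c=2$ never occurs in $G_2$ and the $(3,1)$, $(3,2)$ entries you flag there only have $c=1$.
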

\begin{proof}
    It follows directly from \cite[XXIII Proposition 6.4]{SGA3}.
\end{proof}

\begin{corollary} \label{comm}
    Let $f, f' \in \tPhi_\re^{>0}$ with $\a_f + \a_{f'} \neq 0$. Then $[\CG^f, \CG^{f'}]$ is contained in the product of subgroups $\CG^{f''}$ with $f'' = if + jf' + m_{f,f',i,j} \in \tPhi_\re^{>0}$ with $i, j \in \BZ_{\ge 1}$ and $m_{f,f',i,j} \in \BZ_{\ge 0}$ such that either $s_{f''} \le m_{f,f',i,j}$ or $r_{f''} - m_{f,f',i,j} \le \max\{r_f, r_{f'}\}$.
\end{corollary}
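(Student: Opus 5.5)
Write $f = \a + n$ and $f' = \b + n'$ with $\a = \a_f$, $\b = \a_{f'}$. The plan is to apply Theorem \ref{coefficient} to generators, read off the valuations of the resulting coefficients, and then use Lemma \ref{r-gamma} to control the depth parameter.

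\textbf{Step 1 (commutators of generators).} For $x \in \varpi^n\COk$ and $y \in \varpi^{n'}\COk$, Theorem \ref{coefficient} gives
\[ [u_\a(x), u_\b(y)] = \prod_{i,j\ge1,\ \g=i\a+j\b\in\Phi} u_\g(C_{\a,\b,i,j}x^iy^j). \]

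\textbf{Step 2 (choice of $m_{f,f',i,j}$).} Fix a pair $(i,j)$ with $\g = i\a + j\b \in \Phi$. Let $c \in \{1,2,3\}$ be as in Lemma \ref{r-gamma}; it is determined by the relation $c\g^\vee = a\a^\vee + b\b^\vee$, which holds by rank-two root system considerations. Define
\[ m_{f,f',i,j} = \begin{cases} m & \text{if } c = p, \\ 0 & \text{otherwise}. \end{cases} \]
This is finite: $c = p$ forces $p \le 3$, hence ${\rm char}\, k = 0$ whenever $p$ can divide a coefficient. When $c = p$ we have $c \mid C_{\a,\b,i,j}$ by Theorem \ref{coefficient}, and since $p \in \varpi^m\CO_k^\times$, the coefficient $C_{\a,\b,i,j}x^iy^j$ lies in $\varpi^{in + jn' + m}\COk$. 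When $c \ne p$ it still lies in $\varpi^{in+jn'}\COk$. In both cases the factor lies in $\CG^{f''}$ with
\[ f'' = if + jf' + m_{f,f',i,j}. \]

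\textbf{Step 3 (positivity).} We need $f'' \in \tPhi^{>0}$. We have $f''(\bx) = if(\bx) + jf'(\bx) + m_{f,f',i,j} \ge 0$. If $f''(\bx) = 0$, then $f(\bx) = f'(\bx) = 0$ and $m_{f,f',i,j} = 0$, so $\a, \b \in \Phi^+$, and therefore $\g = i\a + j\b \in \Phi^+$.

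\textbf{Step 4 (the depth dichotomy).} When $c = p$, Lemma \ref{r-gamma} gives either $s_\g \le m$ or $r_\g - m \le \max\{r_\a, r_\b\}$. Since $r_{f''} = r_\g$, $r_f = r_\a$ and $r_{f'} = r_\b$, this is exactly the claimed condition with $m_{f,f',i,j} = m$. When $c \ne p$, the first case of the proof of Lemma \ref{r-gamma} gives directly $r_\g \le \max\{r_\a, r_\b\}$, which is the second alternative with $m_{f,f',i,j} = 0$.

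\textbf{Step 5 (from generators to groups).} It remains to pass from commutators of generators to the full group $[\CG^f, \CG^{f'}]$. Here $\CG^f$ and $\CG^{f'}$ are one-parameter groups, so the commutator subgroup is generated by the elements $[u_\a(x), u_\b(y)]$ together with their conjugates. The product of the $\CG^{f''}$, taken over the affine roots $f''$ described above, is closed under further commutators with $\CG^f$ and $\CG^{f'}$, by the same estimate applied inductively and using the ordering Lemma (2) to handle higher terms. This closure is the main bookkeeping obstacle: one has to check that iterated commutators again produce roots of the required form, with exponents $i, j \ge 1$ and $m_{f,f',i,j} \ge 0$, and that each such root satisfies the dichotomy for the correct pair of roots.
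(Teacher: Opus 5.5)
Your Steps 1--4 are the paper's proof. The only difference is the choice of shift. The paper takes $m_{f,f',i,j}$ to be the exact valuation of $C_{\a,\b,i,j}$ in $k$, which is $\ge m$ when $c=p$. For $c\neq p$, the first case of the proof of Lemma \ref{r-gamma} gives $r_\g\le\max\{r_\a,r_\b\}$, so any shift $\ge 0$ works. You instead take $m$ or $0$ according to whether $c=p$. The statement only asks for some $m_{f,f',i,j}\in\BZ_{\ge 0}$, so both choices are fine.

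There is, however, one false step in Step 2: $c=p$ does not force ${\rm char}\,k=0$. For $k=\BF_2((t))$ or $\BF_3((t))$ the case $c=p$ does occur, $m=+\infty$, and your $m_{f,f',i,j}$ is then not an integer. The repair is the one the paper makes. If ${\rm char}\,k=p$ and $p\mid C_{\a,\b,i,j}$, then $C_{\a,\b,i,j}=0$ in $k$. So the factor $u_\g(C_{\a,\b,i,j}x^iy^j)$ is trivial and $\g$ simply drops out of the product. The paper's choice of the exact valuation builds this in automatically.

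Step 5 is unnecessary. In the paper, $[\CG^f,\CG^{f'}]$ means the set of commutators $[g,g']$ with $g\in\CG^f$ and $g'\in\CG^{f'}$. This is all that Lemma \ref{conjugate} and Proposition \ref{I-group} use. There the same notation is applied to $G^f=\Im(u_f)$, which is not even a subgroup in mixed characteristic. Your Step 1 already writes each such commutator as an ordered product of elements of the groups $\CG^{f''}$.

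So Steps 1--4, with the equal-characteristic fix above, are a complete proof. Step 5 as written is only an unverified sketch. It would be a genuine gap only if the commutator-subgroup reading were intended, which is not how the corollary is used.
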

\begin{proof}
    Let $i, j \in \BZ_{\ge 1}$ such that $if +jf' \in \tPhi$. Let $m_{f, f', i, j} \in \BZ_{\ge 0}$ such that $C_{\a_f, \a_{f'},i,j} \in \varpi^{m_{f, f', i, j}} \CO_k^\times$ if it exists. Otherwise $C_{\a_f, \a_{f'},i,j}=0$ in $k$ and in this case there is no $u_{i\a_f+j\a_{f'}}$ term in the commutator. Then the statement follows directly from Theorem \ref{coefficient} and Lemma \ref{r-gamma}. 
\end{proof}

\subsection{A result on commutators} \label{subsec:commutator}
For $f \in \tPhi$ we define $f^\dag = r_f - f$. Note that for $f \in \BZ$ we have $f^\dag = -\infty$ and hence $f > f^\dag$.
\begin{lemma}
    For $f \in \tPhi_\re$ we have \[f > f^\dag \Leftrightarrow f > s_f \Leftrightarrow \text{either $f(\bx) > s_f$ or $f(\bx) = s_f$ and $\a_f > 0$}.\]    
\end{lemma}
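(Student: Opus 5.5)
We prove the two equivalences directly from the definition of the linear order $\leq$ on $(\BZ\Phi) \times \BR$, using the additivity properties in the preceding lemma.

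For the first equivalence, write $f = \a_f + n_f$ and $f^\dag = r_f - f = -\a_f + (r_f - n_f)$. Here $r_f$ is viewed as the element $0 + r_f$ of $(\BZ\Phi)\times\BR$, and similarly $s_f = 0 + s_f$. Since $r_f = 2 s_f$, we have $f - f^\dag = 2(f - s_f)$ as elements of $(\BZ\Phi) \times \BR$. It therefore suffices to check that for any $g \in (\BZ\Phi)\times \BR$ we have $g > 0 \Leftrightarrow 2g > 0$, and that $f > f^\dag \Leftrightarrow f - f^\dag > 0$. The latter is where the order must be translation invariant, i.e. $h < h' \Leftrightarrow h + g < h' + g$. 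This follows from the definition: the comparison uses $\l(\bx) + n$ versus $\l'(\bx) + n'$, both affine in the data, and the order $\prec$ on $\BZ\Phi$ is translation invariant by construction, since it is defined via the sign of $\l - \l'$. Also $2g > 0$ iff $g > 0$, because $(2g)(\bx) = 2 g(\bx)$, and the leading nonzero simple-root coefficient of $2\l$ has the same sign as that of $\l$. Alternatively, one can cite part (2) of the preceding lemma with $f_1 = f$, $f_1' = f'$ to get one direction, and obtain the converse from linearity of the order together with part (1).

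For the second equivalence, $f > s_f$ means, by definition of $\leq$ with $\l = \a_f$, $\l' = 0$, $n = n_f$, $n' = s_f$, that either $\a_f(\bx) + n_f > s_f$, i.e. $f(\bx) > s_f$, or equality holds and $\a_f \succ 0$. Since $\a_f \in \Phi$ is a root, its simple-root coefficients are all of the same sign, so $\a_f \succ 0$ iff $\a_f \in \Phi^+$, i.e. $\a_f > 0$.

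The only point requiring any care is the translation invariance and scaling compatibility of $\leq$ in the first step. This is a routine unwinding of the definition, so no real obstacle is expected.
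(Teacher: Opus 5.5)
Your proof is correct and is essentially the paper's argument: the paper only says the lemma follows directly from the definition of $\le$. Your reduction $f - f^\dag = 2(f - s_f)$, combined with translation and scaling invariance of $\le$, is a valid repackaging of the direct comparison of $f(\bx)$ with $r_f - f(\bx)$ and of $\a_f$ with $-\a_f$ under $\preceq$, and your observation that a root has simple-root coefficients all of one sign (so $\a_f \succ 0$ iff $\a_f \in \Phi^+$) is exactly what the last equivalence needs.
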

\begin{proof}
    It follows directly by definition.
\end{proof}

Let $\CT_\phi \subseteq \CT(1+\varpi\COk)$ be the subgroup generated by $\CT(1 + \varpi^{r_\phi+1} \COk)$ and $\a^\vee(1 + \varpi^{r_\a + 1} \CO_\brk)$ for $\a \in \Phi$. Note that $\CT_\phi$ is $F$-stable.
\begin{lemma} \label{conjugate}
    Let $f, f' \in \tPhi^{>0}$ such that $\max\{f, f'\} > f^\dag, (f')^\dag$. Then $[\CG^f, \CG^{f'}]$ lies in a product of subgroups $\CT_\phi$ and $\CG^{f''}$ with $f+f' \le f'' \in \tPhi_\re^{>0}$ such that $(f'') > (f'')^\dag$.
\end{lemma}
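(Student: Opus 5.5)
We split into cases according to whether $f$, $f'$ are real affine roots and whether $\a_f + \a_{f'} = 0$. By symmetry assume $f \ge f'$, so the hypothesis reads $f > f^\dag$ and $f > (f')^\dag$.

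\emph{Case 1: $f, f' \in \tPhi_\re$ with $\a_f + \a_{f'} \neq 0$.} By Corollary \ref{comm}, $[\CG^f, \CG^{f'}]$ lies in the product of the $\CG^{f''}$ with $f'' = if + jf' + m$, where $i,j \ge 1$, $m = m_{f,f',i,j} \ge 0$, and either $s_{f''} \le m$ or $r_{f''} - m \le \max\{r_f, r_{f'}\}$. Since $f, f' > 0$, $i, j \ge 1$ and $m \ge 0$, the lemma on the order gives $f'' \ge f + f'$. It remains to show $f'' > (f'')^\dag$, i.e. $2f'' > r_{f''}$.
\begin{itemize}
\item If $s_{f''} \le m$, then $2f'' = 2(if + jf') + 2m > 2m \ge r_{f''}$, using $if + jf' > 0$.
\item Otherwise $r_{f''} \le m + \max\{r_f, r_{f'}\}$, and we use $f > f^\dag$, $f > (f')^\dag$, i.e. $2f > r_f$ and $f + f' > r_{f'}$. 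If $r_f \ge r_{f'}$, then $2f'' \ge 2f + 2f' + 2m > r_f + m \ge r_{f''}$. If $r_{f'} > r_f$, then $2f'' \ge 2(f+f') + 2m > r_{f'} + m \ge r_{f''}$, because $f + f' > r_{f'}$.
\end{itemize}
In both subcases we use part (2) of the order lemma, together with the fact that adding a positive element preserves strict inequalities. One must check that these comparisons are compatible with the linear order on $(\BZ\Phi)\times\BR$ when constants are compared with elements having nonzero root part; this goes through because ties at $\bx$ are resolved by $\prec$ and the positive parts are strictly positive.

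\emph{Case 2: one of $f, f'$ lies in $\BZ$.} Say $f' \in \BZ_{\ge1}$, so $\CG^{f'} \subseteq \CT(1+\varpi^{f'}\COk)$. For $f \in \tPhi_\re$ the commutator $[t, u_{\a}(x)] = u_\a((\a(t)-1)x)$ lands in $\CG^{f+f'}$. We need $f + f' > (f+f')^\dag = r_f - f - f'$, i.e. $2(f+f') > r_f$. If $f \ge f'$ this follows from $2f > r_f$. If $f' > f$, then $f' > f^\dag$ by the hypothesis (here $(f')^\dag = -\infty$), so $f + f' > r_f$ and hence $2(f+f') > r_f$. If both lie in $\BZ$, the commutator is trivial.

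\emph{Case 3: $f, f' \in \tPhi_\re$ with $\a_{f'} = -\a_f =: -\a$.} This is the main obstacle, since the commutator has a torus part. We use the rank-one computation in $\langle G^\a, G^{-\a}\rangle$: for $x \in \varpi^{n_f}\COk$ and $y \in \varpi^{n_{f'}}\COk$, the commutator $[u_\a(x), u_{-\a}(y)]$ equals a product of $\a^\vee(1 + xy\cdot\text{unit})$ and root elements $u_{\pm\a}(\cdots)$ of valuations corresponding to $f + (f+f')$ and $f' + (f+f')$. This follows from the $\mathrm{SL}_2$ identity $\begin{pmatrix}1&x\\0&1\end{pmatrix}\begin{pmatrix}1&0\\y&1\end{pmatrix}\cdots$ via the Chevalley system.

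The torus factor lies in $\a^\vee(1 + \varpi^{n}\COk)$ with $n$ the integer $f + f' = n_f + n_{f'}$. The hypothesis gives $f + f' > \min$-type bounds: $f > (f')^\dag = r_\a - f'$ yields $n_f + n_{f'} > r_\a$, i.e. $n \ge r_\a + 1$. Hence the torus factor lies in $\CT_\phi$. The root factors $f''$ are $\ge f + f'$, and $f'' > (f'')^\dag$ holds by the same estimate as in Case 2.
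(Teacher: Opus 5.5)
Your proof is correct and follows essentially the paper's route. You use the same three cases: the torus case; the case $\a_f+\a_{f'}\neq 0$, handled via Corollary \ref{comm}, where your bound $2f''>r_{f''}$ is just the paper's chain $(f'')^\dag<\max\{f^\dag,(f')^\dag\}<f+f'\le f''$ rewritten; and the case $\a_{f'}=-\a_f$, handled by the rank-one $\mathrm{SL}_2$ computation. Your normalization $f\ge f'$ actually streamlines the last case, since $f>(f')^\dag$ gives $n_f+n_{f'}\ge r_\a+1$ at once and makes the paper's second branch (the one with $f\le (f')^\dag$) unnecessary.
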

\begin{proof}
    If $f, f' \in \BZ_{\ge 1}$, the statement is trivial. If $f \in \BZ$ and $f' \in \tPhi_\re$, then $[\CG^f, \CG^{f'}] \subseteq \CG^{f + f'}$, and the statement follows from the fact that \[(f + f')^\dag = r_{f'} - f' - f < r_{f'} - f' = (f')^\dag < \max\{f, f'\} < f + f'.\] By symmetry, it remains to consider the case that $f, f' \in \tPhi_\re$.
    
    Suppose $f + f' \in \BZ$. Then \[[\CG^f, \CG^{f'}] \subseteq \CG^{2f+f'} \a_{f'}^\vee(1+\varpi^{f+f'}\COk) \CG^{2f'+f}.\] If $f > (f')^\dag = r_{f'} - f'$, then $f + f' \ge r_{f'}+1$. Otherwise, we have $f' > (f')^\dag$ by assumption and hence $f'(\bx) \ge r_{f'}/2 = s_{f'}$. Similarly, we have $f(\bx) \ge s_f = s_{f'}$. Moreover, since one of $\a_f$ and $\a_{f'}$ is a negative root, it follows that $(f+f')(\bx) > 2 s_{f'} = r_{f'}$ as desired. Hence in either case we have $f + f' \ge r_{f'}+1$. Moreover, \[(2f+f')^\dag, (f+2f')^\dag < r_{f'} - (r_{f'} +1) = -1 < f + 2f', 2f + f'.\] Hence the statement follows.
    
    Suppose $\a_f + \a_{f'} \neq 0$. By Corollary \ref{comm}, \[[\CG^f, \CG^{f'}] \subseteq \prod_{f''= if + jf' + m_{f,f',i,j} \in \tPhi_\re} \CG^{f''},\] where  $i, j \in \BZ_{\ge 1}$ and $m_{f,f',i,j} \in \BZ_{\ge 0}$ such that either $s_{f''} \le m_{f,f',i,j}$ or $r_{f''} - m_{f,f',i,j} \le \max\{r_f, r_{f'}\}$. In the former case, we have \[(f'')^\dag = r_{f''} - f'' < r_{f''} - m_{f,f',i,j} \le s_{f''} \le m_{f,f',i,j} < f''\] as desired. In the latter case, we have \[(f'')^\dag \le r_{f''} - m_{f,f',i,j} - f - f' \le \max\{r_f, r_{f'}\} - f -f' < \max\{f^\dag, (f')^\dag\} < f + f' \le f''\] as desired.
\end{proof}

\section{The deep level Deligne-Lusztig representation} \label{sec:deep-level-DL-rep}
Let $\phi: T^F \to \ov\BQ_\ell^\times$ be a fixed smooth character of depth $r_\phi$. We keep the notation in \S \ref{sec:filtration} defined with respect to $\phi$.

\subsection{The group $\CI_\phi^\dag$} 
Recall that $G^f = \Im (u_f)$ for $f \in \tPhi^{>0}$ and $\CT_\phi$ is generated by $\CT(1 + \varpi^{r_\phi+1} \COk)$ and $\a^\vee(1+\varpi^{r_\a+1}\COk)$ for $\a \in \Phi$. We define \[ \CI_\phi^\dag = \CT_\phi \prod_{f \in \tPhi_\re^{>0}, \ f > f^\dag} G^f,\] where the product is taken with respect to the linear order $\le$ on $\tPhi$.

\begin{proposition} \label{I-group}
The set $\CI_\phi^\dag$ is a subgroup of $G_{\bx, 0}$.
\end{proposition}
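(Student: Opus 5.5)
The plan is to show that the subgroup $H$ of $G_{\bx,0}$ generated by $\CT_\phi$ and the $\CG^f$ for $f \in \tPhi_\re^{>0}$ with $f > f^\dag$ is equal to the ordered product $\CI_\phi^\dag$. The inclusion $\CI_\phi^\dag \subseteq H$ is clear, so the work is in the reverse inclusion.

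Throughout, I work in a truncation $G_{\bx,0}/G_{\bx,R+}$ for $R \gg r$, or equivalently argue by induction on depth. This makes only finitely many affine roots relevant: every $f$ with $f(\bx) > R$ gives $\CG^f \subseteq G_{\bx, R+}$. Set $\Psi = \{f \in \tPhi_\re^{>0} ; f > f^\dag\}$. I will also use that the subgroups $\CG^f$ with $f(\bx) \ge$ some $s$, for $f \in \tPhi^{>0}$ with integer $f$ allowed, form the Moy--Prasad filtration groups, which are normal in $G_{\bx,0}$.

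\textbf{Step 1: $\CT_\phi$ normalizes each $\CG^f$, $f \in \Psi$.} Conjugating $u_f(x)$ by $t \in \CT(\COk)$ gives $u_f(\a_f(t)x)$, so $\CG^f$ is $\CT(\COk)$-stable. Note that $G^f = \Im(u_f)$ is only the Teichm\"uller image, while $\CG^f = u_{\a_f}(\varpi^{n_f}\COk)$. So I should read the product as $\prod_f \CG^f$, or show that $\prod_{f\in\Psi} G^f$ already gives all of $\prod_{f \in \Psi}\CG^f$. The latter holds because $f \in \Psi$ implies $f + n \in \Psi$ for $n \ge 0$: indeed $(f+n)^\dag = f^\dag - n$, and $f + n > f > f^\dag > f^\dag - n$.

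\textbf{Step 2: ordered product closure.} I list $\Psi$ (truncated) increasingly as $f_1 < f_2 < \cdots < f_N$. I then show $P = \CT_\phi \CG^{f_1}\cdots\CG^{f_N}$ is closed under multiplication by the standard rearrangement argument. Given a product of elements in arbitrary order, I move factors into position one adjacent swap at a time. Swapping $u \in \CG^{f}$ and $u' \in \CG^{f'}$ with $f, f' \in \Psi$ costs a commutator. By Lemma~\ref{conjugate} (its hypothesis holds since $f > f^\dag$ and $f' > (f')^\dag$), this commutator lies in a product of $\CT_\phi$ and $\CG^{f''}$ with $f''\in\Psi$ and $f'' \ge f + f' > \max\{f, f'\}$, using $f, f' > 0$ together with part~(2) of the lemma on the order. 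Any $\CT_\phi$ factor is pushed to the left using Step 1. I run an induction downward on the minimal affine root appearing, i.e.\ on depth: new terms are strictly larger, so in the truncation the process terminates.

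The key bookkeeping point is the case $f + f' \in \BZ$, i.e.\ $\a_f = -\a_{f'}$. Lemma~\ref{conjugate} then produces $\a^\vee(1 + \varpi^{f+f'}\COk)$ with $f + f' \ge r_{\a}+1$, which lies in $\CT_\phi$, so the output is still of the allowed form. Inverses are handled similarly: $P^{-1} = \CG^{f_N}\cdots\CG^{f_1}\CT_\phi$, which the same reordering brings back into $P$. Hence $P = H$ is a group.

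\textbf{Main obstacle.} The delicate part is termination and well-definedness in the limit. Each commutator can spawn several new factors, and factors indexed by the same $f''$ must be merged, which is fine since $\CG^{f''}$ is a group. One must check that the induction on the number of ``out of order'' pairs, weighted by depth, genuinely decreases. I would order by the multiset of $f$-values, noting that new $f''$ exceed both swapped roots. Then pass from the truncations to $G_{\bx,0}$ using that each truncated product is a closed subgroup and that $\CI_\phi^\dag \supseteq G_{\bx,R+}\cap$ (the relevant part) for $R$ large, since deep roots automatically satisfy $f > f^\dag$ and $\CT(1+\varpi^{r_\phi+1}\COk)\subseteq \CT_\phi$.
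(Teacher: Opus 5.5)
Your argument is essentially the paper's. The paper proceeds by descending induction on the smallest affine root $f$, using Lemma \ref{conjugate} (together with $[G^f,\CT_\phi]\subseteq\CG^{f+1}$) to show that $G^f$ normalizes $\CI_\phi^{\dag,\ge h}$, with the deep range $f(\bx)>r_\phi$ as base case (your truncation at $R\ge r_\phi$); your collection process is an unrolled form of that induction.

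One point to tidy is termination. Organize it as in your Step 2, collecting the smallest root first. Do not use the multiset of $f$-values as the measure: a swap only permutes the two indices and adds larger ones, so that multiset does not decrease.
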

\begin{proof}
    Let $f \in \tPhi_\re^{>0}$ such that $f > f^\dag$. We define \[\CI_\phi^{\dag, \ge f} = \CT_\phi \prod_{f \le f' \in \tPhi_\re^{>0},\ f' > (f')^\dag} G^{f'}.\] It suffices to show $\CI_\phi^{\dag, \ge f}$ is a subgroup.

    We argue by induction on $f$. If $f(\bx) > r_\phi$, then the statement follows since $\CI_\phi^{\dag, \ge f}$ is the subgroup generated by $\CT_\phi$ and $G^{f'}$ for $f \le f' \in \tPhi^{>0}$. Assume $f(\bx) \le r$. If $\{f' \in \tPhi_\re; f' > f,\ f' > (f')^\dag\}=\emptyset$, then the statement follows directly from the fact that $G^f$ is normalized by $\CT_\phi$. Otherwise let $$h=\min\{f' \in \tPhi_\re; f' > f,\ f' > (f')^\dag\}.$$ By the induction hypothesis, $\CI_\phi^{\dag, \ge h}$ is a subgroup. To show $\CI_\phi^{\dag, \ge f}$ is a subgroup, it suffices to show that $\CI_\phi^{\dag, \ge h}$ is normalized by $G^f$. Note that $[G^f, \CT_\phi] \subseteq \CG^{f+1} \subseteq \CI_\phi^{\dag, \ge h}$. It suffices to show  $[G^f, G^{f'}] \subseteq \CI_\phi^{\dag, \ge h}$ for all $f < f' \in \tPhi_\re$ such that $f' > (f')^\dag$. This follows from Lemma \ref{conjugate}, and the statement follows.    
\end{proof}

\begin{corollary} \label{commutator}
    Let $f_i \in \tPhi^{>0}$ be positive affine roots for $1 \le i \le n$. Suppose \[0 < h = \max\{f_i^\dag; f_i \in \tPhi_\re, f_i < f_i^\dag, 1 \le i \le n\} \text{ exists}.\] Let $h \le f \in \tPhi^{>0}$ such that $f > f^\dag$. 
    
    Let $g' = \prod_{i=1}^n u_{f_i}(y_i)$ and $g = u_f(x)$, where $y_i \in \ov\BF_q$ (resp. $x \in \ov\BF_q$) if $f_i \in \tPhi_\re$ (resp. $f \in \tPhi_\re$) and $y_i \in \ov\BF_q^l$ (resp. $x \in \ov\BF_q^l$) otherwise. Then \begin{itemize}
        \item if $f > h$, then $[g, g'] \in \CI_\phi^\dag$;

        \item if $f = h$, then $[g, g'] \in \a_h^\vee(1 + \varpi^{r_h} [x \sum_{f_i^\dag = h} y_i]) \CI_\phi^\dag$.
    \end{itemize}
\end{corollary}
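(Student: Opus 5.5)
We would expand the commutator $[g, g']$ letter by letter and reduce it to commutators of a single root subgroup element with $g$. Using the identity \[[g, ab] = [g,a]\cdot a[g,b]a^{-1},\] we write $[g,g']$ as a product over $i$ of conjugates $a_{<i}[g, u_{f_i}(y_i)]a_{<i}^{-1}$, where $a_{<i} = \prod_{j<i} u_{f_j}(y_j)$. So we need two things. First, each $[g, u_{f_i}(y_i)]$ must be computed modulo $\CI_\phi^\dag$. Second, conjugation by the $a_{<i}$ must preserve the relevant cosets, which means the $a_{<i}$ must normalize $\CI_\phi^\dag$ and act trivially on the torus term modulo $\CI_\phi^\dag$.

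For the individual commutators we split into the cases $f_i > f_i^\dag$ and $f_i < f_i^\dag$. (Equality $f_i = f_i^\dag$ cannot occur for a real affine root, since $\a_{f_i} \neq 0$, and for $f_i \in \BZ$ we always have $f_i > f_i^\dag$.)

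If $f_i > f_i^\dag$, both $f$ and $f_i$ satisfy the hypothesis of Lemma \ref{conjugate}. Hence $[g, u_{f_i}(y_i)]$ lies in $\CT_\phi \prod G^{f''}$ with $f''>(f'')^\dag$, which is contained in $\CI_\phi^\dag$ by Proposition \ref{I-group}.

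If $f_i < f_i^\dag$, then $f \ge h \ge f_i^\dag$. Since $f_i^\dag > f_i$, we get $f > f_i$, so $\max\{f,f_i\} = f > f_i^\dag$, and also $f > f^\dag$. Lemma \ref{conjugate} again applies, but we must track the torus contributions more carefully.

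Suppose first that $\a_f + \a_{f_i} \neq 0$. The proof of Lemma \ref{conjugate} produces only root subgroups $G^{f''}$ with $f''>(f'')^\dag$, so the commutator lies in $\CI_\phi^\dag$.

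Suppose instead that $\a_f + \a_{f_i} = 0$, so $f + f_i \in \BZ$. If $f > h$, then $f > f_i^\dag = r_{f_i} - f_i$, which gives $f + f_i \ge r_{f_i}+1$. The torus part then lies in $\a^\vee(1+\varpi^{r_\a+1}\COk) \subseteq \CT_\phi$, and the outer terms $2f+f_i$, $f+2f_i$ are handled as in Lemma \ref{conjugate}.

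If $f = h = f_i^\dag$, then $f+f_i = r_h$ exactly. The $\mathrm{SL}_2$ (or $\mathrm{PGL}_2$) commutator formula gives $[u_f(x), u_{f_i}(y_i)] \equiv \a_h^\vee(1+\varpi^{r_h}[xy_i])$ modulo $\a_h^\vee(1+\varpi^{r_h+1}\COk)$ and the outer root subgroups. This step requires checking the sign convention, in particular that the coroot appearing is $\a_h^\vee$ rather than $-\a_h^\vee$. The remaining factors lie in $\CI_\phi^\dag$ by the argument of Lemma \ref{conjugate}. Note that $r_h \ge 1$ here, since $h>0$ is a real affine root with $h = f_i^\dag$.

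For the case $f_i^\dag = h$ but $f_i \neq -\a_h + \ldots$, i.e. $\a_{f_i} \neq -\a_f$: since $r_{f_i}$ depends only on the root, it contributes nothing beyond $\CI_\phi^\dag$. The statement's sum should therefore be read as running over those $i$ with $\a_{f_i} = -\a_f$ and $f_i^\dag = h$, i.e. $f_i = h^\dag$; this reading is part of what we need to confirm.

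The main obstacle is reassembling the factors. We must show that conjugating $\a_h^\vee(1+\varpi^{r_h}[z])\CI_\phi^\dag$ by the products $a_{<i}$ does not change the coset, and that the torus terms combine additively. Additivity holds because $(1+\varpi^{r_h}[a])(1+\varpi^{r_h}[b]) \equiv 1+\varpi^{r_h}[a+b] \bmod \varpi^{r_h+1}$, and $\a_h^\vee(1+\varpi^{r_h+1}\COk)\subseteq\CT_\phi$.

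For the conjugation we would proceed by induction on $n$. The commutator of $u_{f_j}(y_j)$ with $\a_h^\vee(1+\varpi^{r_h}[z])$ lies in $\CG^{f_j + r_h}$, and we would show that $f_j + r_h > (f_j+r_h)^\dag$. Indeed $(f_j + r_h)^\dag = r_{f_j} - f_j - r_h$, and $r_h \ge r_{f_j}$ is not automatic, so this is the place to use the maximality of $h$: it gives $r_{f_j} - f_j \le h < h + r_h$ whenever $f_j < f_j^\dag$. We would also need that $u_{f_j}(y_j)$ normalizes $\CI_\phi^\dag$, which may fail when $f_j<f_j^\dag$. To get around this, we would instead move the torus factors to one side and bound all commutators arising in this rearrangement using Lemma \ref{conjugate} and the bound $f \ge h$.
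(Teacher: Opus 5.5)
Your architecture is the paper's. You run an induction on $n$, compute the single commutators via Lemma \ref{conjugate}, do a separate rank-one computation when $f=h=f_i^\dag$ (where Lemma \ref{conjugate} does not apply), and add the $\a_h^\vee$-terms modulo $\a_h^\vee(1+\varpi^{r_h+1}\COk)\subseteq\CT_\phi$. Expanding via $[g,ab]=[g,a]\,a[g,b]a\i$, rather than first conjugating $g$ by $u_{f_n}(y_n)$ as the paper does, is only a cosmetic difference.

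However, the reassembly step, which you rightly call the main obstacle, is not carried out, and the information you keep from Lemma \ref{conjugate} is too weak to carry it out. You record only that $[g,u_{f_i}(y_i)]$ lies in $\CI_\phi^\dag$, with torus part somewhere in $\CT_\phi$. Conjugation by $u_{f_j}(y_j)$ with $f_j<f_j^\dag$ preserves neither set. For example, take $\mathrm{SL}_3$, $\bx$ hyperspecial, and $\phi$ with $r_{\a_1}=0$, $r_{\a_2}=5$. Then the commutator of $\a_1^\vee(1+\varpi c)\in\CT_\phi$ with $u_{\a_2}(y)$ has a nonzero $G^{\a_2+1}$-coordinate, although $\a_2+1<(\a_2+1)^\dag$. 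So ``moving the torus factors to one side'' does not by itself resolve anything.

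The missing point is to keep the full output of Lemma \ref{conjugate}.
\begin{itemize}
\item Apart from the special term, every factor of $[g,u_{f_i}(y_i)]$ lies in some $\CG^{f''}$ with $f''\ge f+f_i>f\ge h$ and $f''>(f'')^\dag$.
\item When $\a_f+\a_{f_i}=0$, the torus part lies at the integer level $f+f_i$. This level is $>h$, and it is $\ge r_{f_i}+1$, so the torus part lies in $\CT_\phi$.
\item The special term $\a_h^\vee(1+\varpi^{r_h}[xy_i])$ sits at level $r_h=h+f_i>h$.
\end{itemize}
Hence each such factor $u$ satisfies the hypothesis of the \emph{first} bullet relative to the $i-1$ factors of $a_{<i}$, whose maximal bad dagger is at most $h$. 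Strong induction on $n$ then gives $a_{<i}ua_{<i}\i\in u\,\CI_\phi^\dag$. If $a_{<i}$ has no factor with $f_j<f_j^\dag$, then $a_{<i}\in\CT(\COk)\CI_\phi^\dag$ and this is immediate. Since $\CT(\COk)$ normalizes $\CI_\phi^\dag$, the torus terms then collect on one side and give the claim. This is exactly what the paper's ``(a), (b) and the induction hypothesis'' encodes. In particular, the inequality that matters for conjugating the $\a_h^\vee$-term is $r_h>h$; your bound $f_j^\dag<h+r_h$ alone does not give it.

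Two smaller corrections:
\begin{itemize}
\item Your assertion $\max\{f,f_i\}=f>f_i^\dag$ in the case $f_i<f_i^\dag$ is false precisely when $f=h=f_i^\dag$. The subcase $f=h>f_i^\dag$ with $\a_f=-\a_{f_i}$ is covered by your $f>f_i^\dag$ argument.
\item The index set needs no confirmation. The condition $f_i^\dag=h$ is an equality in $(\BZ\Phi)\times\BR$, so it forces $\a_{f_i}=-\a_h$, i.e.\ $f_i=h^\dag$. Thus the case ``$f_i^\dag=h$ but $\a_{f_i}\neq-\a_f$'' is empty when $f=h$.
\end{itemize}
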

\begin{proof}
    We argue by induction on $n$. If $n = 0$, the statement is trivial. Assume $n \ge 1$. If $(f_n)^\dag \le h < f$, by Lemma \ref{conjugate} we have \[\tag{a} u_{f_n}(y_n) g u_{f_n}(y_n)\i \in g \CT_\phi \prod_{f + f_n \le f'' \in \tPhi_\re, f'' > (f'')^\dag} G^{f''}.\] If $(f_n)^\dag = f$, then \[\tag{b} u_{f_n}(y_n) g u_{f_n}(y_n)\i \in g \a_h^\vee(1 + \varpi^{r_h} [x y_n]) \CT_\phi \prod_{f + f_n \le f'' \in \tPhi_\re, f'' > (f'')^\dag} G^{f''}.\]

    The first statement follows from (a) and the induction hypothesis. The second one follows from (b), the first statement (noticing that $r_h > h$) and the induction hypothesis.
\end{proof}

%\subsection{} Let $\tPhi_\phi =\{f \in \tPhi_\re; r_f =0, f(\bx) = 0\}$. For $f = \a + n$ we denote by $\s_f$ the corresponding affine reflection on $\tPhi$ given by $f' \mapsto f'$ if $f' \in \BZ$ and $f' \mapsto \s_{\a_f}(\a_{f'}) + n_{f'} - n_f\a_{f'}(\a_f^\vee)$ otherwise. Here for $\a \in \tPhi$ we denote by $\s_\a$ the corresponding reflection on $X^*(T)$ given by $\l \mapsto \l - \l(\a^\vee)\a$.

%\begin{lemma} \label{affine-reflection}Let $f' \in \tPhi^{>0}$ and $f \in \tPhi_\phi$. Then $\s_f(f')(\bx) = \bx$, and moreover, $r_{\s_f(f')} = r_{f'}$ if $f \in \tPhi_\re$.  \end{lemma}
%\begin{proof}The first statement follows directly from that $f(\bx) = 0$. Assume $f' \in \tPhi_\re$. Then \[\a_{\s_f(f')}^\vee = \s_{\a_f}(\a_{f'})^\vee = \s_{\a_f}(\a_{f'}^\vee) = \a_{f'}^\vee + \a_f(\a_{f'}^\vee) \a_f^\vee \in \a_{f'}^\vee + \BZ \a_f^\vee.\] As $r_f = 0$, we have $r_{\s_f(f')} = r_{f'}$ as desired.  \end{proof}

%By Lemma \ref{affine-reflection}, $\tPhi_\phi$ is a root system and we denote by $W_\phi$ the corresponding Weyl group generated by $\s_f$ for $f \in \tPhi$. We define \[\CK_{\phi, r} = \cup_{w \in W_\phi} \CH_{\phi, r} T_r \dw \CH_{\phi, r},\] where $\dw \in G_r$ is some/any lift of $w$.

%\begin{proposition}The set $\CK_{\phi, r}$ is a subgroup of $G_r$.\end{proposition}

\subsection{Deep level Deligne-Lusztig representations} \label{subsec:DL-rep}
 Let $G_r^f$, $\CT_{\phi, r}$, $\CI_{\phi, r}^\dag$, $\CG^f_r$, ... be the natural images of $G^f$, $\CT_\phi$, $\CI_\phi^\dag$, $\CG^f$, ... under the natural projection $G_{\bx, 0} \to G_r$. Let $L: G_r \to G_r$ be the Lang map given by $g \mapsto g\i F(g)$.

Now we introduce the notion of deep level Deligne-Lusztig representations. First consider the variety \[X_r = X_{\phi, \bx, r} := L\i (F\CI_{\phi, r}^\dag),\] on which $G_r^F \times T_r^F$ acts by $(g, t): x \mapsto g x t$. Then the $\phi$-isotypic part of the cohomology $ H_c^i(X_r, \ov\BQ_\ell)[\phi]$ is a representation of $G_r^F$. The virtual representation \[\CR_{T_r}^{G_r}(\phi) = H_c^*(X_r, \ov\BQ_\ell)[\phi] := \sum_{i \in \BZ} (-1)^i H_c^i(X_r, \ov\BQ_\ell)[\phi]\] is called a deep level Deligne-Lusztig representation of $G_r^F$. By inflation, we also view $\CR_{T_r}^{G_r}(\phi)$ as a virtual smooth representation of $G_{\bx, 0}^F$.

\begin{lemma} \label{independent}
    The representation $\CR_{T_r}^{G_r}(\phi)$ of $G_{\bx, 0}^F$ is independent of the choice of $r$.
\end{lemma}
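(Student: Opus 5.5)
We compare level $r$ with level $r+1$ for $r \ge r_\phi$; the general case follows by induction. Let $\rho: G_{r+1} \to G_r$ be the natural projection, with kernel $K = G_{r+1:r+1}$. This is a connected unipotent (vector) group.

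\textbf{Step 1: the image of the group.} We first check that $\rho(\CI_{\phi, r+1}^\dag) = \CI_{\phi, r}^\dag$. We then identify the fibre $\CI_{\phi,r+1}^\dag \cap K$. Since $r_f \le r_\phi \le r$ for every real affine root $f$, any $f \in \tPhi^{>0}$ with $f(\bx) = r+1$ satisfies $f > s_f$, hence $f > f^\dag$. Moreover $\CT(1+\varpi^{r+1}\COk) \subseteq \CT_\phi$. So $\CI_\phi^\dag$ contains the entire $(r+1)$-st Moy--Prasad layer. It follows that $\CI_{\phi,r+1}^\dag = \rho^{-1}(\CI_{\phi,r}^\dag)$.

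\textbf{Step 2: the varieties are a bundle.} The Lang map is compatible with $\rho$. Since $K$ is normal and $F$-stable, Step 1 gives
\[X_{r+1} = L^{-1}(F\CI^\dag_{\phi,r+1}) = L^{-1}\bigl(F\rho^{-1}(\CI^\dag_{\phi,r})\bigr) = \rho^{-1}(X_r).\]
Hence $\rho: X_{r+1} \to X_r$ is surjective, and each fibre is a $K$-torsor under right multiplication. Concretely, it is isomorphic to $K \cong \BA^N$ (perfected), because $K$ is commutative unipotent and $X_{r+1}$ is stable under right translation by $K$. Therefore $H_c^i(X_{r+1}) \cong H_c^{i-2N}(X_r)(-N)$ as $G_{r+1}^F \times T_{r+1}^F$-modules. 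Here the action factors through $G_r^F \times T_r^F$ up to the action of $K$. The $K$-action, and in particular the action of $T_{r+1}^F \cap K$, is trivial on cohomology because $K$ is connected. The even shift means the alternating sums agree.

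\textbf{Step 3: matching isotypic parts.} The character $\phi$ on $T_{r+1}^F$ is inflated from $T_r^F$ because $r \ge r_\phi$. The $\phi$-isotypic parts therefore correspond under Step 2, and the inflations to $G_{\bx,0}^F$ coincide.

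The main obstacle is Step 2. One needs a genuine equivariant identification of the cohomology, not just a fibration with affine fibres. I would first try to argue that the right action of the connected group $K$ on $X_{r+1}$ is free with quotient $X_r$. This should follow from $X_{r+1} = \rho^{-1}(X_r)$ and the fact that $K$ acts on the fibres of $\rho$ simply transitively. The standard fact on cohomology of affine-space bundles (vector-group torsors) then applies. The equivariance under $G^F_{r+1}\times T^F_{r+1}$ is preserved, since these actions commute with the projection.
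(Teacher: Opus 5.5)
Your proposal is correct and is essentially the argument the paper imports from \cite[Proposition 3.6]{Nie24}: since $r_f\le r_\phi\le r$ and $\CT(1+\varpi^{r_\phi+1}\COk)\subseteq\CT_\phi$, the group $\CI_\phi^\dag$ contains $G_{\bx,r+}$, so $X_{r'}$ is the full preimage of $X_r$ and $X_{r'}\to X_r$ is an equivariant affine-space bundle, which leaves $H_c^*(-)[\phi]$ unchanged as a virtual $G_{\bx,0}^F$-module. There are two small slips that do not affect the argument. First, the kernel of $G_{r+1}\to G_r$ is $G_{r+:r+1}$, not $G_{r+1:r+1}$; for general $\bx$ it involves all affine roots with $r<f(\bx)\le r+1$, which your inequality $f(\bx)>s_f$ handles equally, and the same argument works directly for any $r\le r'$ without induction. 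Second, this kernel need not be commutative (e.g.\ for $r=0$), but you only use that it is connected unipotent, hence an affine space.
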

\begin{proof}
    The proof is the same as that of \cite[Proposition 3.6]{Nie24}.
\end{proof}

\subsection{The intertwining space}
Fix $w \in G^F$ and let $r'\gg r$ be a sufficiently large integer such that ${}^{w\i} G_{\bx, r'+} \subseteq G_{\bx, r+}$. We set \[P_{r', r} = \{(x, w\i x w) \in G_{r'} \times G_r; x \in G_{\bx, 0} \cap {}^w G_{\bx, 0}\},\] which is a subgroup of $G_{r'} \times G_r$.
\begin{lemma}
    The subgroup $P_{r' ,r}$ is connected.
\end{lemma}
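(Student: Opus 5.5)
The plan is to identify $P_{r',r}$ with the image of a connected group under a morphism, namely the perfect group scheme attached to the intersection $G_{\bx,0} \cap {}^w G_{\bx,0}$. The map
\[
G_{\bx,0} \cap {}^w G_{\bx,0} \to G_{r'} \times G_r, \qquad x \mapsto (x, w\i x w),
\]
is a homomorphism, and $P_{r',r}$ is its image. The condition ${}^{w\i} G_{\bx,r'+} \subseteq G_{\bx,r+}$ guarantees that this map factors through the image $H_{r'}$ of $G_{\bx,0} \cap {}^w G_{\bx,0}$ in $G_{r'}$. Indeed, if $x \in G_{\bx,r'+}$, then $w\i x w \in G_{\bx,r+}$, so its image in $G_r$ is trivial. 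Hence $P_{r',r}$ is the image of $H_{r'}$ under the morphism $x \mapsto (x, \overline{w\i x w})$, and it suffices to show that $H_{r'}$ is connected.

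To prove $H_{r'}$ is connected, I would use the structure of the intersection of two parahoric subgroups. First move to a common apartment: by the Bruhat decomposition, write $w \in G_{\bx,0} \dz G_{\bx,0}$ with $\dz \in N_T$ a lift of $z \in \tW$. Conjugating by the elements of $G_{\bx,0}$ (which act as automorphisms of $G_{\bx,0}$ and of $G_r$) reduces us to $w = \dz$, at the cost of adjusting $r'$ if necessary. Then $G_{\bx,0} \cap {}^{\dz} G_{\bx,0} = G_{\bx,0} \cap G_{z\bx,0}$ is the fixer of $\{\bx, z\bx\}$ in a single apartment. By Bruhat–Tits theory (the intersection of parahorics attached to points of one apartment), this group equals
\[
\CT(\COk) \prod_{f} \CG^f,
\]
where the product runs over $f \in \tPhi_\re$ with $f(\bx) \ge 0$ and $f(z\bx) \ge 0$. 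It is the $\COk$-points of a smooth connected group scheme, namely the schematic closure / concave-function group $\CG_\Omega$ for $\Omega = \{\bx, z\bx\}$. Its image in $G_{r'}$ is therefore the image of $L^+\CG_\Omega$, which is connected because it is the positive loop group of a smooth affine group with connected fibers. Concretely, the product decomposition gives a surjection from
\[
L^+\CT \times \prod_f \BA^1\text{-type factors}
\]
onto $H_{r'}$, and the source is connected since $\CT$ is a connected torus model.

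The main obstacle is the reduction $w \rightsquigarrow \dz$ and keeping track of truncations. After conjugating, $P_{r',r}$ for $w = g_1 \dz g_2$ is identified with $P_{r',r}$ for $\dz$ via the automorphism $(a,b) \mapsto (g_1 a g_1\i, g_2\i b g_2)$ of $G_{r'} \times G_r$. This automorphism is well defined because $g_1, g_2 \in G_{\bx,0}$ normalize $G_{\bx,r'+}$ and $G_{\bx,r+}$. Since it is an isomorphism of varieties, it preserves connectedness. The remaining point is to cite the product decomposition of $G_{\bx,0} \cap G_{z\bx,0}$, including connectedness of its torus part: this uses $\CT(\COk)$ rather than the full $T^0$, which is fine because $T$ is $\brk$-split. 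I would try the concave-function description of \cite{BT2} first.
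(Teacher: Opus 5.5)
Your proposal is correct and follows essentially the paper's route. The paper simply notes that $P_{r',r}$ is the image of (the positive loops of) the group scheme $\CG_{\bx,0}\cap{}^w\CG_{\bx,0}$ and cites \cite[Proposition 3 and Remark 4]{HR} for the connectedness of that group scheme. The one nontrivial input in your sketch is that $G_{\bx,0}\cap G_{z\bx,0}$ equals the connected concave-function group $\CT(\COk)\prod_{\a}U_{\a,\Omega}$, with no extra components. That is precisely the Haines--Rapoport statement, so cite it at that point rather than BT2 alone.

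Your reduction to $w=\dz$ is harmless but unnecessary, since $\bx$ and $w\bx$ already lie in a common apartment. It also needs no adjustment of $r'$, because $g_1,g_2\in G_{\bx,0}$ normalize $G_{\bx,r+}$ and $G_{\bx,r'+}$.
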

\begin{proof}
    It follows from the fact that $\CG_{\bx, 0} \cap {}^w \CG_{\bx, 0}$ is a connected group scheme. See \cite[Proposition 3 and Remark 4]{HR}.
\end{proof}

\begin{proposition} \label{identification}
    The map $(g_1, g_2) \mapsto g_1 w g_2\i$ induces an isomorphism \[\th: (G_{r'} \times G_r) / P_{r', r} \cong (G_{\bx, 0} w G_{\bx, 0}) / G_{\bx, r+}.\]
\end{proposition}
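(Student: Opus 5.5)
We prove this by checking, in turn, that $\th$ is well defined, surjective and injective, reducing everything to statements about subgroups of $G_{\bx,0}$.

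\textbf{The map.} Start from $\tilde\th: G_{\bx,0} \times G_{\bx,0} \to G_{\bx,0} w G_{\bx,0}$, $(g_1, g_2) \mapsto g_1 w g_2\i$, which is clearly surjective. Compose it with the projection to $(G_{\bx,0} w G_{\bx,0})/G_{\bx,r+}$, where $G_{\bx,r+}$ acts by right multiplication.

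\textbf{Well defined.} We must check that the composite factors through $G_{r'} \times G_r$ and is constant on $P_{r',r}$-cosets.
\begin{itemize}
\item Replacing $g_2$ by $g_2 h$ with $h \in G_{\bx,r+}$ changes $g_1 w g_2\i$ only by right multiplication by an element of $G_{\bx,r+}$.
\item Replacing $g_1$ by $g_1 h'$ with $h' \in G_{\bx,r'+}$ gives $g_1 w ({}^{w\i}h') g_2\i$. Since $G_{\bx,r+}$ is normal in $G_{\bx,0}$, this equals $g_1 w g_2\i$ times an element of ${}^{g_2}({}^{w\i}G_{\bx,r'+}) \subseteq G_{\bx,r+}$, by the choice of $r'$.
\item For $(x, w\i x w)$ with $x \in G_{\bx,0} \cap {}^w G_{\bx,0}$, we have $g_1 x w (w\i x w)\i g_2\i = g_1 w g_2\i$, so the map is invariant under right translation by $P_{r',r}$.
\end{itemize}
Hence $\th$ is a well-defined surjective map. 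It is a morphism of perfect schemes, since it is induced by the morphism $L^+\CG_{\bx,0} \times L^+\CG_{\bx,0} \to LG$ followed by quotients.

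\textbf{Injectivity.} Suppose $g_1 w g_2\i \in g_1' w g_2'^{-1} G_{\bx,r+}$. Use normality of $G_{\bx,r+}$ to write this as $g_1 w g_2\i = g_1' w h g_2'^{-1}$ with $h \in G_{\bx,r+}$. Replacing $g_2'$ by $g_2' h\i$ (same class in $G_r$), we may assume $h = 1$. Then $x := g_1'^{-1} g_1 = w g_2'^{-1} g_2 w\i$ lies in $G_{\bx,0} \cap {}^w G_{\bx,0}$, and $(g_1, g_2) = (g_1' x, g_2' w\i x w)$. So the two pairs differ by an element of $P_{r',r}$, modulo the kernels of the projections.

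\textbf{Main obstacle: scheme-theoretic isomorphism.} Bijectivity on $\ov\BF_q$-points does not by itself give an isomorphism of (perfect) schemes. Two approaches:
\begin{itemize}
\item Run the above argument functorially on $R$-points for perfect $R$, using that all the groups involved are perfectly smooth. One then needs the quotients to be fppf/étale sheaf quotients, which requires local sections of $G_{\bx,0} \to G_r$; these exist because $L^+$ of a smooth group scheme has surjective truncation maps with pro-unipotent, hence perfectly smooth, kernels.
\item Identify the target with the homogeneous space $(G_{r'} \times G_r)/\mathrm{Stab}(w)$ under the left-right action. The stabilizer of the point $w G_{\bx,r+}$ is exactly $P_{r',r}$, as computed in the injectivity step. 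Bijective orbit maps of homogeneous spaces of perfect group schemes are isomorphisms, because in the perfect setting Frobenius-inseparability issues disappear.
\end{itemize}
I expect the second approach to be cleaner, possibly combined with the connectedness of $P_{r',r}$ from the previous lemma.
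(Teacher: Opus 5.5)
Your proposal is correct and is essentially the paper's argument. The paper writes down the inverse $\vartheta(xwyG_{\bx,r+}) = (xG_{\bx,r'+}, y\i G_{\bx,r+})P_{r',r}$ and says one checks directly that $\th$ and $\vartheta$ are mutually inverse; your well-definedness and injectivity computations are exactly those checks, with injectivity amounting to the well-definedness of $\vartheta$. Your extra discussion of why the bijection is an isomorphism of perfect schemes addresses a point the paper simply asserts (that $\vartheta$ ``gives a morphism''), and the homogeneous-space argument you sketch is a reasonable way to fill that in.
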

\begin{proof}
    Let $z = x w y G_{\bx, r+} \in (G_{\bx, 0} w G_{\bx, 0}) / G_{\bx, r+}$ for some $x, y \in G_{\bx, 0}$. Then the map $z \mapsto (x G_{\bx, r'+}, y\i G_{\bx, r+}) P_{r', r}$ gives a morphism \[\vartheta: (G_{\bx, 0} w G_{\bx, 0}) / G_{\bx, r+} \to (G_{r'} \times G_r) / P_{r', r}.\] One checks directly that $\th$ and $\vartheta$ are inverse to each other. 
\end{proof}

Let $\rho$ be a virtual smooth finite-dimensional representation of $G_{\bx, 0}^F$. We denote by ${}^w\rho$ the representation of $G_{\bx, 0}^F \cap {}^w G_{\bx, 0}^F$ given by ${}^w\rho(x) = \rho(w\i x w)$. The intertwining space by $w$ is defined as \[I_w(\rho) = \Hom_{G_{\bx, 0}^F \cap {}^w G_{\bx, 0}^F}(\rho, {}^w \rho).\]
\begin{lemma} \label{finite}
    We have \[\dim I_w(\CR_{T_r}^{G_r}(\phi)) = \dim (\CR_{T_{r'}}^{G_{r'}}(\phi) \boxtimes \CR_{T_r}^{G_r}(\phi\i))^{P_{r', r}^F}.\]
\end{lemma}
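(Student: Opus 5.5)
We reduce the statement to the elementary Frobenius reciprocity/character-theoretic identity for finite groups. The plan is to realize both sides as invariants of the finite group $H := G_{\bx, 0}^F \cap {}^w G_{\bx, 0}^F$. First we use Lemma \ref{independent} to replace $\CR_{T_r}^{G_r}(\phi)$ on the left side by $\CR_{T_{r'}}^{G_{r'}}(\phi)$ when it appears as the source representation. Both are inflations to $G_{\bx,0}^F$ of the same virtual representation $\rho$. Then
\[I_w(\rho) = \Hom_H(\rho, {}^w\rho) = (\rho^\vee \otimes {}^w\rho)^H .\]
Here the dimension of a virtual Hom space is understood via the bilinear extension $\dim \Hom_H(\rho_1,\rho_2) = \langle \chi_{\rho_1}, \chi_{\rho_2}\rangle_H$.

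Next we identify $H$ with $P_{r',r}^F$ through the map $x \mapsto (x, w^{-1}xw)$, and track the kernels. Since $w \in G^F$, the group $P_{r',r}^F$ is the image of $H$ in $G_{r'}^F \times G_r^F$. The choice ${}^{w\i} G_{\bx, r'+} \subseteq G_{\bx, r+}$ guarantees that this map is well defined on $G_{r'}$-level: an element of $H\cap G_{\bx,r'+}$ has $w^{-1}xw \in G_{\bx,r+}$. Hence both $\rho$ (inflated from $G_{r'}^F$) and ${}^w\rho$ (inflated from $G_r^F$ via $x\mapsto w^{-1}xw$) factor through $P_{r',r}^F$. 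The $H$-invariants therefore equal the $P_{r',r}^F$-invariants of the external tensor product, with $x \in H$ acting on the first factor through $G_{r'}$ and on the second through $w^{-1}xw \in G_r$. One point needs a short check: the surjectivity $H \to P_{r',r}^F$. It holds by definition if we read $P_{r',r}^F$ as the image of $H$. If instead $P_{r',r}^F$ means the $F$-fixed points of the connected group $P_{r',r}$, surjectivity follows from Lang's theorem applied to the connected kernel. This is where the connectedness lemma just proved is used.

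It remains to match the factors. The first factor must be the dual $\rho^\vee$, while the statement writes $\CR_{T_{r'}}^{G_{r'}}(\phi)$ together with $\CR_{T_r}^{G_r}(\phi\i)$. We would use the standard fact that $H_c^*(X_r)[\phi\i]$ is the contragredient of $H_c^*(X_r)[\phi]$ at the level of characters. Its character is $\overline{\chi}$, obtained from the Lefschetz trace formula: the character values are $\ov\BQ_\ell$-combinations of traces, and replacing $\phi$ by $\phi\i$ conjugates them, using that $T_r^F$-isotypic projections are given by $|T_r^F|^{-1}\sum_t \phi(t)^{\pm1} t^*$. Dimensions of invariants are symmetric under swapping a virtual representation with its dual, since $\dim V^H = \dim (V^\vee)^H$ for honest modules and hence by linearity for virtual ones. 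So we may dualize the whole tensor product, and the roles align to give exactly $\CR_{T_{r'}}^{G_{r'}}(\phi) \boxtimes \CR_{T_r}^{G_r}(\phi\i)$.

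The main obstacle is bookkeeping rather than depth: keeping track of which factor is dualized and confirming that the $\phi\i$-part is the contragredient. We would settle the latter by the character formula, which is the cleanest route and works for virtual representations.
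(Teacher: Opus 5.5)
Your proposal is correct and is essentially the paper's argument. You pass the source to level $r'$ via Lemma \ref{independent}, write the Hom space as invariants of an external tensor product under the graph of $x \mapsto w\i x w$, identify the image of that graph with $P_{r', r}^F$, and use $\CR_{T_r}^{G_r}(\phi)^\vee = \CR_{T_r}^{G_r}(\phi\i)$. The paper uses this last identity without comment; your Lefschetz justification of it is sound once one notes that the Lefschetz numbers of $(g,t)$ on $X_r$ are rational integers, so that only the $\phi$-values are conjugated.

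One minor misattribution: the surjectivity of $G_{\bx,0}^F \cap {}^w G_{\bx,0}^F \to P_{r',r}^F$ comes from Lang's theorem for the connected kernel $G_{\bx, r'+}$. It does not use the connectedness of $P_{r',r}$, which is needed only later, when Proposition \ref{quotient} is applied with $N = P_{r',r}$ in Proposition \ref{geo}.
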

\begin{proof}
    By the choice of $r'$, the virtual modules $\CR_{T_r}^{G_r}(\phi)$ and ${}^w\CR_{T_r}^{G_r}(\phi)$ of $G_{\bx, 0}^F \cap {}^w G_{\bx, 0}^F$ descend to virtual modules of $(G_{\bx, 0} \cap {}^w G_{\bx, 0})^F / G_{\bx, r'+}^F$. Let $H$ be the graph of the group homomorphism \[(G_{\bx, 0} \cap {}^w G_{\bx, 0})^F / G_{\bx, r'+}^F \to ({}^{w\i}G_{\bx, 0} \cap G_{\bx, 0})^F / {}^{w\i} G_{\bx, r'+}^F\] given by $x \mapsto w\i x w$. Then we have \begin{align*}
        &\quad\ \dim I_w(\CR_{T_r}^{G_r}(\phi)) \\ &=\dim \Hom_{G_{\bx, 0}^F \cap {}^w G_{\bx, 0}^F}(\CR_{T_r}^{G_r}(\phi), {}^w\CR_{T_r}^{G_r}(\phi))  \\  &=\dim \Hom_{G_{\bx, 0}^F \cap {}^w G_{\bx, 0}^F}(\CR_{T_{r'}}^{G_{r'}}(\phi), {}^w\CR_{T_r}^{G_r}(\phi)) \\ &=\dim \Hom_{(G_{\bx, 0}^F \cap {}^w G_{\bx, 0}^F)/G_{\bx, r'+}^F}(\CR_{T_{r'}}^{G_{r'}}(\phi), {}^w\CR_{T_r}^{G_r}(\phi)) \\ &=\<\CR_{T_{r'}}^{G_{r'}}(\phi), {}^w\CR_{T_r}^{G_r}(\phi)\>_{(G_{\bx, 0}^F \cap {}^w G_{\bx, 0}^F)/G_{\bx, r'+}^F} \\ &= \dim (\CR_{T_{r'}}^{G_{r'}}(\phi) \boxtimes \CR_{T_r}^{G_r}(\phi)^*)^H \\ & = \dim (\CR_{T_{r'}}^{G_{r'}}(\phi) \boxtimes \CR_{T_r}^{G_r}(\phi\i))^H \\ &= \dim (\CR_{T_{r'}}^{G_{r'}}(\phi) \boxtimes \CR_{T_r}^{G_r}(\phi\i))^{P_{r', r}^F},    
    \end{align*} where the second identity follows from Lemma \ref{independent} and the last equality follows from the fact that the image of $H$ under the natural projection $((G_{\bx, 0} \cap {}^w G_{\bx, 0})^F / G_{\bx, r'+}^F) \times (({}^{w\i}G_{\bx, 0}^F \cap G_{\bx, 0})^F / {}^{w\i} G_{\bx, r'+}^F) \to G_{r'}^F \times G_r^F$ is $P_{r', r}^F$.   
\end{proof}

Consider the variety \[\Sigma =\{(\g, \psi) \in (F\CI_{\phi, r'}^\dag \times F\CI_{\phi, r}^\dag) \times (G_{r'} \times G_r)/P_{r', r}; F(\psi) = \g\i \ps\},\] on which $T_{r'}^F \times T_r^F$ acts by $\t: (\g, \psi) \mapsto (\t \g \t\i, \t\psi)$.
\begin{proposition} \label{geo}
    We have \[\dim I_w(\CR_{T_r}^{G_r}(\phi)) = \dim (\CR_{T_{r'}}^{G_{r'}}(\phi) \boxtimes \CR_{T_r}^{G_r}(\phi\i))^{P_{r', r}^F} = \dim H_c^*(\Sigma, \ov\BQ_\ell)[\phi \boxtimes \phi\i].\]
\end{proposition}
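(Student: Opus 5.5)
The first equality is exactly Lemma \ref{finite}, so only the second equality needs proof. The plan is the standard Deligne--Lusztig computation of invariants under a connected group acting on a product of varieties. First we identify the virtual representation $\CR_{T_{r'}}^{G_{r'}}(\phi) \boxtimes \CR_{T_r}^{G_r}(\phi\i)$ of $G_{r'}^F \times G_r^F$ with
\[
H_c^*(X_{r'} \times X_r, \ov\BQ_\ell)[\phi \boxtimes \phi\i],
\]
using the Künneth formula. Taking $P_{r',r}^F$-invariants commutes with taking the $T_{r'}^F \times T_r^F$-isotypic part, since the two actions commute. It then gives
\[
H_c^*\bigl(P_{r',r}^F \backslash (X_{r'} \times X_r), \ov\BQ_\ell\bigr)[\phi \boxtimes \phi\i],
\]
because $P_{r',r}^F$ is finite and quotients by finite groups compute invariants in $\ell$-adic cohomology with compact support.

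Next we rewrite the quotient $P_{r',r}^F \backslash (X_{r'} \times X_r)$ in terms of $\Sigma$. Write $X_{r'} \times X_r = L\i(F\CI^\dag_{\phi,r'} \times F\CI^\dag_{\phi,r})$, where $L$ is the Lang map of $G_{r'} \times G_r$. Consider the map
\[
\Xi\colon X_{r'} \times X_r \to \Sigma, \qquad x \mapsto \bigl(L(x), (x\i P_{r',r})\bigr).
\]
Here we should take care of left versus right conventions; if needed, we replace the pair $(x_1, x_2)$ by its inverse so that $\psi = x\i P_{r',r}$ lives in $(G_{r'} \times G_r)/P_{r',r}$ in the correct way.

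We then check that $\Xi$ lands in $\Sigma$. With $\psi = x\i P$ we get $F(\psi) = F(x)\i P$. Since $P_{r',r}$ is $F$-stable (because $w \in G^F$), this equals $\g\i x\i P = \g\i\psi$ with $\g = x\i F(x)$. We also check equivariance: $x \mapsto x\t$ corresponds to $(\g,\psi) \mapsto (\t\i\g\t, \t\i\psi)$, which is the stated $T_{r'}^F \times T_r^F$-action up to replacing $\t$ by $\t\i$. This replacement is harmless because it pairs with the characters consistently; the dual $\phi\i$ has already been absorbed in Lemma \ref{finite}.

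We then show that $\Xi$ is a torsor for $P^F$ acting on the left of $x$, more precisely for the action $x \mapsto p x$ after the inversion convention above. The fibers of $\Xi$ are described as follows. Suppose $x\i P = y\i P$ and $L(x) = L(y)$. Then $y = p x$ with $p \in P$, and $F(p) = p$, so $p \in P^F$. Surjectivity is where the connectedness of $P_{r',r}$ enters. Given $(\g,\psi)$, lift $\psi = h P$. The condition $F(h) \in \g\i h P$ means $h\i \g F(h) \in P$. By Lang's theorem for the connected group $P_{r',r}$, we can modify $h$ by an element of $P$ so that $\g = h F(h)\i$. Then $x = h\i$ works.

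It follows that $\Xi$ induces an isomorphism $P^F \backslash (X_{r'} \times X_r) \cong \Sigma$ that is equivariant for the torus action. Taking the $\phi \boxtimes \phi\i$-isotypic part of $H_c^*$ gives the claim.

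The main obstacle is bookkeeping rather than substance. One must make the left/right actions and the inversions consistent between the definition of $X_r$, where $G^F$ acts on the left, the quotient by $P$ on the right in $\Sigma$, and the torus action written in $\Sigma$. One must also verify that the Lang-map surjectivity argument really uses only the connectedness lemma for $P_{r',r}$ and that $\Xi$ is a morphism of perfect schemes with the claimed fibers, so that the quotient is a genuine isomorphism rather than merely a bijection. Everything else is formal.
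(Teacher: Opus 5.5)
Your proposal is correct and is essentially the paper's proof. The first equality is Lemma \ref{finite}, and the second is Proposition \ref{quotient} applied to $M = G_{r'} \times G_r$, $H = \CI_{\phi, r'}^\dag \times \CI_{\phi, r}^\dag$, $N = P_{r', r}$, using the same map $m \mapsto (L(m), m\i N)$ and the same Lang-theorem surjectivity argument based on the connectedness of $P_{r',r}$; the K\"unneth and finite-quotient steps you spell out are left implicit in the paper, and the $\t \leftrightarrow \t\i$ swap is harmless not because of anything absorbed in Lemma \ref{finite} but because the $\phi \boxtimes \phi\i$ and $\phi\i \boxtimes \phi$ isotypic parts have $P_{r',r}^F$-invariants of equal dimension (Lefschetz characters are integer-valued, so $\CR_{T_r}^{G_r}(\phi\i)$ is the dual of $\CR_{T_r}^{G_r}(\phi)$).
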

\begin{proof}
    It follows from Lemma \ref{finite} and Proposition \ref{quotient} below. 
\end{proof}

\begin{proposition} \label{quotient}
    Let $M$ be a linear algebraic group over $\BF_q$ with Frobenius map $F$. Let $H, N \subseteq M$ be two closed subgroups such that $N = FN$ is connected. Then the map $m \mapsto (L(m), m\i N)$ induces an isomorphism \[\k: N^F \backslash L\i(FH) \overset \sim \to \{(x, \psi) \in FH \times M/N; F(\psi) = x\i \psi\},\] where $L: M \to M, \ m \mapsto m\i F(m)$ denotes the Lang map.
\end{proposition}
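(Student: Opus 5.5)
We prove Proposition \ref{quotient} by checking that $\k$ is well defined, $N^F$-invariant, bijective on points, and then an isomorphism of varieties.

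\emph{Well-definedness and invariance.} Let $m \in L\i(FH)$ and put $x = L(m) = m\i F(m) \in FH$ and $\psi = m\i N$. Since $FN = N$,
\[F(\psi) = F(m)\i N = x\i m\i N = x\i \psi,\]
so $\k(m)$ lies in the target. For $n \in N^F$ we have $L(nm) = m\i n\i F(n) F(m) = L(m)$ and $(nm)\i N = m\i n\i N = m\i N$. Hence $\k$ factors through $N^F \backslash L\i(FH)$.

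\emph{Injectivity.} Suppose $\k(m) = \k(m')$. Then $m'^{-1}N = m\i N$, so $m' = nm$ for some $n \in N$. Equality of Lang images gives $m\i n\i F(n) F(m) = m\i F(m)$, so $F(n) = n$ and $n \in N^F$.

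\emph{Surjectivity.} This is the step that uses the connectedness of $N$. Take $(x, \psi)$ in the target and write $\psi = g N$ with $g \in M$. The condition $F(\psi) = x\i\psi$ reads $F(g) N = x\i g N$, that is, $g\i x F(g) \in N$. By Lang's theorem for the connected $F$-stable group $N$, there is $n \in N$ with
\[n\i F(n) = \bigl(g\i x F(g)\bigr)\i = F(g)\i x\i g.\]
Put $m = n g\i$. Then $m\i N = g n\i N = gN = \psi$, and
\[L(m) = g n\i F(n) F(g)\i = g F(g)\i x\i g F(g)\i.\]
This is not yet $x$, so the choices need adjusting. Set $m = (g n')\i$ with $n' \in N$ to be found. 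Then $m\i N = g n' N = \psi$ automatically, and
\[L(m) = g n' F(n')\i F(g)\i.\]
We need this to equal $x$, i.e. $n' F(n')\i = g\i x F(g) =: y \in N$. Lang's theorem applied to the map $n' \mapsto n' F(n')\i$ (which is also surjective on a connected group, by applying Lang to $n'^{-1}$) produces such an $n'$. Hence $L(m) = x$, and $m \in L\i(FH)$ since $x \in FH$. So $\k$ is bijective on $\ov\BF_q$-points.

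\emph{Isomorphism of varieties.} We realize the target as a quotient. The map $M \to M/N$ is an $N$-torsor, and pulling back along it gives
\[\{(x, g) \in FH \times M ; g\i x F(g) \in N\}.\]
The group $N$ acts on this by right translation on $g$, and the target is the quotient by this action. Consider the morphism
\[L\i(FH) \times N \to \{(x,g)\}, \quad (m, n') \mapsto \bigl(L(m),\, m\i n'\bigr),\]
suitably twisted so that it is compatible with the constructions above. Comparing stabilizers, $\k$ becomes a bijective morphism between quotients, and it is étale because the Lang map has bijective differential. Hence $\k$ is an isomorphism. Alternatively, one can construct the inverse directly as $(x, gN) \mapsto N^F (g n')\i$, and note that it is well defined because $n'$ is unique up to $N^F$.

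The main obstacle is exactly this last step: making "bijective implies isomorphism" rigorous. In the perfect-scheme setting used in this paper, bijective separated étale morphisms of such varieties suffice, and for the cohomological application in Proposition \ref{geo} a universal homeomorphism is already enough.
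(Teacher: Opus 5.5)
Your argument is correct and is essentially the paper's: injectivity is immediate, and surjectivity comes from Lang's theorem for the connected group $N$. Your corrected choice $m = (gn')^{-1}$ with $n'F(n')^{-1} = g^{-1}xF(g)$ is exactly the paper's $m = uz^{-1}$ with $u^{-1}F(u) = z^{-1}xF(z)$ (take $u = n'^{-1}$ and $z = g$), so you can simply delete the false start with $(g^{-1}xF(g))^{-1}$.

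The paper only checks bijectivity and says nothing about the scheme structure. Of your two sketches for that, keep the direct inverse; it is a morphism because $n' \mapsto n'F(n')^{-1}$ identifies $N/N^F$ with $N$. The differential/\'etaleness remark is only heuristic, and it is unavailable for the perfect groups $G_{r'} \times G_r$ to which the proposition is applied.
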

\begin{proof}
   First note that $\k$ is injective. We show it is surjective. Let $(x, \psi) \in FH \times M/N$ such that $F(\psi) = x\i\psi$. Suppose that $\psi = z N$ for some $z \in M$. Then the equality $F(\psi) = x\i\psi$ is equivalent to the condition that $z\i x F(z) \in N$. As $N = FN$ is connected, there exists $u \in N$ such that $z\i x F(z) = u\i F(u)$. Set $m = u z\i$. Then $m\i N = z N$ and $L(m) = z u\i F(u) F(z\i) = x$, that is, $\k(N^F m) = (x, \psi)$. Hence $\k$ is surjective as desired.  
\end{proof}

\section{Cohomological vanishing results} \label{sec:Sigma}

Let the notation be as in previous sections. In this section, we deduce cohomological vanishing results on the variety $\Sigma$, which will be used to compute the dimension of the isotypic component $H_c^*(\Sigma, \ov\BQ_\ell)[\phi \boxtimes \phi\i]$ appearing in Proposition \ref{geo}.

Let $N_T$ be the normalizer of $T$ in $G$. Denote by $W = N_T(\brk) / T(\brk)$ and $\tW = N_T(\brk) / \CT = X_*(T) \rtimes W$ the Weyl group and the Iwahori-Weyl group of $G$ respectively. Recall the Bruhat decompositions \[G = \bigsqcup_{x \in \tW} \CI x \CI, \quad  G_{\bx, 0} = \bigsqcup_{x \in W_\bx} \CI x \CI,\] where $W_\bx \subseteq \tW$ denotes the Weyl group of $G_{\bx, 0}$, and $\CI$ is the Iwahori subgroup generated by $\CT$ and $G^f$ for $f \in \tPhi_\re^{>0}$.

For $w \in G^F$ we have \[G_{\bx, 0} w G_{\bx, 0} = \bigsqcup_{z \in W_\bx z_w W_\bx} \CI z \CI,\] where $z_w \in \tW$ satisfies $w \in \CI z_w \CI$.

Using the isomorphism $\th$ in Proposition \ref{identification}, we have a decomposition \[\Sigma = \bigsqcup_{z \in W_\bx z_w W_\bx} \Sigma_z,\] where \begin{align*} \Sigma_z &= \{(\g, \psi) \in \Sigma; \th(\psi) \in \CI z \CI / G_{\bx, r+}\} \\ &\cong \{(x_1, x_2, x) \in \CI_{\phi, r'}^\dag \times \CI_{\phi, r}^\dag  \times (\CI z \CI / G_{\bx, r+}); F(x_1 x x_2\i) = x\}\end{align*} on which $T_{r'}^F \times T_r^F$ acts by $(\t_1, \t_2): (x_1, x_2, x) \mapsto (\t_1 x_1 \t_1\i, \t_2 x_2 \t_2\i, \t_1 x \t_2\i)$.

\subsection{The subset $\Sigma_z$}
We fix $z \in W_\bx z_w W_\bx$. 
\begin{lemma} \label{value}
    Let $f \in \tPhi_\re$. We have $z(f)(\bx) - f(\bx) = \a_f(z\i(\bx) - \bx)$. In particular, $z(f)(\bx) \le f(\bx) \Leftrightarrow z(f^\dag)(\bx) \ge f^\dag(\bx)$.
\end{lemma}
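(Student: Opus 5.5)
The plan is to compute directly from the definition of the action of $\tW \rtimes \<F\>$ on affine functions, $x(f)(v) = f(x\i(v))$. Write $z \in \tW = X_*(T) \rtimes W$ as $z = t^\l u$ with $\l \in X_*(T)$ and $u \in W$. Then $z$ acts on $X_*(T)\otimes\BR$ by $v \mapsto u(v) + \l$, and hence $z\i(v) = u\i(v - \l)$. For $f = \a_f + n_f \in \tPhi_\re$ this gives
\[z(f)(v) = \a_f(u\i(v-\l)) + n_f = (u\a_f)(v) - (u\a_f)(\l) + n_f,\]
so $\a_{z(f)} = u(\a_f)$ and $n_{z(f)} = n_f - (u\a_f)(\l)$.

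Evaluating at $\bx$ is then immediate from the definition:
\[z(f)(\bx) - f(\bx) = f(z\i(\bx)) - f(\bx) = \a_f(z\i(\bx)) + n_f - \a_f(\bx) - n_f = \a_f(z\i(\bx) - \bx),\]
where the last step uses that $f$ is affine with linear part $\a_f$. This is the first identity. It does not even need the decomposition of $z$, only that $z\i$ acts by an affine transformation and that $f$ is affine.

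For the second statement, recall $f^\dag = r_f - f$. Its linear part is $-\a_f$ and its constant part is $r_f - n_f$, where $r_f = r_{\a_f}$ is a number depending only on the root. The natural action of $z$ on such an affine function $g$ is still $z(g)(v) = g(z\i(v))$. Applying the same computation to $g = f^\dag$ gives
\[z(f^\dag)(\bx) - f^\dag(\bx) = -\a_f(z\i(\bx) - \bx) = -(z(f)(\bx) - f(\bx)).\]
Hence $z(f)(\bx) \le f(\bx)$ if and only if $z(f^\dag)(\bx) \ge f^\dag(\bx)$.

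The only point needing care is the meaning of $z(f^\dag)$. If one instead reads it as $(z f)^\dag = r_{z(f)} - z(f)$, one must know that $r_{u(\a)} = r_\a$ for $u \in W$, which is not true in general. So I will state explicitly that $z$ acts on $f^\dag$ as an affine function (constants are fixed), and then the claim follows from the displayed computation alone.
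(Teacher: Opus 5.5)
Your proof is correct and is essentially the paper's argument: the paper likewise reduces to $z(f)(\bx) = f(z\i(\bx))$ (going through the decomposition $z = \l \rtimes u$, which, as you note, is not actually needed) and then applies the first identity to $f^\dag$ using $\a_{f^\dag} = -\a_f$. Your reading of $z(f^\dag)$ as $z$ applied to $f^\dag$ is the one the paper uses. Note also that $f^\dag = (-\a_f, r_f - n_f)$ already lies in $\tPhi_\re$ because $r_f \in \BZ_{\ge 0}$, so the first identity applies to it directly without extending the action.
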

\begin{proof}
    Assume $z = \l \rtimes u$ with $\l \in X_*(T)$ and $u \in W$. Then $z(f) = u(\a_f) + n_f + u(\a_f)(\l)$ and $z\i(\bx) = u\i(\bx) + u\i(\l)$. Thus \[z(f)(\bx) = u(\a_f)(\bx) + n_f + u(\a_f)(\l) = f(z\i(\bx)).\] Hence $z(f)(\bx) = f(z\i(\bx)) = f(\bx) + \a_f(z\i(\bx) - \bx)$ and the first statement follows.

    As $\a_{f^\dag} = -\a_f$, we have \[z(f^\dag)(\bx) \le f^\dag(\bx) \Leftrightarrow \a_{f^\dag}(z\i(\bx) - \bx) \le 0 \Leftrightarrow \a_f(z\i(\bx) - \bx) \ge 0 \Leftrightarrow z(f)(\bx) \ge f(\bx).\] Hence the second statement follows.
\end{proof}

Let $\tPhi_{\re, t}^{>0} = \{f \in \tPhi_\re^{>0}; 0 \le f(\bx) \le t\}$ for $t \in \BR_{\ge 0}$. We define \begin{align*}
    A &=\{f \in \tPhi_\re^{>0}; \text{ $z\i(f)(\bx) \le r$, either $z\i(f)(\bx) < f(\bx)$ or $z\i(f)(\bx) = f(\bx)$ and $\a_f > 0$}\}; \\ B &=\{f \in \tPhi_{\re, r}^{>0}; \text{ either $z(f)(\bx) < f(\bx)$ or $z(f)(\bx) = f(\bx)$ and $\a_{z(f)} < 0$}\}; \\ C &= \{f \in \tPhi_\re^{>0}; z\i(f) < 0\}.
\end{align*}

For $D \subseteq \tPhi_{\re, t}^{>0}$ we put $G_t^D = \prod_{f \in D} G_t^f$, where the product is taken with respect to the linear order $\le$ on $(\BZ\Phi) \times \BR$. For $v \in G_t^D$ there are unique elements $v_f \in G_t^f$ such that $v = \prod_{f \in D} v_f$. By abuse of notation, we also view $v_f$ as elements in $\ov\BF_q$ via the bijections $\BA^1 \overset {u_f} \to G_t^f$.
\begin{lemma} \label{coordinate}
    There is an isomorphism \[ G_{r'}^A \times zT_r \times G_r^B \to \CI z \CI / G_{\bx, r+},\quad\ (v_1, \dz, v_2) \mapsto v_1 \dz v_2\i.\]
\end{lemma}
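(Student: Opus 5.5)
The plan is to reduce to the standard Iwahori factorization of the double coset $\CI z \CI$. Write $\CI = \CT \cdot \prod_{f \in \tPhi_\re^{>0}} G^f$, with the product taken in the order $\le$; modulo $G_{\bx, r+}$ only the $f$ with $f(\bx) \le r$ survive. First I will show that $\CI z \CI = \CI_z^- \, \dz\, \CI$, where $\CI_z^- = \prod_{f \in C} G^f$ is the product over positive affine roots sent to negative ones by $z\i$, and $\dz$ is a lift of $z$. This is the usual fact that $\CI \cap {}^z\CI$ is absorbed on the right and that $\CI = (\prod_{f\in C} G^f)(\CI \cap {}^z \CI)$, which follows from the commutator relations as in Proposition \ref{I-group}, applied to the closed set of positive affine roots. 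I will then pass to the quotient by $G_{\bx,r+}$ on the right.

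The second step is to redistribute the factors so that everything is recorded in bounded jet levels. Given $g = v \dz u$ with $v \in \CI_z^-$ and $u \in \CI$, I split $u$, using the ordered product decomposition, into the part in $\CT$, the part $\prod_{f \in B} G^f$, and the remaining part. The remaining factors $G^f$ with $f \notin B$ satisfy $z(f)(\bx) \ge f(\bx)$, with the tie broken by the sign of $\a_{z(f)}$. These should be moved across $\dz$ to the left, where they become $G^{z(f)}$. Modulo $G_{\bx, r+}$, the left factors that matter are exactly those $f'$ with $z\i(f')(\bx) \le r$. This is why $A$ is defined by the condition $z\i(f)(\bx) \le r$ together with the inequality $z\i(f)(\bx) \le f(\bx)$ and its tie-break, and why the left factor only needs level $r'$, since ${}^{w\i}G_{\bx, r'+} \subseteq G_{\bx, r+}$. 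Lemma \ref{value} is used to see that the conditions defining $A$ and $B$ are complementary in the right way. For each $f$ with $f(\bx)\le r$, exactly one of $f\in B$ or $z(f)\in A$ holds, and the degenerate equality case is handled by the sign conventions on $\a$. Concretely, $\{f: f\in B\}\sqcup\{z\i(f'): f'\in A\}$ should be exactly the set of affine roots contributing to $(\CI \cap {}^{z\i}\CI)\dz$-cosets, so the map is surjective.

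The third step is injectivity, and there I also check that the map is an isomorphism of varieties rather than just a bijection on points. Suppose $v_1 \dz v_2\i = v_1' \dz' v_2'^{-1}$ modulo $G_{\bx,r+}$. Then $\dz\i v_1\i v_1' \dz \in G^B_r\, T_r\, G^B_r$ modulo $G_{\bx,r+}$. The left side lies in the product of the $G^{z\i(f)}$ with $f \in A$, so it has affine roots $z\i(f)$ satisfying $z\i(f) < f$-type inequalities. Comparing with the unique-expression property of the ordered products, given by the big cell or Iwahori factorization of $G_{\bx,0}/G_{\bx,r+}$ into $\prod_{f}G^f_r$ for distinct root sets, forces $v_1 = v_1'$, $\dz = \dz'$ and $v_2 = v_2'$. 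The inverse morphism is obtained by reading off coordinates successively in the linear order, so it is algebraic.

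The main obstacle I expect is the bookkeeping in the boundary cases $z(f)(\bx) = f(\bx)$ and $f(\bx)$ near $r$. There one must verify, via Lemma \ref{value} and the tie-break by the sign of roots, that $A$, $B$ and the root set of $\CI\cap{}^{z}\CI$ partition correctly. One must also check that the commutators produced when reordering the factors stay inside the allowed products. For the latter I would argue by descending induction on $f(\bx)$, exactly as in the proof of Proposition \ref{I-group}, using that commutators only raise $f(\bx)$.
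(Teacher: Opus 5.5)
Your proposal is correct and is essentially the paper's argument. The paper likewise starts from the factorization $G_{r'}^C \times zT_r \times G_r^{\tPhi_{\re,r}^{>0}} \cong \CI z \CI / G_{\bx, r+}$, then uses Lemma \ref{value} to get $B = \tPhi_{\re,r}^{>0} \sm D$ and $A = z(D) \sqcup C$, where $D$ is exactly the set of roots you move across $\dz$. Your injectivity argument via big-cell uniqueness, together with the reordering into $D$- and $B$-parts (which needs their root parts to form opposite closed sets, not just the ordered product decomposition), spells out what the paper compresses into ``the statement follows.''
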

\begin{proof}
    First note that we have an isomorphism \[G_{r'}^C \times zT_r \times G_r^{\tPhi_{\re,r}^{>0}} \to \CI z \CI / G_{\bx, r+},\quad\ (v_1, \dz, v_2) \mapsto v_1 \dz v_2\i.\] Let $D$ be the set of affine roots $f \in \tPhi_{\re, r}^{>0}$ such that either $\a_f(z\i(\bx) -\bx) > 0$ or $\a_f(z\i(\bx) -\bx) = 0$ and $\a_{z(f)} > 0$. By Lemma \ref{value}, we have $B = \tPhi_{\re, r}^{>0} \sm D$ and $A = z(D) \sqcup C$. Hence the statement follows.
\end{proof}

We put \[A' = \{f \in A; f < f^\dag\};\quad B' = \{f \in B; f < f^\dag\}.\] Using the isomorphism $\CI z \CI / G_{\bx, r+} \cong G_{r'}^A \times z T_r \times G_r^B$ of Lemma \ref{coordinate}, we have a decomposition \begin{align*}&\quad\ \CI z \CI / G_{\bx, r+} \\ &= (\CI z \CI / G_{\bx, r+})^0 \sqcup \bigsqcup_{(f_1, f_2) \in A' \times B'} (\CI z \CI / G_{\bx, r+})^{f_1, f_2} \sqcup \bigsqcup_{f_1 \in A'} (\CI z \CI / G_{\bx, r+})^{f_1, *} \sqcup \bigsqcup_{f_2 \in B'} (\CI z \CI / G_{\bx, r+})^{*, f_2},\end{align*} where $(\CI z \CI / G_{\bx, r+})^{f_1, f_2}$ (resp. $(\CI z \CI / G_{\bx, r+})^{f_1, *}$, resp. $(\CI z \CI / G_{\bx, r+})^{*, f_2}$) is the set of elements $v_1 \dz v_2\i \in \CI z \CI / G_{\bx, r+}$ with $v_1 \in G_{r'}^A$ and $v_2 \in G_r^B$ such that $(v_1)_{f_1},    (v_2)_{f_2} \neq 0$ (resp. $(v_1)_{f_1} \neq 0$, resp. $(v_2)_{f_2} \neq 0$) and $(v_1)_{h_1} = 0 = (v_2)_{h_2}$ for any $h_1 \in A'$ with $h_1^\dag > f_1^\dag$ and any $h_2 \in B'$ with $h_2^\dag > f_2^\dag$ (resp. for any $h_1 \in A'$ with $h_1^\dag > f_1^\dag$ and any $h_2 \in B'$, resp. for any $h_1 \in A'$ and any $h_2 \in B'$ with $h_2^\dag > f_2^\dag$).

Then we have an induced decomposition \[\Sigma_z = \Sigma_z^0 \sqcup \bigsqcup_{(f_1, f_2) \in A' \times B'} \Sigma_z^{f_1, f_2} \sqcup \bigsqcup_{f_1 \in A'} \Sigma_z^{f_1, *} \sqcup \bigsqcup_{f_2 \in B'} \Sigma_z^{*, f_2}.\]

The main result of this section is as follows.
\begin{proposition} \label{red}
     Let $f_1\in A'$ and $f_2 \in B'$. Then \[H_c^*(\Sigma_z^{f_1, f_2}, \ov\BQ_\ell)[\phi \otimes \phi\i] = H_c^*(\Sigma_z^{f_1, *}, \ov\BQ_\ell)[\phi \otimes \phi\i] = H_c^*(\Sigma_z^{*, f_2}, \ov\BQ_\ell)[\phi \otimes \phi\i] =0.\] In particular, $H_c^*(\Sigma_z, \ov\BQ_\ell)[\phi \otimes \phi\i] = H_c^*(\Sigma_z^0, \ov\BQ_\ell)[\phi \otimes \phi\i]$.
\end{proposition}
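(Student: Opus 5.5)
The plan is the standard ``Deligne--Lusztig vanishing'' argument: on each stratum, find a connected one-dimensional torus action that changes the $T_{r'}^F \times T_r^F$ isotypic weight non-trivially. Equivalently, show that each stratum fibres over a space on which a subtorus acts with $\phi$ non-trivial on its rational points, so the $\phi\otimes\phi\i$-part vanishes. Take the stratum $\Sigma_z^{f_1,f_2}$, with $f_1 \in A'$ and $f_2 \in B'$. Set $h = \max\{f_1^\dag, f_2^\dag\}$ and treat the three cases $f_1^\dag > f_2^\dag$, $f_2^\dag > f_1^\dag$ and $f_1^\dag = f_2^\dag$ separately.

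Suppose $h = f_1^\dag$. I would write a point as $(x_1, x_2, v_1\dz v_2\i)$ and make the change of variables $x_1 \mapsto x_1 u_h(y)$, or more precisely multiply $x_1$ on the appropriate side by $u_h(y)$ with $y \in \ov\BF_q$. Note that $h > h^\dag = f_1$, so $G^h \subseteq \CI_\phi^\dag$. By Corollary \ref{commutator}, commuting $u_h(y)$ past $v_1 = \prod_f (v_1)_f$ only produces, modulo $\CI_\phi^\dag$, the toral factor $\a_h^\vee(1+\varpi^{r_h}[y\,(v_1)_{f_1}])$. The stratification was designed exactly so that $f_1$ is the unique coordinate of $v_1$ with $f^\dag = h$ and nonzero value: by definition of the stratum, the coordinates with larger $\dag$ vanish. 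From the equation $F(x_1 x x_2\i) = x$ one then gets an action of a finite group of the form
\[\{y; F(y)\text{-twisted condition}\} \cong \text{image of } \a_h^\vee(1+\varpi^{r_h}[\,\cdot\,]) \text{ in } T_{r'}^F,\]
coming from an $\BA^1$ fibration. Following the argument of \cite[\S 4]{Nie24} (or \cite{LN26}), the cohomology of each fibre computes a character sum
\[\sum_{y} \phi\bigl(N\,\a_h^\vee(1+\varpi^{r_h}[c\,y])\bigr),\qquad c = (v_1)_{f_1} \neq 0 .\]
Since $r_h \ge 1$ satisfies the non-triviality recorded after the definition of $r_\a$, and $c$ is invertible, the twisted character $y\mapsto \phi(\cdots)$ is non-trivial, so the sum vanishes. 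The case $h = f_2^\dag$ is symmetric: conjugate by $z$ and use $x_2$ and $\phi\i$. In the tie case both toral factors contribute. I would then use either one variable with the other fixed, or the combined factor $\a_h^\vee(1+\varpi^{r_h}[y((v_1)_{f_1} - c'(v_2)_{f_2})])$. The open locus where the combination is nonzero vanishes as before. The closed locus where it is zero needs the second-level trick: there the action of $\a_h^\vee$ itself, via $T_{r'}^F\times T_r^F$ twisted diagonally, acts through $\phi\otimes\phi\i$ non-trivially.

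The strata $\Sigma_z^{f_1,*}$ and $\Sigma_z^{*,f_2}$ are the one-sided versions of the same argument, with no tie case. The final statement follows from the long exact sequences of compactly supported cohomology for the finite stratification, since the remaining piece is $\Sigma_z^0$; the strata can be ordered by $(f_1^\dag, f_2^\dag)$ to get locally closed filtrations.

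The main obstacle is checking that the substitution $x_1\mapsto x_1u_h(y)$ preserves the stratum and the defining equation up to $\CI^\dag_\phi$ and $\CT_\phi$. This is the role of Lemma \ref{conjugate}, Corollary \ref{commutator}, and condition $(\dag)$: $\phi$ must be trivial on the $\CT_\phi$-junk, whereas here $\phi$ need not be trivial on $\CT_\phi$ a priori. Relatedly, the condition $r' \gg r$ must be used to place $f_1 \in A$ inside $\tPhi_{\re,r'}$. The tie case is the other delicate point, and I would first try to reduce it to the non-tie cases by a further decomposition according to whether the linear combination vanishes.
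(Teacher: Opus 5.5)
There is a genuine gap, and it sits exactly at the step you call ``the main obstacle''. Lemma~\ref{conjugate}, Corollary~\ref{commutator} and $(\dag)$ do not close it. A point of $\Sigma_z^{f_1,f_2}$ satisfies $F(x_1 v_1\dz v_2\i x_2\i)=v_1\dz v_2\i$. So whatever you insert next to $v_1$ (your $u_h(y)$ with $h=f_1^\dag$) must be pushed through $\dz v_2\i$ and absorbed into $x_2\in\CI_{\phi,r}^\dag$. This requires two things:
\begin{itemize}
\item[(i)] ${}^{\dz\i}u_{f_1^\dag}(\cdot)\in\CI_{\phi,r}^\dag$, i.e.\ $z\i(f_1^\dag)>s_{z\i(f_1)}$;
\item[(ii)] $z\i(f_1^\dag)>f_2^\dag$, so that the commutator with $v_2$ lies in $\CI_{\phi,r}^\dag$ by Corollary~\ref{commutator}.
\end{itemize}
Neither follows from $f_1^\dag>f_2^\dag$.

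Condition (i) is Lemma~\ref{gedag}, which holds only when $r_{f_1}\ge r_{z\i(f_1)}$. If $r_{z\i(f_1)}>r_{f_1}$ it can fail (e.g.\ $z\in W_\bx$ sending a root of depth $5$ to one of depth $2$), and then no shift along $f_1^\dag$ can be absorbed. The paper instead uses a pure torus deformation, $t_2=N_{\a_{z\i(f_1)}^\vee,r_{z\i(f_1)}}(\t)$ with $\d={}^{\dz}L(t_2)$. This is absorbable because $\a_{f_1}^\vee(1+\varpi^{r_{z\i(f_1)}}\COk)\subseteq\CT_\phi$. The vanishing then comes from $\phi\i$ being non-trivial on $N_{\a_{z\i(f_1)}^\vee,r_{z\i(f_1)}}(\BF_{q^N})$, i.e.\ a different root at a different depth. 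Your sum along $\a_h^\vee$ at depth $r_h$ does not produce it.

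Condition (ii) dictates the right case split. You should compare $z\i(f_1^\dag)$ with $f_2^\dag$, and $z(f_2^\dag)$ with $f_1^\dag$, using Lemma~\ref{value} and the definitions of $A,B$. Comparing $f_1^\dag$ with $f_2^\dag$ can choose the wrong side. Suppose $z\i(f_1^\dag)(\bx)=f_1^\dag(\bx)=f_2^\dag(\bx)=z(f_2^\dag)(\bx)$ and $f_1^\dag>f_2^\dag$. Then (ii) is not guaranteed. However, the root signs forced by $A$ and $B$ give $\a_{z(f_2^\dag)}>0>\a_{f_1^\dag}$, hence $z(f_2^\dag)>f_1^\dag$, so one must work on the $f_2$ side. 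No ``combined factor'' or second-level decomposition is needed in the tie case.

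Two further points. First, Proposition~\ref{red} does not assume $(\dag)$, and the proof does not use it. The $\CT_\phi$- and $\CI_\phi^\dag$-junk is absorbed into the coordinates $x_1,x_2$ (recall $\CT_\phi\subseteq\CI_\phi^\dag$) and is never evaluated by $\phi$. The only input about $\phi$ is its non-triviality on $N_{\a^\vee,r_\a}(\BF_{q^N})$, which is the definition of $r_\a$. If your argument needs $(\dag)$, it proves less than the statement.

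Second, the ``character sum on the fibres of an $\BA^1$-fibration'' is never constructed. The paper's mechanism is that of \cite[Proposition 6.4]{DL76}. Lemma~\ref{deform} builds automorphisms $\D_\t$, $\t\in\BA^1$, with $\d=u_{f_1^\dag}((v_1)_{f_1}\i(F^N(\t)-\t))$ and $t_1=N_{\a_{f_1}^\vee,r_{f_1}}(\t)$. These are chosen so that $[\d\i,v_1]$ cancels $L(t_1)$ modulo $\CI_{\phi,r'}^\dag$, and $\d=1$ for $\t\in\BF_{q^N}$. Hence $\D_\t$ equals the action of $(N_{\a_{f_1}^\vee,r_{f_1}}(\t),1)$ at those points. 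Since $\BA^1$ is connected, the family acts trivially on $H_c^*$, and so the $\phi\otimes\phi\i$-part vanishes.
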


The proof occupies the remainder of the section. We will need the following result.
\begin{lemma} \label{gedag}
    Let $f_1 \in A'$ and $f_2 \in B'$. If $r_{f_1} \ge r_{z\i(f_1)}$ (resp. $r_{f_2} \ge r_{z(f_2)}$), then $z\i(f_1^\dag) > s_{z\i(f_1)}$ (resp. $z(f_2^\dag) > s_{z(f_2)}$). 
\end{lemma}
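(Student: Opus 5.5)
We treat the statement for $f_1$; the statement for $f_2$ is symmetric, with $z$ in place of $z\i$ and $B$ in place of $A$. Throughout, $r_{f^\dag} = r_f$, since $\a_{f^\dag} = -\a_f$ and $r_{-\a} = r_\a$.

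The plan is to unwind the definitions and compare values at $\bx$. Put $g = z\i(f_1)$, a real affine root with $\a_g = z\i(\a_{f_1})$ in the linear part. Since $z(f^\dag)$ is computed affinely, $z\i(f_1^\dag) = z\i(r_{f_1} - f_1) = r_{f_1} - g$. The claim $z\i(f_1^\dag) > s_g$ then becomes $r_{f_1} - g > s_g$. By the lemma characterizing $f > f^\dag$, this means that either $r_{f_1} - g(\bx) > s_g$, or equality holds and the linear part $-\a_g$ of $r_{f_1}-g$ is positive.

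Next we use the membership conditions. From $f_1 \in A$, either $g(\bx) < f_1(\bx)$, or $g(\bx) = f_1(\bx)$ with $\a_{f_1} > 0$. From $f_1 \in A'$ we have $f_1 < f_1^\dag$, i.e. $f_1 < s_{f_1}$. This gives either $f_1(\bx) < s_{f_1}$, or $f_1(\bx) = s_{f_1}$ with $\a_{f_1} < 0$. Combining the two:
\begin{itemize}
\item If some inequality is strict, then $g(\bx) < s_{f_1}$. Hence $r_{f_1} - g(\bx) > r_{f_1} - s_{f_1} = s_{f_1} \ge s_g$, where the last step uses the hypothesis $r_{f_1} \ge r_g$.
\item Otherwise both equalities hold, which requires $\a_{f_1} > 0$ and $\a_{f_1} < 0$ simultaneously. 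This is impossible.
\end{itemize}
So the strict inequality $r_{f_1} - g(\bx) > s_g$ always holds, and the tie-breaking condition on linear parts is never needed.

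For $f_2$, the same computation gives $z(f_2^\dag) = r_{f_2} - z(f_2)$. Here $f_2 \in B$ gives $z(f_2)(\bx) < f_2(\bx)$, or equality with $\a_{z(f_2)} < 0$. Also $f_2 \in B'$ gives $f_2(\bx) < s_{f_2}$, or equality with $\a_{f_2} < 0$.

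In the subcase where at least one inequality is strict, the argument above applies verbatim. The remaining subcase has $z(f_2)(\bx) = f_2(\bx) = s_{f_2}$, where $\a_{f_2}<0$ and $\a_{z(f_2)}<0$ are compatible, so this case is not vacuous. There:
\begin{itemize}
\item If $r_{f_2} > r_{z(f_2)}$, we have $r_{f_2} - z(f_2)(\bx) = s_{f_2} > s_{z(f_2)}$ strictly, and we are done.
\item If $r_{f_2} = r_{z(f_2)}$, we get equality $r_{f_2} - z(f_2)(\bx) = s_{z(f_2)}$. We then fall back on the tie-break: the linear part of $r_{f_2} - z(f_2)$ is $-\a_{z(f_2)} > 0$, which gives $z(f_2^\dag) > s_{z(f_2)}$.
\end{itemize}

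The only delicate point is this equality case, where the ordering conventions in $A$ (using $\a_f$) and $B$ (using $\a_{z(f)}$) must be matched correctly. The main care needed is tracking which linear part governs each tie-break.
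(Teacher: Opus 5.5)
Your proposal is correct and takes essentially the same route as the paper. Both compare values at $\bx$ using the conditions defining $A'$ and $A$ (resp.\ $B'$ and $B$), reach a contradiction in the equality case for $f_1$, and settle the equality case for $f_2$ via the tie-break $\a_{z(f_2^\dag)} = -\a_{z(f_2)} > 0$; the only cosmetic difference is that you compute $z\i(f_1^\dag) = r_{f_1} - z\i(f_1)$ directly, whereas the paper cites Lemma \ref{value}, which encodes the same fact.
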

\begin{proof} 
    As $f_1 < f_1^\dag$ we have (a) either $f_1(\bx) < s_{f_1}$ or $f_1^\dag(\bx) = s_{f_1^\dag} = s_{f_1}$ and $0 < \a_{f_1^\dag} = - \a_{f_1}$. Moreover, by the definition of $A$ and Lemma \ref{value}, we have (b) either $z\i(f_1^\dag)(\bx) > f_1^\dag(\bx)$ or $z\i(f_1^\dag)(\bx) = f_1^\dag(\bx)$ and $\a_{f_1} > 0$. Suppose $r_{f_1} \ge r_{z\i(f_1)}$. Then \[z\i(f_1^\dag)(\bx) \ge f_1^\dag(\bx) \ge s_{f_1} \ge s_{z\i(f_1)}.\] Assume the equality holds (otherwise we have $z\i(f_1^\dag) > s_{z\i(f_1)}$ as desired). Then $z\i(f_1^\dag)(\bx) = f_1^\dag(\bx) = s_{f_1}$ and hence $f_1(\bx) = r_{f_1} - f_1^\dag(\bx) = s_{f_1}$. We have $\a_{f_1} < 0$ and $\a_{f_1} > 0$ by (a) and (b) respectively, which is a contradiction. So $z\i(f_1^\dag) > s_{z\i(f_1)}$ as desired.
    
    Similarly, we have (c) either $z(f_2^\dag)(\bx) > f_2^\dag(\bx)$ or $z(f_2^\dag)(\bx) = f_2^\dag(\bx)$ and $\a_{z(f_2^\dag)} = -\a_{z(f_2)} > 0$. Suppose that $r_{f_2} \ge r_{z(f_2)}$. Then \[z(f_2^\dag)(\bx) \ge f_2^\dag(\bx) \ge s_{f_2} \ge s_{z(f_2)}.\] If equality holds, it follows by (c) that $\a_{z(f_2^\dag)}  > 0$. So we always have $z(f_2^\dag) > s_{z(f_2)}$ as desired.
\end{proof}

\subsection{Vanishing results} 
In this subsection, we deduce certain vanishing results, which form the technical heart in the proof of Proposition \ref{red}.

\begin{lemma} \label{T-stable}
    The subsets $(\CI z \CI / G_{\bx, r+})^0$, $(\CI z \CI / G_{\bx, r+})^{f_1, f_2}$, $(\CI z \CI / G_{\bx, r+})^{f_1, *}$ and $(\CI z \CI / G_{\bx, r+})^{*, f_2}$ are $T_{r'} \times T_r$-invariant.
\end{lemma}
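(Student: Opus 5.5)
The plan is to compute how $(\t_1,\t_2) \in T_{r'} \times T_r$ acts in the coordinates of Lemma \ref{coordinate}, and to check that this action only rescales each coordinate by a unit. Since the subsets in question are defined purely by the vanishing or non-vanishing of certain coordinates $(v_1)_{h_1}$, $(v_2)_{h_2}$, such an action preserves them.

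Write an element as $v_1 \dz v_2\i$ with $v_1 = \prod_{f \in A} (v_1)_f \in G_{r'}^A$, $\dz \in zT_r$ and $v_2 = \prod_{f\in B} (v_2)_f \in G_r^B$. Then
\[\t_1 v_1 \dz v_2\i \t_2\i = (\t_1 v_1 \t_1\i)\,(\t_1 \dz \t_2\i)\,(\t_2 v_2 \t_2\i)\i.\]

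First, the middle factor $\t_1 \dz \t_2\i = \dz \cdot ({}^{z\i}\t_1)\t_2\i$ stays in $zT_r$. Here $\t_1$ is read through its image in $T_r$, which is legitimate because everything is taken modulo $G_{\bx,r+}$ on the right and $T$ normalizes these groups.

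Second, conjugation by a torus element respects the product decomposition along the fixed ordering $\le$. Indeed, $T_{r'}$ normalizes each root subgroup $G_{r'}^f$ and acts on it by
\[\t u_f(x) \t\i = u_{\a_f}\bigl(\a_f(\t)\varpi^{n_f}[x]\bigr).\]
Since $\a_f(\t) \in \CO_\brk^\times$, we can write it as $[\a_f(\bar\t)](1+\varpi(\cdots))$.

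This last point is the only subtlety, and I expect it to be the main step. The correction term $u_{\a_f}(\varpi^{n_f+1}(\cdots))$ lies in root subgroups $G^{f+k}$ with $k \ge 1$. So $\t_1 v_1 \t_1\i$ is not literally the coordinatewise product $\prod_f u_f(\a_f(\bar\t_1)(v_1)_f)$; it differs from it by contributions in higher affine roots $f+k$ of the same direction. Reordering these back into the normal form $\prod_{f\in A}$ produces commutators, but only in strictly larger affine roots.

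The resolution is a triangularity argument. With respect to the linear order $\le$, the new coordinate at $f$ equals $\a_f(\bar\t)(v_1)_f$ plus a polynomial in the coordinates $(v_1)_{f'}$ with $f' < f$. In particular, the leading (smallest) nonvanishing coordinate is merely rescaled by a unit. However, the subsets are defined by conditions on $(v_1)_{h_1}$ for $h_1 \in A'$ with $h_1^\dag > f_1^\dag$, i.e. via the order on $f^\dag$, which is reversed relative to $f$ within a fixed $r_f$. So the triangularity has to be checked compatibly with these conditions.

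The cleanest way around this, which I would pursue first, is to avoid the issue entirely. Replace $T_{r'}$ by its image under the Teichmüller-type section, i.e. the subgroup of $\CT(\COk)$ generated by $\l_i([\BF^\times])$ modulo depth-zero-trivial parts, together with $\CT(1+\varpi\COk)$. Elements of $\CT(1+\varpi\COk)$ act on $G^f$ by $u_f(x) \mapsto u_f(x)\cdot(\text{terms in } \CG^{f+k}, k\ge1)$. One then verifies that the vanishing pattern over $A'$ (respectively $B'$) is unchanged, because the higher terms only affect coordinates $f+k$. For these, $(f+k)^\dag = f^\dag - k < f^\dag$, and they are conditioned only when their ${}^\dag$ exceeds $f_1^\dag$, which already forces the lower ones to vanish.

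The same argument applied to $v_2$ with $\t_2$ handles $B'$. This gives invariance of all four types of subsets, and the torus $T_{r'}\times T_r$ is generated by these two kinds of elements.
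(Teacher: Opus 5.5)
Your treatment of the Teichm\"uller part of the torus is fine: because $[\cdot]$ is multiplicative, conjugation by $\l_i([c])$ sends $u_f(x)\mapsto u_f(c^{\langle \a_f,\l_i\rangle}x)$ exactly, so it respects the ordered product. The gap is in the $\CT(1+\varpi\COk)$ part.

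There you correctly have $t\,u_f(x)\,t\i = u_f(x)\,y_f$ with $y_f\in\prod_{k\ge1}\CG^{f+k}$. However, $t v_1 t\i=\prod_{f}u_f((v_1)_f)\,y_f$ is not in the normal form of Lemma \ref{coordinate}. Each $y_f$ sits in the wrong slot and must be moved past later factors $u_{f'}((v_1)_{f'})$ with $f<f'<f+k$ and $\a_{f'}\neq\a_f$. This produces commutators in $[\CG^{f+k},\CG^{f'}]$. By Corollary \ref{comm}, these lie in $\CG^{h''}$ with $h''=i(f+k)+jf'+m_{f+k,f',i,j}$, whose finite part $i\a_f+j\a_{f'}$ is a new root direction, not of the form $f+k$. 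So your sentence ``the higher terms only affect coordinates $f+k$'' is false. Your correct computation $(f+k)^\dag=f^\dag-k$ says nothing about these new roots. Nothing you wrote prevents $h''$ from being a forbidden root ($h''\in A'$ with $(h'')^\dag>f_1^\dag$), or from equalling $f_1$ itself, which would break the non-vanishing of $(v_1)_{f_1}$. The triangularity issue you flagged is therefore not avoided by the choice of generators; it is simply dropped.

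The missing ingredient is exactly what the paper proves. The set of admissible roots $\{h\in A;\ h^\dag\le\max\{h,f_1^\dag\}\}$ is closed under these commutator relations, with strict inequality: if $h_1,h_2$ are admissible and $h_3=ih_1+jh_2+m_{h_1,h_2,i,j}\in\tPhi_\re$, then $h_3^\dag<\max\{h_3,f_1^\dag\}$. The proof uses the dichotomy of Corollary \ref{comm}: either $s_{h_3}\le m_{h_1,h_2,i,j}$, or $r_{h_3}-m_{h_1,h_2,i,j}\le\max\{r_{h_1},r_{h_2}\}$. That dichotomy rests on Lemma \ref{r-gamma} and the divisibility of the structure constants in Theorem \ref{coefficient}. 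This is the genuinely small-$p$ input. Without it, $r_{h_3}$ could exceed $r_{h_1},r_{h_2}$, and $h_3^\dag$ could become large enough to make $h_3$ forbidden. The strictness is what rules out $h_3=f_1$ (since $f_1^\dag=\max\{f_1,f_1^\dag\}$), so the leading coordinate is only rescaled by a unit. The analogous statement with $f_2$ and $B$ handles $v_2$. Once you add this closure lemma to your shift computation, your outline goes through.
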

\begin{proof}
    Let $h_1, h_2 \in A$ such that $h_i^\dag \le \max\{h_i, f_1^\dag\}$ for $i \in \{1, 2\}$. Suppose $h_3 = i h_1 + j h_2 + m_{h_1, h_2, i, j} \in \tPhi_\re$ for some $i, j \in \BZ_{\ge 1}$. It suffices to show that $h_3^\dag < \max\{h_3, f_1^\dag\}$. 

    Indeed, we have either $s_{h_3} \le m_{h_1, h_2, i, j}$ or $r_{h_3} - m_{h_1, h_2, i, j} \le \max\{r_{h_1}, r_{h_2}\}$. If the former case occurs, then $h_3 \ge h_1(\bx) + h_2(\bx) + m_{h_1, h_2, i, j} > s_{h_3}$, that is, $h_3 > h_3^\dag$ as desired. If the latter case occurs, then \begin{align*}h_3^\dag &= r_{h_3} - m_{h_1, h_2, i, j} - i h_1 - j h_2 \le \max\{r_{h_1}, r_{h_2}\} - h_1 - h_2 \\  &< \max\{h_1^\dag, h_2^\dag\} \le \max\{h_1+h_2, f_1^\dag\} \le \max\{h_3, f_1^\dag\}.\end{align*} The proof is finished.
\end{proof}

For $\l \in X_*(T)$ and $m \in \BZ_{\ge 1}$ we define \[N_{\l, m}: \BA^1 \to T(\CO_\brk), \quad x \mapsto N_F^{F^N} \circ \l(1 + \varpi^m [x]).\] Note that $N_{\l, m}(\BF_{q^N}) \subseteq T(\CO_k)$.

\begin{lemma} \label{deform}
    Let $f_1\in A'$ and $f_2 \in B'$. Assume $z\i(f_1^\dag) > s_{z\i(f_1)}, f_2^\dag$ (resp. $z(f_2^\dag) > s_{z(f_2)}, f_1^\dag$) or $r_{z(f_2)} > r_{f_2}, f_1^\dag$ (resp. $r_{z\i(f_1)} > r_{f_1}, f_2^\dag$ ). Then there exists a morphism \[\D: \BA^1 \times \Sigma_z^{f_1, f_2} \to \Sigma_z^{f_1, f_2}\] such that \begin{itemize}
        \item the restriction morphisms $\D_\t = \D(\t, -): \Sigma_z^{f_1, f_2} \to \Sigma_z^{f_1, f_2}$ for $\t \in \BA^1$ are isomorphisms;

        \item the morphism $\D_\t$ coincides with the action of $(N_{\a_{f_1}^\vee, r_{f_1}}(\t), 1)$ (resp. $(1, N_{\a_{f_2}^\vee, r_{f_2}}(\t))$) or $(N_{\a_{z(f_2)}^\vee, r_{z(f_2)}}(\t), 1)$ (resp. $(1, N_{\a_{z\i(f_1)}^\vee, r_{z\i(f_1)}}(\t))$) on $\Sigma_z^{f_1, f_2}$ for $\t \in \BF_{q^N}$.
    \end{itemize}

    As a consequence, we have $H_c^*(\Sigma_z^{f_1, f_2}, \ov\BQ_\ell)[\phi \otimes \phi\i] = 0$.
\end{lemma}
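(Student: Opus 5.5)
We first derive the cohomological consequence from the two listed properties of $\D$. This is the standard deformation argument, as in \cite{Nie24} and \cite{LN26}. Since $\BA^1$ is connected and each $\D_\t$ is an automorphism of $\Sigma_z^{f_1, f_2}$, the induced map $\D_\t^*$ on $H_c^*(\Sigma_z^{f_1, f_2}, \ov\BQ_\ell)$ does not depend on $\t$. Hence $\D_\t^* = \D_0^*$, and this also holds on the virtual alternating sum. For $\t \in \BF_{q^N}$ we know $\D_\t$ is the action of $(N_{\l, m}(\t), 1)$, where $N_{\l, m}(\t) \in T^F$. Also $\D_0$ is the action of $N_{\l,m}(0) = 1$. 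So every element of the image $N_{\l, m}(\BF_{q^N}) \subseteq T_{r'}^F$ acts trivially on cohomology. On the $\phi \otimes \phi\i$-isotypic part, this element acts by the scalar $\phi(N_{\l, m}(\t))$. Now take $\l = \a_{f_1}^\vee$ and $m = r_{f_1}$, or the analogous choice in each of the other three cases. By the definition of $r_\a$ recorded in \S \ref{sec:filtration}, $\phi \circ N_F^{F^N} \circ \a^\vee(1 + \varpi^{r_\a}[\BF_{q^N}]) \neq \{1\}$, because $f_1 \in A'$ forces $r_{f_1} \ge 1$. So some scalar differs from $1$ while acting as the identity, and the isotypic part vanishes.

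The real work is constructing $\D$. We treat the first case, $z\i(f_1^\dag) > s_{z\i(f_1)}, f_2^\dag$, and handle the others by the symmetric substitutions: swap the roles of $(x_1, v_1, A)$ and $(x_2, v_2, B)$, and replace $z$ by $z\i$. Write a point of $\Sigma_z^{f_1, f_2}$ as $(x_1, x_2, x)$ with $x = v_1 \dz v_2\i$ in the coordinates of Lemma \ref{coordinate}, subject to $F(x_1 x x_2\i) = x$. The idea is to twist $x_1$ by a torus element $t_\t = \a_{f_1}^\vee(1 + \varpi^{r_{f_1}}[\t'])$, where $\t'$ is a suitable polynomial in $\t$ (and its Frobenius twists) chosen so that, for $\t\in\BF_{q^N}$, the twist telescopes to the norm $N_{\a_{f_1}^\vee, r_{f_1}}(\t)$. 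We then compensate this twist using the nonzero coordinate $(v_1)_{f_1}$.

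Concretely, we would set $\D_\t(x_1, x_2, x) = (t_\t x_1 F\i(\cdot)\cdots, \ x_2', \ t_\t x \cdots)$, built in three steps. First, conjugating $x$ by $u_{f_1^\dag}$-type elements produces $\a_{f_1}^\vee(1 + \varpi^{r_{f_1}}[\,\cdot\,(v_1)_{f_1}])$ modulo $\CI_\phi^\dag$, by Corollary \ref{commutator}. Dividing by the invertible coordinate $(v_1)_{f_1}$ then lets us realize an arbitrary torus factor along $\a_{f_1}^\vee$. Second, the hypothesis $z\i(f_1^\dag) > s_{z\i(f_1)}$ guarantees that the element $z\i u_{f_1^\dag} z$ lies in $\CI_\phi^\dag$, so the correction can be moved across $\dz$ and absorbed into $x_2$. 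Third, the hypothesis $z\i(f_1^\dag) > f_2^\dag$, together with the vanishing conditions defining the stratum and Lemma \ref{T-stable}, shows that the correction does not interact with the $f_2$-coordinate. It therefore preserves $\Sigma_z^{f_1, f_2}$.

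Invertibility of $\D_\t$ follows because $\D_{-\t}$, suitably defined, is an inverse. Equality with the $T$-action at $\BF_{q^N}$-points follows from the equation $F(x_1 x x_2\i) = x$: there the correction terms telescope into $N_F^{F^N}$. The main obstacle is the commutator bookkeeping in the first and third steps. We must verify that all error terms produced when moving $u_{f_1^\dag}$ past the product coordinates stay inside $\CI_\phi^\dag$ or inside higher coordinates, which by the stratum conditions do not matter. This is exactly where Lemma \ref{conjugate} and Corollary \ref{commutator} must be applied carefully, including for small $p$, where the sets are not closed under addition.
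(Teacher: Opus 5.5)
Your deduction of the cohomological vanishing from the two properties of $\D$ is right. Your construction under the hypothesis $z\i(f_1^\dag) > s_{z\i(f_1)}, f_2^\dag$ also matches the paper:
\begin{itemize}
\item the compensating element is $\d = u_{f_1^\dag}((v_1)_{f_1}\i(F^N(\t)-\t))$;
\item the condition $z\i(f_1^\dag) > s_{z\i(f_1)}$ puts ${}^{\dz\i}\d$ in $\CI_{\phi,r}^\dag$;
\item the condition $z\i(f_1^\dag) > f_2^\dag$ puts $[v_2,{}^{\dz\i}\d]$ in $\CI_{\phi,r}^\dag$, by Corollary \ref{commutator}.
\end{itemize}
Your swap $(x_1,v_1,A,z)\leftrightarrow(x_2,v_2,B,z\i)$ then gives the case $z(f_2^\dag) > s_{z(f_2)}, f_1^\dag$.

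The gap is that the lemma has two further hypotheses, $r_{z(f_2)} > r_{f_2}, f_1^\dag$ and $r_{z\i(f_1)} > r_{f_1}, f_2^\dag$, and your symmetry does not reach them: it exchanges these two with each other, not with the $s$-cases. Under $r_{z(f_2)} > r_{f_2}, f_1^\dag$ you know neither $z\i(f_1^\dag) > s_{z\i(f_1)}$ nor $z(f_2^\dag) > s_{z(f_2)}$. So no root-subgroup correction at $f_1^\dag$ or $f_2^\dag$ can be carried across $\dz$ into $\CI^\dag_{\phi}$, and your construction does not apply. These cases cannot be dropped. The proof of Proposition \ref{red} invokes them exactly when $r_{z(f_2)} > r_{f_2}$ (resp. $r_{z\i(f_1)} > r_{f_1}$), which is precisely when Lemma \ref{gedag} is unavailable.

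What is missing is a second kind of deformation that uses no nonzero coordinate at all. Take $t_1 = N_{\a_{z(f_2)}^\vee, r_{z(f_2)}}(\t)$, $t_2 = 1$ and $\d = L(t_1)$. Because the Lang map telescopes the norm, $\d$ is the single-coroot torus element $\a_{z(f_2)}^\vee\bigl((1+\varpi^{r_{z(f_2)}}[\t^{q^N}])(1+\varpi^{r_{z(f_2)}}[\t])\i\bigr)$. Then:
\begin{itemize}
\item ${}^{\dz\i}\d \in \a_{f_2}^\vee(1+\varpi^{r_{z(f_2)}}\COk) \subseteq \CT_{\phi,r}$, precisely because $r_{z(f_2)} > r_{f_2}$;
\item the first required membership is $v_1\d v_1\i L(t_1)\i = [v_1, L(t_1)] \in \CI_{\phi,r'}^\dag$, which follows from Corollary \ref{commutator} since $r_{z(f_2)} > f_1^\dag$;
\item the second is $v_2\,{}^{\dz\i}\d\,v_2\i = [v_2,{}^{\dz\i}\d]\,{}^{\dz\i}\d \in \CI_{\phi,r}^\dag$, which follows since $r_{z(f_2)} > r_{f_2} > f_2^\dag$;
\item for $\t \in \BF_{q^N}$ one has $\d = 1$, and $\D_\t$ is the action of $(N_{\a_{z(f_2)}^\vee, r_{z(f_2)}}(\t),1)$.
\end{itemize}
The fourth case follows from this one by your swap.

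Two smaller corrections to your $s$-case sketch:
\begin{itemize}
\item The twist must be the norm $t_1 = N_{\a_{f_1}^\vee, r_{f_1}}(\t)$ itself. It is $L(t_1)$, not $t_1$, that is a single-coroot element, congruent to $\a_{f_1}^\vee(1+\varpi^{r_{f_1}}[F^N(\t)-\t])$ modulo $\CT_\phi$, and this is what $[\d\i,v_1]$ cancels.
\item $\D_{-\t}$ should not be expected to be literally inverse to $\D_\t$. The paper shows that $\D^\flat_{-\t}\circ\D_\t$ is an invertible shear $(x_1,x_2,x)\mapsto(x_1\d_1,x_2\d_2,x)$, and argues with one-sided inverses.
\end{itemize}
Also, preservation of the stratum is automatic: the new third coordinate is a torus translate, and the strata are torus-stable by Lemma \ref{T-stable}. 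The inequality $z\i(f_1^\dag) > f_2^\dag$ is needed for $x_2' \in \CI_{\phi,r}^\dag$, not for staying in the stratum.
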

\begin{proof}
    Suppose the morphism $\D$ is defined in the following way \[(\t, (x_1, x_2, v_1 \dz v_2\i)) \mapsto (x_1', x_2', F(t_1) v_1 \dz v_2\i F(t_2\i)),\] where $x_1' \in \CI_{\phi, r'}^\dag$, $x_2' \in \CI_{\phi, r}^\dag$, $t_1 \in T_{r'}$ and $t_2 \in T_r$. In view of Lemma \ref{T-stable}, to prove $\D$ is well-defined we need to show that \[\tag{a} F(t_1) v_1 \dz v_2\i F(t_2\i) = F(x_1' F(t_1) v_1 \dz v_2\i F(t_2\i) {x_2'}\i).\] As $(x_1, x_2, v_1 \dz v_2\i) \in \Sigma_z$, we have \[ F(t_1) v_1 \dz v_2\i F(t_2\i) = F(t_1) F(x_1 v_1 \dz v_2\i x_2\i)  F(t_2\i) = F(t_1 x_1 v_1 \dz v_2\i x_2\i t_2\i).\] Thus (a) is equivalent to \[\tag{b} x_1' F(t_1) v_1 \dz v_2\i F(t_2\i) {x_2'}\i = t_1 x_1 v_1 \dz v_2\i x_2\i t_2\i.\] In particular, (b) follows from the following two equalities \[x_1' F(t_1) v_1 = t_1 x_1 v_1 \d; \quad\ x_2' F(t_2) v_2 = t_2 x_2 v_2 {}^{\dz\i} \d,\] where $\d \in G_{\bx, 0}$ is a suitable element only depending on $\t$ and $v_1 \dz v_2\i$. Then it is sufficient to show the inclusions \[\tag{c} x_1' = {}^{t_1}(x_1 v_1 \d v_1\i L(t_1)\i) \in \CI_{\phi, r'}^\dag;\quad x_2' = {}^{t_2}(x_2 v_2 {}^{\dz\i}\d v_2\i L(t_2)\i) \in \CI_{\phi, r}^\dag,\] that is, \[\tag{d}  v_1 \d v_1\i L(t_1)\i \in \CI_{\phi, r'}^\dag;\quad v_2 {}^{\dz\i}\d v_2\i L(t_2)\i \in \CI_{\phi, r}^\dag.\] Moreover, by (c) we have \[\tag{e} \text{ if $\d = 1$ and $t_1, t_2$ are $F$-fixed for $\t \in \BF_{q^N}$, then $\D_\t$ coincides with the action of $(t_1, t_2)$.}\]

    Assume $z\i(f_1^\dag) > s_{z\i(f_1)}, f_2^\dag$. Let $\d= u_{f_1^\dag}((v_1)_{f_1}\i (F^N(\t) - \t))$, $t_1 = N_{\a_{f_1}^\vee, r_{f_1}}(\t)$ and $t_2 = 1$. Then $\d \in \CI_{\phi, r'}^\dag$ and ${}^{\dz\i} \d \in \CI_{\phi, r}^\dag$ since $f_1^\dag > f_1$ and $z\i(f_1^\dag) > s_{z\i(f_1)}$ respectively. As $z\i(f_1^\dag) > f_2^\dag$, it follows from Corollary \ref{commutator} that \[v_2 {}^{\dz\i} \d v_2\i L(t_2)\i = [v_2, {}^{\dz\i} \d] {}^{\dz\i} \d  \in \CI_{\phi, r}^\dag\] and \[v_1 \d v_1\i L(t_1)\i = \d [\d\i, v_1] L(t_1)\i \in \a_{f_1}^\vee(1 + \varpi^{r_{f_1}}[F^N(\t) - \t]) L(t_1)\i \CI_{\phi, r'}^\dag \subseteq \CI_{\phi, r'}^\dag.\] Hence (d) is satisfied and $\D$ is well-defined. Note that $\d = 1$ and $(t_1, t_2) = (N_{\a_{f_1}^\vee, r_{f_1}}(\t), 1)$ for $\t \in \BF_{q^N}$. The second property of $\D$ follows from (e). It remains to show $\D_\t$ is invertible for $\t \in \BA^1$. Indeed, we can define another morphism \[\D^\flat: \BA^1 \times \Sigma_z^{f_1, f_2} \to \Sigma_z^{f_1, f_2}\] in the above way by taking $\d= u_{f_1^\dag}((v_1)_{f_1}\i (F^N(\t) - \t))$, $t_1 = N_{\a_{f_1}^\vee, r_{f_1}}(-\t)\i$ and $t_2 = 1$. By (c), the morphism $\D^\flat_{-\t} \circ \D_\t$ is given by \[(x_1, x_2, v_1 \dz v_2\i) \mapsto (x_1 \d_1, x_2 \d_2, v_1 \dz v_2\i),\] where $\d_1 \in \CI_{\phi, r'}^\dag$ and $\d_2 \in \CI_{\phi, r}^\dag$ are certain elements only depending on $\t$ and $v_1 \dz v_2\i$. By definition, we have \[F(x_1 \d_1 v_1 \dz v_2\i \d_2\i x_2\i) = v_1 \dz v_2\i = F(x_1 v_1 \dz v_2\i x_2\i).\] In particular, $\d_1 v_1 \dz v_2\i \d_2\i = v_1 \dz v_2\i$. Then the map \[(x_1, x_2, v_1 \dz v_2\i) \mapsto (x_1 \d_1\i, x_2 \d_2\i, v_1 \dz v_2\i)\] is the inverse of $\D_{-\t}^\flat \circ \D_\t$. Hence $\D_\t$ has a left inverse. By symmetry, $\D_\t \circ \D_{-\t}^\flat$ is invertible and hence $\D_\t$ has a right inverse. Thus $\D_\t$ is invertible as desired. As in the proof of \cite[Proposition 6.4]{DL76}, the induced action of $\D_\t$ on each $H_c^i(\Sigma_z^{f_1, f_2})$ is trivial for $\t \in \BA^1$. On the other hand, by definition $\phi \otimes \phi\i$ is non-trivial over $(N_{\a_{f_1}^\vee, r_{f_1}}(\BF_{q^N}), 1)$. Thus $H_c^i(\Sigma_z^{f_1, f_2}, \ov\BQ_\ell)[\phi \otimes \phi\i] = 0$ as desired.

    Assume $z(f_2^\dag) > s_{z(f_2)}, f_1^\dag$. We take $\d = {}^\dz u_{f_2^\dag}((v_2)_{f_2}\i(F^N(\t) - \t))$, $t_1 = 1$ and $t_2 = N_{\a_{f_2}^\vee, r_{f_2}}(\t)$. Then ${}^{\dz\i}\d \in \CI_{\phi, r}^\dag$ and $\d \in \CI_{\phi, r'}^\dag$ since $f_2^\dag > f_2$ and $z(f_2^\dag) > s_{z(f_2)}$ respectively. As $z(f_2^\dag) > f_1^\dag$, it follows from Corollary \ref{commutator} that \[v_1 \d v_1\i L(t_1)\i = [v_1, \d] \d \in \CI_{\phi, r'}^\dag\] and \[v_2 {}^{\dz\i}\d v_2\i L(t_2)\i = {}^{\dz\i}\d [{}^{\dz\i}\d\i, v_2] L(t_2)\i \in \a_{f_2}^\vee(1 + \varpi^{r_{f_2}}[F^N(\t) - \t]) L(t_2)\i \CI_{\phi, r}^\dag \subseteq \CI_{\phi,r}^\dag.\] The remaining arguments follow as in the previous case.

    Assume $r_{z(f_2)} > r_{f_2}, f_1^\dag$ (resp.  $r_{z\i(f_1)} > r_{f_1}, f_2^\dag$). We take $(t_1, t_2, \d) = (N_{\a_{z(f_2)}^\vee, r_{z(f_2)}}(\t), 1, L(t_1))$ (resp. $(t_1, t_2, \d) = (1, N_{\a_{z\i(f_1)}^\vee, r_{z\i(f_1)}}(\t), {}^\dz L(t_2))$). As $r_{z(f_2)} > r_{f_2}$ (resp. $r_{z\i(f_1)} > r_{f_1}$), we have ${}^{\dz\i}\d \in \a_{f_2}^\vee(1 + \varpi^{r_{z(f_2)}} \CO_\brk) \subseteq \CT_{\phi, r}$ (resp. $\d \in \a_{f_1}^\vee(1 + \varpi^{r_{z\i(f_1)}} \CO_\brk) \subseteq \CT_{\phi, r'}$). As $r_{z(f_2)} > f_1^\dag$ (resp. $r_{z\i(f_1)} > f_2^\dag$), it follows by Corollary \ref{commutator} that \[\text{ $v_1 \d v_1\i L(t_1)\i = [v_1, L(t_1)] \in \CI_{\phi, r'}^\dag$ (resp. $v_2 {}^{\dz\i}\d v_2\i L(t_2)\i = [v_2, L(t_2)] \in \CI_{\phi, r}^\dag$) }.\] Moreover, as $r_{z(f_2)} > r_{f_2} > f_2^\dag$ (resp. $r_{z\i(f_1)} > r_{f_1} > f_1^\dag$), by Corollary \ref{commutator} we have \[\text{$v_2 {}^{\dz\i}\d v_2\i L(t_2)\i =  [v_2, {}^{\dz\i}\d] {}^{\dz\i}\d \in \CI_{\phi, r}^\dag$ (resp. $v_1 \d v_1\i L(t_1)\i = [v_1, \d] \d \in \CI_{\phi, r'}^\dag$)}.\] The remaining arguments follow as in the previous cases.  
\end{proof}

The following result is proved in the same way as Lemma \ref{deform}.
\begin{lemma} \label{deform-star}
    Let $f_1\in A'$ and $f_2 \in B'$. Assume $z\i(f_1^\dag) > s_{z\i(f_1)}$ (resp. $z(f_2^\dag) > s_{z(f_2)}$) or $r_{z\i(f_1)} > r_{f_1}$ (resp. $r_{z(f_2)} > r_{f_2}$). Then there exists a morphism \[\text{$\D: \BA^1 \times \Sigma_z^{f_1, *} \to \Sigma_z^{f_1,*}$ (resp. $\D: \BA^1 \times \Sigma_z^{*, f_2} \to \Sigma_z^{*, f_2}$) }\] such that \begin{itemize}
        \item the restriction morphisms $\D_\t = \D(\t, -): \Sigma_z^{f_1, *} \to \Sigma_z^{f_1, *}$ (resp. $\D_\t = \D(\t, -): \Sigma_z^{*, f_2} \to \Sigma_z^{*, f_2}$) for $\t \in \BA^1$ are isomorphisms;

        \item the morphism $\D_\t$ coincides with the action of $(N_{\a_{f_1}^\vee, r_{f_1}}(\t), 1)$ (resp. $(1, N_{\a_{f_2}^\vee, r_{f_2}}(\t))$) or $(1, N_{\a_{z\i(f_1)}^\vee, r_{z\i(f_1)}}(\t))$ (resp. $(N_{\a_{z(f_2)}^\vee, r_{z(f_2)}}(\t), 1)$) on $\Sigma_z^{f_1, *}$ (resp. $\Sigma_z^{*, f_2}$) for $\t \in \BF_{q^N}$.
    \end{itemize}

    In particular, $H_c^*(\Sigma_z^{f_1, *}, \ov\BQ_\ell)[\phi \otimes \phi\i] = 0$ (resp. $H_c^*(\Sigma_z^{*, f_2}, \ov\BQ_\ell)[\phi \otimes \phi\i] = 0$).
\end{lemma}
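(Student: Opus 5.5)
The plan is to repeat the construction in the proof of Lemma \ref{deform}, now on the strata $\Sigma_z^{f_1, *}$ and $\Sigma_z^{*, f_2}$. By symmetry we only treat $\Sigma_z^{f_1, *}$. We look for $\D$ of the form
\[(\t, (x_1, x_2, v_1 \dz v_2\i)) \mapsto (x_1', x_2', F(t_1) v_1 \dz v_2\i F(t_2\i)).\]
By Lemma \ref{T-stable}, the stratum $(\CI z \CI / G_{\bx, r+})^{f_1, *}$ is stable under $T_{r'} \times T_r$. As in that proof, well-definedness therefore reduces to condition (d) there:
\[v_1 \d v_1\i L(t_1)\i \in \CI_{\phi, r'}^\dag, \qquad v_2 {}^{\dz\i}\d v_2\i L(t_2)\i \in \CI_{\phi, r}^\dag,\]
for a suitable $\d$ depending only on $\t$ and $v_1 \dz v_2\i$. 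The second property of $\D$ then follows from observation (e) there, since $\d = 1$ and $t_1, t_2$ are $F$-fixed when $\t \in \BF_{q^N}$. Invertibility of each $\D_\t$ is proved verbatim with the auxiliary map $\D^\flat$. The vanishing of the $\phi\otimes\phi\i$-isotypic part then follows as in \cite[Proposition 6.4]{DL76}, because $\phi$ is nontrivial on $N_{\a^\vee, r_\a}(\BF_{q^N})$ whenever $r_\a \ge 1$.

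In the first case, $z\i(f_1^\dag) > s_{z\i(f_1)}$, take $\d = u_{f_1^\dag}((v_1)_{f_1}\i(F^N(\t) - \t))$, $t_1 = N_{\a_{f_1}^\vee, r_{f_1}}(\t)$ and $t_2 = 1$. The first inclusion is proved exactly as before: Corollary \ref{commutator} applies with $h = f_1^\dag$, since on this stratum all coordinates of $v_1$ at $h_1 \in A'$ with $h_1^\dag > f_1^\dag$ vanish. The leading torus term $\a_{f_1}^\vee(1 + \varpi^{r_{f_1}}[F^N(\t) - \t])$ is cancelled by $L(t_1)\i$ modulo $\CT_\phi$.

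For the second inclusion, note that ${}^{\dz\i}\d \in \CI_{\phi, r}^\dag$ because $z\i(f_1^\dag) > s_{z\i(f_1)}$. Previously the hypothesis $z\i(f_1^\dag) > f_2^\dag$ was used to kill $[v_2, {}^{\dz\i}\d]$. Here that hypothesis is absent, and it is not needed. On $\Sigma_z^{f_1,*}$ every coordinate of $v_2$ at roots in $B'$ vanishes, so $v_2$ is a product of $u_f$ with $f \in B$ and $f > f^\dag$, that is, $v_2 \in \CI_{\phi, r}^\dag$. Hence $v_2\, {}^{\dz\i}\d\, v_2\i \in \CI_{\phi, r}^\dag$ by Proposition \ref{I-group}.

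In the second case, $r_{z\i(f_1)} > r_{f_1}$, take
\[(t_1, t_2, \d) = (1, N_{\a_{z\i(f_1)}^\vee, r_{z\i(f_1)}}(\t), {}^\dz L(t_2)).\]
Then $\d \in \a_{f_1}^\vee(1+\varpi^{r_{z\i(f_1)}}\COk) \subseteq \CT_{\phi, r'}$, because $r_{z\i(f_1)} \ge r_{f_1}+1$.
\begin{itemize}
\item The first inclusion becomes $[v_1, \d]\d \in \CI_{\phi, r'}^\dag$. This follows from Corollary \ref{commutator}, since $r_{z\i(f_1)} > r_{f_1} > f_1^\dag$ and $v_1$ has no coordinates at $h_1 \in A'$ with $h_1^\dag > f_1^\dag$.
\item The second inclusion becomes $[v_2, L(t_2)]$, which lies in $\CI_{\phi, r}^\dag$ because $v_2 \in \CI_{\phi, r}^\dag$ and $L(t_2) \in T_r$ normalizes $\CI_{\phi, r}^\dag$.
\end{itemize}
The case of $\Sigma_z^{*, f_2}$ is symmetric: exchange the roles of $v_1$ and $v_2$, of $z$ and $z\i$, and use $v_1 \in \CI_{\phi, r'}^\dag$.

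The main point to check is the claim that on $\Sigma_z^{f_1,*}$ the factor $v_2$, respectively $v_1$ on $\Sigma_z^{*, f_2}$, lies in $\CI_\phi^\dag$. This relies on the ordered-product description in Lemma \ref{coordinate}, together with the fact that the ordered product $\prod_{f \in B,\ f > f^\dag} G_r^f$ lies in $\CI_{\phi,r}^\dag$. The latter should follow from Lemma \ref{conjugate} and the subgroup property of Proposition \ref{I-group}.
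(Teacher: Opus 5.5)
Your proposal is correct and is essentially the paper's approach. The paper only says the lemma is proved in the same way as Lemma \ref{deform}, and your choices of $(t_1,t_2,\delta)$ in the two cases are exactly the ones used there.

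The one point the paper leaves implicit is the one you isolate. On $\Sigma_z^{f_1,*}$ (resp.\ $\Sigma_z^{*,f_2}$) the coordinates of $v_2$ (resp.\ $v_1$) vanish on all of $B'$ (resp.\ $A'$). Hence $v_2 \in G_r^{B \sm B'} \subseteq \CI_{\phi,r}^\dag$ (resp.\ $v_1 \in G_{r'}^{A \sm A'} \subseteq \CI_{\phi,r'}^\dag$). This follows at once from Proposition \ref{I-group}, since each factor $G^f$ with $f > f^\dag$ lies in $\CI_\phi^\dag$; Lemma \ref{conjugate} is not needed. It is exactly what makes the comparisons with $f_2^\dag$ (resp.\ $f_1^\dag$) from Lemma \ref{deform} unnecessary.
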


%\begin{proposition} \label{r-function}Let $f_1\in A'$ and $f_2 \in B'$. (1) if $f_1^\dag(\bx) \ge f_2^\dag(\bx)$ (resp. $f_1^\dag(\bx) \le f_2^\dag(\bx)$), then $H_c^*(\Sigma_z^{f_1, f_2}, \ov\BQ_\ell)[\phi \otimes \phi\i] = 0$ unless $r_{f_1} \ge r_{z\i(f_1)}$ (resp. $r_{f_2} \ge r_{z(f_2)}$);(2) $H_c^*(\Sigma_z^{f_1, *}, \ov\BQ_\ell)[\phi \otimes \phi\i] = 0$ (resp. $H_c^*(\Sigma_z^{*, f_2}, \ov\BQ_\ell)[\phi \otimes \phi\i] = 0$) unless $r_{f_1} \ge r_{z\i(f_1)}$ (resp. $r_{f_2} \ge r_{z(f_2)}$).\end{proposition}
%\begin{proof}We only show the first statement, then second one follows in a similar way by using Lemma \ref{deform} instead of Lemma \ref{deform-star}.Assume $f_1^\dag(\bx) \ge f_2^\dag(\bx)$ (resp. $f_2^\dag(\bx) \ge f_1^\dag(\bx)$) and $r_{z\i(f_1)} > r_{f_1}$ (resp. $r_{z(f_2)} > r_{f_2}$). As $r_{z\i(f_1)} > r_{f_1} \ge f_1^\dag(\bx) \ge f_2^\dag(\bx)$ (resp. $r_{z(f_2)} > r_{f_2} \ge f_2^\dag(\bx) \ge f_1^\dag(\bx)$), we have $r_{z\i(f_1)} > f_2^\dag(\bx)$ (resp. $r_{z(f_2)} > f_1^\dag(\bx)$). Applying Lemma \ref{deform}, there exists a family of automorphisms $\D_\t$ of $\Sigma_z^{f_1, f_2}$ for $\t \in \BA^1$. Moreover, $\D_\t$ which coincides with that action of $(N_{\a_{z\i(f_1)}, r_{z\i(f_1)}}(\t), 1)$ (resp. $(1, N_{\a_{z(f_2)}, r_{z(f_2)}}(\t))$) when $\t \in \BF_{q^N}$. By definition, $\phi$ is non-trivial over $(N_{\a_{z\i(f_1)}, r_{z\i(f_1)}}(\BF_{q^N})$ (resp. $N_{\a_{z(f_2)}, r_{z(f_2)}}(\BF_{q^N}))$) by definition. Thus we deduce that $H_c^i(\Sigma_z^{f_1, f_2}, \ov\BQ_\ell)[\phi \otimes \phi\i] = 0$ as desired. \end{proof}

\subsection{End of the proof}

Now we are ready to prove  Proposition \ref{red}.
\begin{proof}
    We only prove that $H_c^*(\Sigma_z^{f_1, f_2}, \ov\BQ_\ell)[\phi \otimes \phi\i] = 0$. The other two vanishing statements $H_c^*(\Sigma_z^{f_1, *}, \ov\BQ_\ell)[\phi \otimes \phi\i] = H_c^*(\Sigma_z^{*, f_2}, \ov\BQ_\ell)[\phi \otimes \phi\i] =0$ are proved similarly. By Lemma \ref{value} and the definitions of $A$ and $B$, we have $z(f_2^\dag)(\bx) \ge f_2^\dag(\bx)$ and $z\i(f_1^\dag)(\bx) \ge f_1^\dag(\bx)$.

    Case (1): either $f_1^\dag(\bx) < f_2^\dag(\bx)$ or $f_1^\dag(\bx) = f_2^\dag(\bx)$ and $z(f_2^\dag)(\bx) > f_2^\dag(\bx)$. Hence $z(f_2^\dag)(\bx) > f_1^\dag(\bx)$ and $z(f_2^\dag) > f_1^\dag$. If $r_{z(f_2)} > r_{f_2}$, then $r_{z(f_2)} > f_1^\dag$ (since $r_{z(f_2)} > r_{f_2} \ge f_2^\dag(\bx) \ge f_1^\dag(\bx)$), and the statement follows by Lemma \ref{deform}. So we can assume that $r_{f_2} \ge r_{z(f_2)}$. By Lemma \ref{gedag} we have $z(f_2^\dag) > s_{z(f_2)}$. The statement also follows by Lemma \ref{deform}.

    Case (2): either $f_1^\dag(\bx) > f_2^\dag(\bx)$ or $f_1^\dag(\bx) = f_2^\dag(\bx)$ and $z\i(f_1^\dag)(\bx) > f_1^\dag(\bx)$. Hence $z\i(f_1^\dag)(\bx) > f_2^\dag(\bx)$ and $z\i(f_1^\dag) > f_2^\dag$. If $r_{z\i(f_1)} > r_{f_1}$, then $r_{z\i(f_1)} > f_2^\dag$ (since $r_{z\i(f_1)} > r_{f_1} \ge f_1^\dag(\bx) \ge f_2^\dag(\bx)$), and the statement follows by Lemma \ref{deform}. So we can assume that $r_{f_1} \ge r_{z\i(f_1)}$. By Lemma \ref{gedag} we have $z\i(f_1^\dag) > s_{z\i(f_1)}$. The statement also follows by Lemma \ref{deform}.

    Case (3): $z\i(f_1^\dag)(\bx) = f_1^\dag(\bx) = f_2^\dag(\bx) = z(f_2^\dag)(\bx)$. By Lemma \ref{value}, we have $z\i(f_1)(\bx) = f_1(\bx)$, which means  $\a_{f_1} > 0$ by the definition of $A$. Similarly we have $z(\a_{f_2}) < 0$. Therefore, $\a_{z(f_2^\dag)} > 0 > \a_{f_1^\dag}$ and hence $z(f_2^\dag) > f_1^\dag$. If $r_{f_2} < r_{z(f_2)}$, then $r_{z(f_2)} > f_1^\dag$ (since $r_{z(f_2)} > r_{f_2} \ge f_2^\dag(\bx) = f_1^\dag(\bx)$) and the statement follows from Lemma \ref{deform}. So we can assume that $r_{f_2} \ge r_{z(f_2)}$. By Lemma \ref{gedag} we have $z(f_2^\dag) > s_{z(f_2)}$. The statement follows again by Lemma \ref{deform}.
\end{proof}

\section{The dimension formula} \label{sec:dim-inter}
Let the notation be as in previous sections. In this section, we deduce a dimension formula for the intertwining space $I_w(\CR_{T_r}^{G_r}(\phi))$ for $w \in G^F$. We recall the triviality condition \[\tag{$\dag$} \text{$\phi$ is trivial over $\CT_\phi(\CO_k)$}.\]

\subsection{The set $\tSigma_z^0$}
Consider the following two subsets.
\begin{align*}
    A'' &=\{f \in \tPhi_{\re, r'}^{>0}; \text{ $f > f^\dag$, either $z\i(f)(\bx) < f(\bx)$ or $z\i(f)(\bx) = f(\bx)$ and $\a_f > 0$}\}; \\ B'' &=\{f \in \tPhi_{\re, r}^{>0}; \text{ $f > f^\dag$, either $z(f)(\bx) < f(\bx)$ or $z(f)(\bx) = f(\bx)$ and $\a_{z(f)} < 0$}\};
\end{align*} 

\begin{lemma} \label{I-group-1}
    The subsets $G_{r'}^{A''} \subseteq \CI_{\phi, r'}^\dag$ and $G_r^{B''} \subseteq \CI_{\phi, r}^\dag$ are subgroups. 
\end{lemma}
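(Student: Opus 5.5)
We need to show two things: $A''$ and $B''$ index subgroups of the images of the root subgroups, and the resulting products lie in $\CI_{\phi,r'}^\dag$ and $\CI_{\phi,r}^\dag$. The containment is immediate. Every $f \in A''$ (resp. $B''$) is a positive real affine root with $f > f^\dag$, so $G^f_{r'}$ (resp. $G^f_r$) is one of the factors defining $\CI_{\phi}^\dag$. The product is taken in the same linear order $\le$, and $\CI_\phi^\dag$ is a group by Proposition \ref{I-group}, so each ordered product lands in it. The content is closure under multiplication. I will treat $A''$ in detail; $B''$ is the same argument with $z$ replaced by $z\i$ and the tie-breaking condition $\a_f>0$ replaced by $\a_{z(f)}<0$.

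Write $A'' = A_0 \cap \{f; f > f^\dag\}$, where
\[A_0 = \{f \in \tPhi_{\re, r'}^{>0};\ \text{either } z\i(f)(\bx) < f(\bx) \text{ or } z\i(f)(\bx) = f(\bx) \text{ and } \a_f > 0\}.\]
By Lemma \ref{value}, $z\i(f)(\bx) - f(\bx) = \a_f(z(\bx) - \bx)$. So $A_0$ consists of the positive affine roots $f$ for which the pair $(\a_f(z(\bx)-\bx), \a_f)$ is ``negative'' in the lexicographic sense: first by the sign of $\a_f(z(\bx)-\bx)$, then by positivity of $\a_f$. This condition is additive. If $\a_{f},\a_{f'}$ both satisfy it and $i,j\ge1$, then $i\a_f + j\a_{f'}$ satisfies it as well, since both strictly-negative and zero-with-positive-root cases are stable under positive combinations. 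Adding $m \ge 0$ to an affine root does not change $\a$, so every $f'' = if + jf' + m_{f,f',i,j}$ occurring in Corollary \ref{comm} again satisfies the $z$-condition. Positivity of $f''$ is clear, and the truncation level $r'$ only discards terms that vanish in $G_{r'}$. I also need that $\a_f + \a_{f'} \neq 0$ whenever $f,f' \in A_0$. This holds because $\a$ and $-\a$ cannot both satisfy the strict lexicographic condition.

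Next I check the condition $f'' > (f'')^\dag$, reusing the computation in Lemma \ref{conjugate}. When $f, f' > f^\dag, (f')^\dag$, Corollary \ref{comm} gives either $s_{f''} \le m$, so $f''(\bx) > s_{f''}$, or
\[(f'')^\dag \le \max\{r_f,r_{f'}\} - f - f' < \max\{f^\dag, (f')^\dag\} < f''.\]
The point that needs care is that Lemma \ref{conjugate} in general allows commutators to produce factors in $\CT_\phi$. Those arise only in the case $\a_f + \a_{f'} = 0$, which was excluded above. So $[G^f, G^{f'}]$ lies in a product of $G^{f''}$ with $f'' \in A''$ and $f'' \ge f+f'$.

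With this closure in hand, I run the same descending induction as in Proposition \ref{I-group}. For $f\in A''$, set $G^{A'', \ge f} = \prod_{f \le h \in A''} G^h$. By induction this is a group normalized by each $G^f$, since commutators land in strictly larger members of $A''$. Hence $G_{r'}^{A''}$ is a subgroup; the induction terminates because only finitely many $h$ survive the truncation at level $r'$. The main obstacle is the closure step: verifying that the lexicographic tie-breaking on $\a_f$ really is additive, and that no torus contribution arises.
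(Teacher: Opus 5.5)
Your proof is correct, but it takes a different route from the paper's.

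\textbf{The paper's route.} The paper does no commutator calculus here. It notes that the set $\Psi$ of roots $\a$ with $\a(z(\bx)-\bx)<0$, or with $\a(z(\bx)-\bx)=0$ and $\a>0$, is closed under addition. Hence $V=\prod_{\a\in\Psi}G^\a$ is a unipotent subgroup. Using Lemma \ref{value}, it then identifies $G_{r'}^{A''}=\CI_{\phi,r'}^\dag\cap V_{r'}$, an intersection of two subgroups; the first is a group by Proposition \ref{I-group}.

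\textbf{Your route.} Your additivity of the $z$-condition is exactly the paper's closedness of $\Psi$. You then feed it into Corollary \ref{comm} and rerun the descending induction of Proposition \ref{I-group} on the smaller index set $A''$. The observation $\Psi\cap-\Psi=\emptyset$ rules out the $\CT_\phi$-factors that Lemma \ref{conjugate} would otherwise allow.

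\textbf{Comparison.} The paper's argument is shorter and uses Proposition \ref{I-group} as a black box. It does, however, tacitly use a uniqueness-of-coordinates statement: to know that $\CI_{\phi,r'}^\dag\cap V_{r'}$ is exactly the ordered product over $A''$, one needs that such an element has no torus component and no coordinates at $f$ with $\a_f\notin\Psi$. Your argument avoids that identification and is self-contained. The cost is repeating the estimates of Lemma \ref{conjugate} and the inductive step.

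\textbf{A small imprecision.} In the case $s_{f''}\le m$ you only get $f''(\bx)\ge s_{f''}$, not strict inequality. Equality can occur: in equal characteristic with $f(\bx)=f'(\bx)=0$ one has $m=0=s_{f''}$. In that case the conclusion $f''>(f'')^\dag$ comes instead from $\a_{f''}=i\a_f+j\a_{f'}>0$. This holds because $f(\bx)=0$ together with $f>0$ forces $\a_f>0$, and likewise for $f'$.
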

\begin{proof}
    Let $\Psi$ be the set of roots $\a \in \Phi$ such that either $\a(z(\bx) - \bx) < 0$ or $\a(z(\bx) - \bx) = 0$ and $\a > 0$. By definition we have \[\tag{a} \text{ $\a + \b \in \Psi$ for any $\a, \b \in \Psi$ such that $\a + \b \in \Phi$}.\] By (a) the product $V := \prod_{\a \in \Psi} G^\a \subseteq G$ is a unipotent subgroup. By Lemma \ref{value} and Proposition \ref{I-group}, we deduce that $G_{r'}^{A''} = \CI_{\phi, r'}^\dag \cap V_{r'}$ is a subgroup of $G_{r'}$. The statement for $G_r^{B''}$ follows in a similar way.    
\end{proof}

We define \[\tSigma_z^0 = \{(x_1, x_2, v_1, v_2, \dz) \in \CI_{\phi, r'}^\dag \times \CI_{\phi, r}^\dag \times G_{r'}^{A''} \times G_r^{B''} \times zT_r; F(x_1 v_1 \dz v_2\i x_2\i) = v_1 \dz v_2\i \},\] on which $T_{r'}^F \times T_r^F$ acts by \[(t_1, t_2): (x_1, x_2, v_1, v_2, \dz) \mapsto (t_1 x_1 t_1\i, t_2 x_2 t_2\i, t_1 v_1 t_1\i, t_2 v_2 t_2\i, t_1 \dz t_2\i).\]

\begin{lemma} \label{fibration}
    There is a $T_{r'}^F \times T_r^F$-equivariant smooth morphism with affine space fibers  \[\th: \tSigma_z^0 \to \Sigma_z^0,\quad\ (x_1, x_2, v_1, v_2, \dz) \mapsto (x_1, x_2, v_1 \dz v_2\i).\] In particular, $H_c^*(\Sigma_z^0, \ov\BQ_\ell)[\phi \otimes \phi\i] = H_c^*(\tSigma_z^0, \ov\BQ_\ell)[\phi \otimes \phi\i]$.
\end{lemma}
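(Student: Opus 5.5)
The plan is to show that $\th$ is, up to a linear change of coordinates, the projection of a product onto one factor; the only real work is identifying the fibers. Write $\Sigma_z^0$ inside $\CI_{\phi, r'}^\dag \times \CI_{\phi, r}^\dag \times (\CI z \CI / G_{\bx, r+})^0$. By Lemma \ref{coordinate}, every element of $\CI z \CI / G_{\bx, r+}$ is uniquely $v_1 \dz v_2\i$ with $v_1 \in G_{r'}^A$, $\dz \in zT_r$, $v_2 \in G_r^B$. Membership in the stratum $(\cdot)^0$ means $(v_1)_{h_1} = 0$ for all $h_1 \in A'$ and $(v_2)_{h_2} = 0$ for all $h_2 \in B'$. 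So the only nonzero coordinates of $v_1$ (resp. $v_2$) are indexed by $A \sm A'$ (resp. $B \sm B'$), i.e. by roots with $f \ge f^\dag$.

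The first step is to compare these index sets with $A''$ and $B''$. Away from the boundary case $f = f^\dag$ (which I expect never to occur, since $f \in \tPhi_\re$ and $\a_f \ne \a_{f^\dag}$), $A \sm A'$ consists of the $f \in A$ with $f > f^\dag$. These differ from $A''$ only in the range condition: $z\i(f)(\bx) \le r$ versus $f(\bx) \le r'$. One checks, by the choice $r' \gg r$ and Lemma \ref{value}, that every root in $A''$ with $z\i(f)(\bx) > r$ has $G^f$ acting trivially in the relevant quotient. The cleaner route is to compare the two parametrizations directly, as follows.

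Second step: define the map $\tSigma_z^0 \to \Sigma_z^0$. Given $(x_1,x_2,v_1,v_2,\dz)$ with $v_1 \in G_{r'}^{A''}$ and $v_2 \in G_r^{B''}$, the element $v_1 \dz v_2\i$ lies in $\CI z \CI/G_{\bx,r+}$. We must show it lands in the stratum $(\cdot)^0$. To do this, rewrite $v_1 = v_1' u_1$ with $v_1' \in G^{A \sm A'}$ and $u_1$ a product of root subgroups $G^f$, $f \in A'' \sm A$. For these $f$ we have $z\i(f)(\bx) > r$, so ${}^{\dz\i}u_1 \in G_{\bx, r+}$ and $u_1$ can be absorbed. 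Do the same on the $v_2$ side: roots of $B''$ lie in $B$ automatically. Then invoke Lemma \ref{T-stable}-type closure (the sets $A\sm A'$ are closed under the commutator relations of Corollary \ref{comm}) to see that reordering produces no coordinates in $A'$ or $B'$.

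Third step: identify the fibers. The fiber of $\th$ over $(x_1,x_2,v_1\dz v_2\i)$ is the set of factorizations $(v_1,v_2,\dz)$. By the uniqueness in Lemma \ref{coordinate}, this is identified with $G_{r'}^{A'' \sm (A\sm A')}$, up to the ambiguity of moving factors with $z\i(f)(\bx) > r$ across $\dz$. This set is an affine space, and the factorization depends algebraically on the point, giving a trivial affine-bundle structure. Equivariance under $T_{r'}^F \times T_r^F$ is immediate from the formulas, since conjugation by $t_i$ preserves each $G^f$. Since the fibers are affine spaces $\BA^n$, the map induces $H_c^*(\tSigma_z^0) \cong H_c^*(\Sigma_z^0)[-2n](-n)$ equivariantly, and the Euler-characteristic isotypic parts agree.

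The main obstacle is the second step: checking that the product $v_1 \dz v_2\i$ with $v_i$ in the groups of Lemma \ref{I-group-1} has vanishing $A'$, $B'$ coordinates after normalization. This needs the commutator estimates of Lemma \ref{conjugate} to control the reordering, together with a careful verification that the extra roots in $A''$ are exactly those killed modulo $G_{\bx,r+}$ after conjugation by $\dz$.
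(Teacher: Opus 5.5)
Your proposal is correct and follows the paper's proof. You split $v_1=v_1'v_1''$ along $A''\cap A=A\sm A'$ and $A''\sm A=\{f\in A''; z\i(f)(\bx)>r\}$, absorb $v_1''$ into $G_{\bx,r+}$ after moving it across $\dz$, use $B''=B\sm B'$, and read off the fiber $G_{r'}^{A''\sm A}$ from the uniqueness in Lemma \ref{coordinate}. One small correction: $A\sm A'$ itself need not be closed under the commutator relations, since sums can leave $A$ by having $z\i(\cdot)(\bx)>r$. What controls the reordering is that $G_{r'}^{A''}$ is a group (Lemma \ref{I-group-1}) with $A''\cap A'=\emptyset$; any stray commutator terms land in $A''\sm A$ and are absorbed.
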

\begin{proof}
    Note that $B'' = B \sm B'$. By Lemma \ref{I-group-1}, $G_{r'}^{A''} \subseteq \CI_{\phi, r'}^\dag$ is a subgroup. In particular, $G_{r'}^{A''} = G_{r'}^{A'' \cap A} G_{r'}^{A'' \sm A}$. Note that $A'' \cap A = A \sm A'$ and $A'' \sm A = \{f \in A''; z\i(f)(\bx) > r\}$. Hence for $v_2 \in G_r^{B''} = G_r^{B \sm B'}$ and $v_1 = v_1' v_1''$ with $v_1' \in G_{r'}^{A'' \cap A}$ and $v_1'' \in G_{r'}^{A'' \sm A}$ we have \[v_1 \dz v_2\i = v_1' \dz v_2\i \in \CI z \CI / G_{\bx, r+}.\] Thus $\th$ is well-defined and surjective. It then follows from Lemma \ref{coordinate} that $\th$ is a trivial fibration with fibers isomorphic to the affine space $G_{r'}^{A'' \sm A}$.
    
    %Note that the morphism $\th$ is well-defined, and is the natural pull back of the smooth homogenous map \[G_{r'}^{A''} \times T_r \times G_r^{B''} \to G_{r'}^{A''} z T_r G_r^{B''} \subseteq \CI z \CI / G_{\bx, r+},\quad (v_1, \t, v_2) \mapsto v_1 z \t v_2\i,\] whose fibers are isomorphic to $G_{r'}^E$ with $E = \{f \in A''; z\i(f)(\bx) > r\}$.   
\end{proof}

\subsection{The cohomology of $\tSigma_z^0$}
By the change of variables $x_1 \mapsto x_1 v_1$ and $x_2 \mapsto x_2 v_2$, we have \[\tSigma_z^0 = \{(x_1, x_2, v_1, v_2, \dz) \in \CI_{\phi, r'}^\dag \times \CI_{\phi, r}^\dag \times G_{r'}^{A''} \times G_r^{B''} \times zT_r; F(x_1\dz x_2\i) = v_1 \dz v_2\i \}.\]

Consider the group \[H_{r'} = \{(t_1, t_2) \in T_{r'} \times T_{r}; L(t_1) \dz L(t_2)\i = \dz\}.\] Then $H_{r'}$ acts on $\tSigma_z^0$ by \[(t_1, t_2): (x_1, x_2, v_1, v_2, \dz) \mapsto (t_1 x_1 t_1\i, t_2 x_2 t_2\i, F(t_1) v_1 F(t_1)\i, F(t_2) v_2 F(t_2)\i, F(t_1) \dz F(t_2)\i).\] Note that the actions of $H_{r'}$ and $T_{r'}^F \times T_r^F$ on $\tSigma_z^0$ commute with each other.

\begin{lemma} \label{fix}
     Assume $(\dag)$ holds. We have $H_c^*(\tSigma_z^0, \ov\BQ_\ell)[\phi \otimes \phi\i] \cong \ov\BQ_\ell$ if $F(z) = z \in \tW$ and ${}^z \phi|_{\CT(\CO_k)} = \phi|_{\CT(\CO_k)}$, and $H_c^*(\tSigma_z^0, \ov\BQ_\ell)[\phi \otimes \phi\i] = 0$ otherwise.
\end{lemma}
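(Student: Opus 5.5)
The plan follows the standard Deligne--Lusztig strategy: use the torus $H_{r'}$ to cut $\tSigma_z^0$ down to a fixed-point locus, and then identify that locus explicitly. Since $H_{r'}$ acts on $\tSigma_z^0$ commuting with the $T_{r'}^F\times T_r^F$-action, the first step is to look at its identity component $H_{r'}^\circ$. Its reductive part should be a torus (the $T$-part of the connected group $H_{r'}$), and, as in \cite[Theorem 6.9]{DL76} and its deep level analogues, one has
\[H_c^*(\tSigma_z^0,\ov\BQ_\ell)[\phi\otimes\phi\i] = H_c^*\bigl((\tSigma_z^0)^{H^\circ_{r',\mathrm{red}}},\ov\BQ_\ell\bigr)[\phi\otimes\phi\i],\]
where the unipotent part of $H_{r'}^\circ$ acts trivially on cohomology. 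Writing $\dz = z\tau$, the defining condition of $H_{r'}$ reads $L(t_1)\,{}^{z}L(t_2)\i = 1$. Its torus part is therefore the set of pairs $(t_1,t_2)$ with $t_1 = {}^z t_2$ modulo $F$-rational corrections; in the reductive quotient it contains a copy of $\{t\in \ov T; {}^{z}F$-twisting condition$\}$, which is a torus of dimension $\ge \dim T$.

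The second step is to compute the fixed points. An element of $H^\circ_{r',\mathrm{red}}$ acting generically forces $v_1=1$ and $v_2=1$, because $G^{A''}$ and $G^{B''}$ are products of root subgroups on which a generic $t$ acts by nontrivial characters. It also forces $x_1, x_2 \in \CT_\phi\cdot$(the torus part), after projecting $\CI_\phi^\dag$ to its Levi part. The fixed locus then becomes
\[\{(x_1,x_2,\dz)\in \CT_{\phi,r'}\times\CT_{\phi,r}\times zT_r;\ F(x_1\dz x_2\i)=\dz\}.\]
This set is empty unless $F(z)=z$. When $F(z)=z$, it is isomorphic to $\CT_{\phi,r'}\times\CT_{\phi,r}\times\{\tau\in T_r; F(z\tau)\in \CT_{\phi}\, z\tau\, \CT_\phi\}$. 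The key point is that only the $\CT_\phi$-parts of the $x_i$ survive, since any root-subgroup components are killed by the torus fixed points.

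The third step is to compute the $\phi\otimes\phi\i$-isotypic part of this explicit variety. The $\tau$-component is a finite union of translates of $\CT_\phi$-cosets; up to the connected group $\CT_\phi$, which contributes trivially, its cohomology is the permutation module $\ov\BQ_\ell[T_r^F/\CT_{\phi,r}^F]$. Here $(t_1,t_2)$ acts via $\tau\mapsto {}^{z\i}t_1\,\tau\, t_2\i$. Condition $(\dag)$ ensures that $\phi\otimes\phi\i$ is trivial on the stabilizer contributions coming from $\CT_\phi^F$; without it the isotypic part would vanish outright. The $\phi\otimes\phi\i$-part of $\ov\BQ_\ell[T^F_r]$ under $(t_1,t_2)\colon \tau\mapsto {}^{z\i}t_1\tau t_2\i$ is one-dimensional exactly when ${}^{z}\phi=\phi$ on $T_r^F$, i.e.\ on $\CT(\CO_k)$, and zero otherwise. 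This gives the claimed dichotomy.

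The main obstacle is the first step, namely identifying a torus inside $H_{r'}$ whose fixed points are this small. One has to choose a suitable one-parameter family, e.g.\ $t_1 = {}^{z}t_2$ with $t_2$ in a sufficiently regular subtorus satisfying $L(t_1)=L({}^z t_2)$. This works when $F(z)=z$. When $F(z)\ne z$, I would instead prove vanishing directly by producing an $\BA^1$-family as in Lemma \ref{deform}, or show that the fixed locus is empty.
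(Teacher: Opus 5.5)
Your outline is the paper's: pass to fixed points of $H_{r',\red}^\circ$, reduce to the torus locus $\{(x_1,x_2,\dz)\in\CT_{\phi,r'}\times\CT_{\phi,r}\times zT_r;\ F(x_1\dz x_2\i)=\dz\}$, note it is empty unless $F(z)=z\in\tW$, and finish with a permutation-module count using $(\dag)$. However, a reduction step is missing from your last part, and your first step is not actually settled.

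\textbf{Missing reduction.} In your third step you treat the $\t$-locus as a finite union of $\CT_{\phi}$-cosets with cohomology $\ov\BQ_\ell[T_r^F/\CT_{\phi,r}^F]$. This silently assumes ${}^z\CT_{\phi,r}=\CT_{\phi,r}$, i.e.\ $r_{z(\b)}=r_\b$ for all $\b\in\Phi$, which is not automatic.

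For general $z=F(z)$, the condition $F(z\t)\in\CT_\phi z\t\CT_\phi$ means $L(\t)\in K:=({}^{z\i}\CT_{\phi,r})\CT_{\phi,r}$. So the $\t$-locus is $T_r^FK$, a union of cosets of a group strictly larger than $\CT_{\phi,r}$. Likewise, the $(x_1,x_2)$-fibres are cosets of $\{(x_1,x_2);\ x_2={}^{z\i}x_1\}$, not a product $\CT_{\phi,r'}\times\CT_{\phi,r}$.

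The paper disposes of this case first. For $\a$ with $r_\a\ge 1$, the elements $(N_{z(\a)^\vee,r_\a}(\t),N_{\a^\vee,r_\a}(\t))$, $\t\in\BF_{q^N}$, act trivially on the fixed locus because $F(z)=z$. On the other hand, $\phi\otimes\phi\i$ is non-trivial on them when $r_{z(\a)}<r_\a$ (and symmetrically with $z\i$). Hence the isotypic part vanishes unless ${}^z\CT_{\phi,r}=\CT_{\phi,r}$. Only in that case is the locus universally homeomorphic to $\CT_{\phi,r'}\times zT_r^F\CT_{\phi,r}$, with cohomology $\bigoplus_\l{}^z\l\otimes\l\i$ over characters $\l$ of $T_r^F/\CT_{\phi,r}^F$.

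An equivalent fix: the whole twisted diagonal $\{(t,{}^{z\i}t);\ t\in T_{r'}^F\}$ acts trivially on the fixed locus, so the isotypic part vanishes unless ${}^z\phi=\phi$. Moreover, ${}^z\phi=\phi$ together with $F(z)=z$ forces $r_{z(\b)}=r_\b$.

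\textbf{First step.} Your candidate torus $t_1={}^zt_2$ lies in $H_{r'}$ only when $z$ and $F(z)$ have the same image in $W$, and for the remaining case you only offer a hedge. No case split is needed. The group $H_{r'}$ is cut out by $L(t_1)={}^zL(t_2)$ in $T_r$, so by Lang's theorem $H_{r'}\to T_{r'}$ is surjective with finite fibres and $H_{r'}\to T_r$ is surjective. Hence $H_{r',\red}^\circ$ surjects onto both $T_{r',\red}$ and $T_{r,\red}$. This is exactly what forces $v_1=v_2=1$ and $x_1,x_2$ toral (hence in $\CT_{\phi}$) at fixed points. The equation $F(x_1\dz x_2\i)=\dz$ with toral $x_i$ then forces $F(z)=z$ in $\tW$, so no $\BA^1$-family is required.
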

\begin{proof}
    Let $H_{r', \red}^\circ$ be the reductive part of the identity component $H_{r'}^\circ$ of $H_{r'}$. Then the natural projection maps $H_{r', \red}^\circ \to T_{r', \red}$ and $H_{r', \red}^\circ \to T_{r, \red}$ given by $(t_1, t_2) \mapsto t_1$ and $(t_1, t_2) \mapsto t_2$ respectively are surjective. Hence we have \[(\tSigma_z^0)^{H_{r', \red}^\circ} = \{(x_1, x_2, \dz) \in \CT_{\phi, r'} \times \CT_{\phi, r} \times z T_r; F(x_1 \dz x_2\i) = \dz\},\] which is empty unless $z = F(z) \in \tW$.

    Suppose that $F(z) = z \in \tW$. Then  \[H_c^*(\tSigma_z^0, \ov\BQ_\ell)[\phi \otimes \phi\i] = H_c^*((\tSigma_z^0)^{H_{r', \red}^\circ}, \ov\BQ_\ell)[\phi \otimes \phi\i].\] Let $\a \in \Phi$ with $r_\a \ge 1$. Let $M \subseteq T_{r'}^F \times T_r^F$ be the subset consisting of the following elements \[\text{$(N_{z(\a)^\vee, r_\a}(\t), N_{\a^\vee, r_\a}(\t))$, (resp. $(N_{\a^\vee, r_\a}(\t), N_{z\i(\a)^\vee, r_\a}(\t))$) for $\t \in \BF_{q^N}$}.\] As $F(z) = z$, one checks that $M$ acts on $(\tSigma_z^0)^{H_{r', \red}^\circ}$ trivially. If $r_{z(\a)} < r_\a$ (resp. $r_\a > r_{z\i(\a)}$), then $\phi$ is non-trivial on $N_{\a^\vee, r_\a}(\BF_{q^N})$ but trivial on $N_{z(\a)^\vee, r_\a}(\BF_{q^N})$ (resp. $N_{z\i(\a)^\vee, r_\a}(\BF_{q^N})$). Hence $\phi \otimes \phi\i$ is non-trivial over $M$. We deduce that $H_c^*((\tSigma_z^0)^{H_{r', \red}^\circ}, \ov\BQ_\ell)[\phi \otimes \phi\i] = 0$ unless $ r_{z(\a)} \ge r_\a$ and $r_\a \le r_{z\i(\a)}$ for all $\a \in \Phi$ with $r_\a \ge 1$, that is, $r_{z(\b)} = r_{\b}$ for all $\b \in \Phi$. In particular, ${}^z \CT_{\phi, r} = \CT_{\phi, r}$.

    Suppose ${}^z \CT_{\phi, r} = \CT_{\phi, r}$. As $\CT_{\phi, r}$ is an $F$-stable connected unipotent group, we have a universal homeomorphism\[(\tSigma_z^0)^{H_{r', \red}^\circ} \cong \CT_{\phi, r'} \times (z T_r^F \CT_{\phi, r}).\] Hence \begin{align*} H_c^*((\tSigma_z^0)^{H_{r', \red}^\circ}, \ov\BQ_\ell) &\cong H_c^*(z T_r^F \CT_{\phi, r}, \ov\BQ_\ell) \cong H_c^*((z T_r^F \CT_{\phi, r}) / \CT_{\phi, r}, \ov\BQ_\ell) \\ &\cong H_c^*((z T_r^F / \CT_{\phi, r}^F), \ov\BQ_\ell) \cong \bigoplus_\l {}^z\l \otimes \l\i,\end{align*} where $\l$ ranges over characters of $T_r^F / \CT_{\phi, r}^F$. Hence the statement follows. 
\end{proof}

\begin{theorem} \label{intertwine}
    Assume $(\dag)$ holds. For $w \in G^F$ we have \[\dim I_w(\CR_{T_r}^{G_r}(\phi)) = |\{z \in (W_\bx z_w W_\bx)^F; {}^z \phi |_{\CT(\CO_k)} = \phi|_{\CT(\CO_k)}\}|.\] In particular, $\pm \CR_{T_r}^{G_r}(\phi)$ is irreducible if and only if the stabilizer of $\phi|_{\CT(\CO_k)}$ in $W_\bx^F$ is trivial.
\end{theorem}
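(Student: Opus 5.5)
The plan is to assemble the results of \S\ref{sec:Sigma} and \S\ref{sec:dim-inter}. By Proposition \ref{geo} we have $\dim I_w(\CR_{T_r}^{G_r}(\phi)) = \dim H_c^*(\Sigma, \ov\BQ_\ell)[\phi \boxtimes \phi\i]$, so everything reduces to computing this isotypic Euler characteristic.

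\textbf{Step 1: reduce to the strata $\Sigma_z$.} Use the finite decomposition $\Sigma = \bigsqcup_{z \in W_\bx z_w W_\bx} \Sigma_z$ induced by the Bruhat decomposition and the isomorphism $\th$ of Proposition \ref{identification}. Each $\Sigma_z$ is locally closed and stable under $T_{r'}^F \times T_r^F$. Choose a filtration by closed unions of strata, ordered by the Bruhat order on $\CI\backslash G/\CI$. Additivity of compactly supported Euler characteristics, applied to the $\phi\boxtimes\phi\i$-isotypic parts (exact since the finite group acts and $\ell$ is invertible), gives
\[ H_c^*(\Sigma)[\phi\boxtimes\phi\i] = \sum_{z} H_c^*(\Sigma_z)[\phi\boxtimes\phi\i] \]
in the Grothendieck group. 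The same additivity, used with the finer stratification of $\Sigma_z$ into $\Sigma_z^0$ and the pieces $\Sigma_z^{f_1,f_2}$, $\Sigma_z^{f_1,*}$, $\Sigma_z^{*,f_2}$, is the content of the "in particular" clause of Proposition \ref{red}. That clause gives $H_c^*(\Sigma_z)[\phi\otimes\phi\i]=H_c^*(\Sigma_z^0)[\phi\otimes\phi\i]$.

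\textbf{Step 2: pass to $\tSigma_z^0$.} By Lemma \ref{fibration} we have $H_c^*(\Sigma_z^0)[\phi\otimes\phi\i] = H_c^*(\tSigma_z^0)[\phi\otimes\phi\i]$. By Lemma \ref{fix}, which uses $(\dag)$, the latter is one-dimensional exactly when $F(z)=z$ and ${}^z\phi|_{\CT(\CO_k)}=\phi|_{\CT(\CO_k)}$, and is zero otherwise. Summing over $z \in W_\bx z_w W_\bx$ yields the stated count over $(W_\bx z_w W_\bx)^F$. One point needs checking: Lemma \ref{fix} gives $\ov\BQ_\ell$ as a virtual module, possibly placed in an odd degree, and the sign must be $+1$. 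I would verify this from the universal homeomorphism in its proof, which identifies the fixed locus with $\CT_{\phi,r'}\times zT_r^F\CT_{\phi,r}$, a perfected affine space times a finite set. Its compactly supported cohomology lives in even degree $2\dim$, so the contribution is $+1$. This sign bookkeeping is the only point I expect to need care; otherwise the theorem is a formal consequence of the earlier results.

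\textbf{Step 3: irreducibility.} Take $w=1$, so $z_w=1$, $W_\bx z_w W_\bx=W_\bx$ and $I_1(\CR)=\Hom_{G_{\bx,0}^F}(\CR,\CR)$. The formula then gives $\langle \CR,\CR\rangle = |\mathrm{Stab}_{W_\bx^F}(\phi|_{\CT(\CO_k)})|$. A virtual representation $\rho$ satisfies $\langle\rho,\rho\rangle=1$ if and only if $\pm\rho$ is irreducible, which proves the final assertion.
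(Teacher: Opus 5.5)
Your proposal is correct and is essentially the paper's proof, which is exactly the chain Proposition \ref{geo}, Proposition \ref{red}, Lemma \ref{fibration}, Lemma \ref{fix}. The extra bookkeeping you supply is also correct and makes explicit what the paper leaves implicit: additivity over the finitely many $T_{r'}^F\times T_r^F$-stable strata $\Sigma_z$, the check that the surviving class sits in even degree and so contributes $+1$, and the specialization $w=1$ for the irreducibility criterion.
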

\begin{proof}
    It follows from Proposition \ref{geo}, Proposition \ref{red}, Lemma \ref{fibration} and Lemma \ref{fix}. 
\end{proof}

\subsection{Construction of irreducible supercuspidals} \label{subsec:c-ind}
Assume further that $T$ is elliptic. In particular, $T(k) = Z(k) \CT(\CO_k)$. Here $Z$ is the center of $G$. 

As $\phi$ is a character of $T^F$, we can extend each $G_{\bx, 0}^F$-module $H_c^i(X_r, \ov\BQ_\ell)[\phi]$ to a $Z^F G_{\bx, 0}^F$-module so that $Z^F$ acts via $\phi$. In this way, we view $\CR_{T_r}^{G_r}(\phi)$ as a (virtual) $Z^F G_{\bx, 0}^F$-module. 

Suppose that $\pm\CR_{T_r}^{G_r}(\phi)$ is a genuine representation of $G_{\bx, 0}^F$. By taking the compact induction, we construct a representation of $G(k) = G^F$ by  \[\pi_{T, \phi} = \text{c-} \ind_{Z^F G_{\bx, 0}^F}^{G^F} \pm \CR_{T_r}^{G_r}(\phi).\]

Let $[\bx]$ be the image of $\bx$ in the reduced Bruhat-Tits building of $G$ over $k$. We denote by $\tW_{[\bx]} = \{z \in \tW; z([\bx]) = [\bx]\}$ the stabilizer of $[\bx]$ in $\tW$. Let $\pr: \tW = X_*(T) \rtimes W \to W$ be the natural projection.

\begin{corollary} \label{irr-sup}
    Assume $T$ is elliptic. Suppose that $\pm\CR_{T_r}^{G_r}(\phi)$ is an irreducible $G_{\bx, 0}^F$-module and the stabilizer of $\phi|_{\CT(\CO_k)}$ in $\pr(\tW_{[\bx]}^F)$ is trivial. Then $\pi_{T, \phi}$ is an irreducible supercuspidal representation of $G^F$.   
\end{corollary}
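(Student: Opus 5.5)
We will use the standard criterion for compactly induced representations (Mackey, or \cite[Proposition 1.5]{Ger75a} in the form used by \cite{CI23} and \cite{Kal}). Let $K = Z^F G_{\bx,0}^F$, which is open and compact modulo the center, and let $\rho = \pm\CR_{T_r}^{G_r}(\phi)$, viewed as a $K$-module. If $\rho$ is irreducible and $I_g(\rho) = 0$ for every $g \in G^F \sm K$, then $\text{c-}\ind_K^{G^F}\rho$ is irreducible. Its matrix coefficients are compactly supported modulo the center, so it is supercuspidal. Intertwining is unchanged when $g$ is multiplied on either side by elements of $K$. Hence it suffices to show that $I_w(\rho) = 0$ for a set of representatives $w$ of the double cosets $K\backslash G^F/K$ other than the trivial one. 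Since $Z^F$ acts by the same scalar $\phi|_{Z^F}$ on $\rho$ and on ${}^w\rho$, the intertwining space over $K \cap {}^wK$ is the same as the space over $G_{\bx,0}^F \cap {}^w G_{\bx,0}^F$ considered in Theorem \ref{intertwine}. Condition $(\dag)$ is implicitly needed here so that Theorem \ref{intertwine} applies. It presumably follows because $\rho \neq 0$: if $\phi$ were non-trivial on $\CT_\phi(\CO_k)$, the vanishing argument in the introduction would force $\CR(\phi) = 0$.

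By Theorem \ref{intertwine}, $\dim I_w(\rho)$ is the number of $z \in (W_\bx z_w W_\bx)^F$ with ${}^z\phi|_{\CT(\CO_k)} = \phi|_{\CT(\CO_k)}$. So suppose $I_w(\rho) \neq 0$, and choose such an $F$-fixed $z$ in the double coset. The main step is to show that $z \in \tW_{[\bx]}$. Granting this, $\pr(z) \in \pr(\tW_{[\bx]}^F)$ stabilizes $\phi|_{\CT(\CO_k)}$; note that the action of $z$ on $\CT(\CO_k)$ only depends on $\pr(z)$. The hypothesis then gives $\pr(z) = 1$, so $z \in X_*(T)^F \cap \tW_{[\bx]}$. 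Because $T$ is elliptic, $X_*(T)^F$ acts on the reduced building through translations by the center, so $z$ has a lift in $Z^F \CT(\CO_k)$. Consequently $w \in G_{\bx,0}^F\, \dot z\, G_{\bx,0}^F \subseteq K$, which is what we need.

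To see that an $F$-fixed $z$ fixes $[\bx]$, use ellipticity. The element $F$ acts on $X_*(T)\otimes\BR$ as an affine map fixing $\bx$, and its linear part has no nonzero fixed vectors modulo $X_*(Z)\otimes\BR$. Since $z$ commutes with $F$, the point $z(\bx)$ is also $F$-fixed. The $F$-fixed locus of the apartment is therefore $\bx + X_*(Z)\otimes\BR$, which maps to the single point $[\bx]$ of the reduced apartment. Hence $z([\bx]) = [\bx]$.

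The main obstacle is making this reduction precise. The $F$-fixed element $z$ is a priori only an element of $\tW$. We have to check two things. First, $F$-fixed elements of $\tW$ correspond to $F$-stable double cosets meeting $G^F$ (Lang's theorem on the connected group $\CT$ allows an $F$-fixed lift $\dot z \in N_T^F$). Second, $z$ lies in the same $G_{\bx,0}^F$-double coset as $w$, or at least some representative $w' \in K w K$ can be replaced by such a $\dot z$. We would argue as follows: $\dot z$ and $w$ both lie in $G_{\bx,0} w G_{\bx,0}$, and this double coset meets $G^F$ in a single $G_{\bx,0}^F$-double coset, because the stabilizer $G_{\bx,0}\cap{}^wG_{\bx,0}$ is connected (the lemma preceding Proposition \ref{identification}). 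With this, the argument above gives the conclusion.
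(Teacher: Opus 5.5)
Your proposal is correct and follows the paper's proof. Nonzero intertwining together with Theorem \ref{intertwine} produces an $F$-fixed $z \in W_\bx z_w W_\bx$ stabilizing $\phi|_{\CT(\CO_k)}$; ellipticity forces $z \in \tW_{[\bx]}^F$, the hypothesis forces $\pr(z)=1$, and so $w \in Z^F G_{\bx,0}^F$. Two side remarks: the ``main obstacle'' is lighter than you fear, since once $z = t^\lambda$ with $\lambda \in X_*(Z)^F$ the lift $\lambda(\varpi) \in Z^F$ is central, so $G_{\bx,0}\lambda(\varpi)G_{\bx,0} = \lambda(\varpi)G_{\bx,0}$ and $w \in \lambda(\varpi)G_{\bx,0}^F$ directly, with no Lang-type argument needed; and deducing $(\dag)$ from $\CR(\phi) \neq 0$ is a reasonable way to supply the hypothesis of Theorem \ref{intertwine} that the paper leaves implicit in this corollary.
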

\begin{proof}
    Let $w \in G^F$ such that \[\dim I_w(\CR_{T_r}^{G_r}(\phi)) \neq 0.\] By Theorem \ref{intertwine} there exists $z \in (W_\bx z_w W_\bx)^F$ such that ${}^z \phi|_{\CT(\CO_k)} = \phi|_{\CT(\CO_k)}$. In particular, $F(z(\bx)) = F(z)(\bx) = z(\bx)$. Hence the vector $z(\bx) - \bx \in X_*(T) \otimes \BR$ is fixed by $F$. As $T$ is elliptic, we have $z(\bx) - \bx \in X_*(Z) \otimes \BR$ and hence $z \in \tW_{[\bx]}^F$. By the assumption on $\phi$, $\pr(z)$ is trivial, that is, $z \in X_*(Z)$, which implies that $w \in Z^F G_{\bx, 0}^F$. So $\pi_{T, \phi}$ is irreducible and the proof is finished.
\end{proof}

\section{Vanishing of $V_r^F$-invariants} \label{sec:sum}

In this section, we show the compact inductions $\pi_{T, \phi}$ in \S \ref{subsec:c-ind} (with $T$ elliptic) are direct sums of irreducible supercuspidals for a large class of characters $\phi$.

Let $P \subsetneq G$ be a proper parabolic subgroup over $k$. Then there exists a $k$-rational cocharacter $\l: \BG_m \to G$ such that \[P = P_G(\l) := \{g: \lim_{t \to 0} \l(t) g \l(t)\i \text{ exists }\}.\] By Bruhat-Tits theory, there is a maximal torus $S \subseteq G$ over $k$ such that (1) $S$ contains a maximally $k$-split torus of $G$ with $\lambda \in X_*(S)$ and (2) $S$ is maximally $\brk$-split and hence $\brk$-split since so is $G$. Then $P = M V$ where $M$ is the Levi subgroup containing $S$ and $V$ is the unipotent radical of $P$. In particular, there exists $\l = F(\l) \in X_*(S)$ such that the sets of roots for $S$ in $P$ and $M$ are given by $\Phi(P, S) = \{\a \in \Phi(G, S); \a(\l) \ge 0\}$ and $\Phi(M, S) = \{\a \in \Phi(G, S); \a(\l) = 0\}$ respectively. Let $\ov V$ be the unipotent radical of the opposite of $P$. We denote by $\tPhi_{\ov V}$ the set of affine roots for $S$ whose finite parts appear in $\ov V$.

Let $W_{M, r} = N_M(S)_r / S_r$ be the Weyl group of $M_r$, where $N_{M}(S)$ is the normalizer of $S$ in $M$. Let $N_G(S, T) = \{x \in G; {}^x S = T\}$ and $W_{S, T, r} = T_r \backslash N_G(S, T)_r / S_r$.  Then we have a decomposition \[G_r = \bigsqcup_{z \in W_{S, T, r} / W_{M, r}} \CI_{z, r} z P_r,\] where $\CI_{z, r} = \CI_r \cap {}^z \ov V_r$. 

Let $z \in W_{S, T, r} / W_{M, r}$. We set $\tPhi_{z, r} = \{f \in \tPhi_{\re, r}^{>0}; z\i(f) \in \tPhi_{\ov V}\}$. Then \[G_r^{\tPhi_{z, r}} = \CI_{z, r}.\] For $v \in \CI_{z, r}$ we denote by $v_f \in \BA_f = \ov\BF_q$ the coordinate of $v$ at $f \in \tPhi_{z, r}$ via the above identification.

\subsection{The reduction step}
Consider the variety \[\Xi = \{(x, \psi) \in \CI_{\phi, r}^\dag \times G_r / V_r; F(x\psi) = \psi\},\] on which $T_r^F$ acts by $t: (x, \psi) \mapsto (txt\i, t\psi)$.
\begin{lemma} \label{V-geo}
    We have $\CR_{T_r}^{G_r}(\phi)^{V_r^F} \cong H_c^*(\Xi, \ov\BQ_\ell)[\phi\i]$.
\end{lemma}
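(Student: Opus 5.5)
This is the standard argument that taking $V_r^F$-invariants of a cohomological induction amounts to passing to the quotient by $V_r^F$. Since $V_r^F$ is a finite group acting on $X_r$ by left multiplication, and this action commutes with the right $T_r^F$-action, we have
\[
\CR_{T_r}^{G_r}(\phi)^{V_r^F} = \bigl(H_c^*(X_r, \ov\BQ_\ell)[\phi]\bigr)^{V_r^F} \cong H_c^*(V_r^F \backslash X_r, \ov\BQ_\ell)[\phi].
\]
This holds because for a finite group action the invariants of compactly supported cohomology (with characteristic zero coefficients) equal the cohomology of the quotient, and the $\phi$-isotypic projection commutes with the $V_r^F$-action.

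The key step is to identify $V_r^F\backslash X_r$ with $\Xi$, in the spirit of Proposition \ref{quotient}. First we apply Proposition \ref{quotient} with $M = G_r$, $H = \CI_{\phi,r}^\dag$ and $N = V_r$. Here $V_r$ is $F$-stable because $P$ and $V$ are $k$-rational, and it is connected because it is the jet-truncation of a unipotent group. The proposition gives an isomorphism
\[
V_r^F \backslash L\i(F\CI_{\phi,r}^\dag) \cong \{(y,\psi) \in F\CI_{\phi,r}^\dag \times G_r/V_r;\ F(\psi) = y\i \psi\},
\]
sending $m \mapsto (L(m), m\i V_r)$. The right $T_r^F$-action $m \mapsto mt$ becomes $(y,\psi) \mapsto (t\i y t, t\i\psi)$, since $L(mt) = t\i L(m) t$ for $t$ $F$-fixed.

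Next we rewrite $y = F(x)$ with $x \in \CI_{\phi,r}^\dag$. The condition $F(\psi) = F(x)\i \psi$ becomes $F(x\psi) = \psi$, which is exactly the defining condition of $\Xi$. Under this change of variables the $T_r^F$-action becomes $(x,\psi) \mapsto (t\i x t, t\i\psi)$. Compared with the action on $\Xi$ given by $t\colon (x,\psi) \mapsto (txt\i, t\psi)$, this is the action of $t\i$. Hence the $\phi$-isotypic part for the action on $V_r^F\backslash X_r$ corresponds to the $\phi\i$-isotypic part for the action on $\Xi$, which gives the claimed isomorphism.

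The main point to check carefully is that $V_r$ is a connected $F$-stable closed subgroup of $G_r$, so that Lang's theorem applies in Proposition \ref{quotient}. One also has to track the inversion of the torus character through the identification, and make sure the quotient morphism is compatible with the cohomology comparison. The latter holds because $V_r^F$ is finite and acts freely on $X_r$, so the quotient is an honest finite étale quotient.
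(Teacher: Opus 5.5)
Your proposal is correct and takes the same approach as the paper, whose proof is the single line ``It follows from Proposition \ref{quotient}'' (applied with $M = G_r$, $H = \CI_{\phi,r}^\dag$, $N = V_r$). You have filled in the steps the paper leaves implicit: replacing $V_r^F$-invariants by the cohomology of $V_r^F\backslash X_r$, substituting $y = F(x)$, and noting that the torus action is inverted, which is why $[\phi]$ becomes $[\phi\i]$.
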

\begin{proof}
    It follows from Proposition \ref{quotient}.
\end{proof}

There is a decomposition \[\Xi = \bigsqcup_{z \in W_{S, T, r}/W_{M, r}} \Xi_z,\] where \begin{align*}\Xi_z &= \{(x, \psi) \in \Xi; \psi \in \CI_r z P_r / V_r\} \\ &=\{(x, vzm V_r) \in \CI_{\phi, r}^\dag \times \CI_{z, r}zP_r/V_r; F(x v z m V_r) = v z m V_r\} \\ &\cong \{(x, v, zm V_r) \in \CI_{\phi, r}^\dag \times \CI_{z, r} \times zP_r/V_r; F(x v z m V_r) = v z m V_r\}. \end{align*}

Let $\tPhi_{z, r}' = \{f \in \tPhi_{z, r}; f < f^\dag\}$. There is a decomposition \[\Xi_z = \Xi_z^0 \sqcup \bigsqcup_{f \in \tPhi_{z, r}'} \Xi_z^f,\] where $\Xi_z^0$ is defined by the condition that $v \in \CI_{\phi, r}^\dag \cap {}^z \ov V_r$, and $\Xi_z^f$ is defined by the condition that $v_f \neq 0$ and $v_h = 0$ for all $h \in \tPhi_{z, r}'$ such that $h^\dag > f^\dag$.

\begin{lemma} \label{vanish}
    We have $H_c^*(\Xi_z^f, \ov\BQ_\ell)[\phi\i] = 0$ for $f \in \tPhi_{z, r}'$.
\end{lemma}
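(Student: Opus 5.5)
The plan is to imitate the deformation argument of Lemma \ref{deform} and Lemma \ref{deform-star}, in the simpler one-sided setting of $\Xi_z$. Fix $f \in \tPhi_{z, r}'$. I will construct a morphism $\D: \BA^1 \times \Xi_z^f \to \Xi_z^f$ with two properties: each $\D_\t$ is an isomorphism, and for $\t \in \BF_{q^N}$ the map $\D_\t$ agrees with the action of $N_{\a_f^\vee, r_f}(\t) \in T_r^F$. Since $f < f^\dag$, we have $r_f \ge 1$. By the definition of $r_{\a_f}$, the character $\phi$ is then non-trivial on $N_{\a_f^\vee, r_f}(\BF_{q^N})$. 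As in \cite[Proposition 6.4]{DL76}, the group $\BA^1$ is connected, so $\D_\t$ acts trivially on $H_c^*(\Xi_z^f)$. This forces the $\phi\i$-isotypic part to vanish.

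To construct $\D$, write points as $(x, v, zmV_r)$ with $F(xvzmV_r) = vzmV_r$. Put $t = N_{\a_f^\vee, r_f}(\t)$ and $\d = u_{f^\dag}(v_f\i(F^N(\t) - \t))$. Then define
\[\D_\t(x, v, zmV_r) = (x', v, F(t) z m V_r), \qquad x' = {}^{t}(x v \d v\i L(t)\i).\]
Here $F(t)$ is absorbed into the $zP_r/V_r$ coordinate via $t v = ({}^t v) t$, so the $\CI_{z,r}$ coordinate changes only by $T$-conjugation; one checks this preserves the stratum $\Xi_z^f$. One should also arrange that $\d$ is absorbed into $V_r$ or into $x'$, possibly by choosing $\d$ so that $\d v z m V_r = v z m V_r$ modulo $\CI_{\phi,r}^\dag$.

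The defining equation then holds as in step (b) of the proof of Lemma \ref{deform}. What remains is to show that $x' \in \CI_{\phi, r}^\dag$, that is, $v \d v\i L(t)\i \in \CI_{\phi, r}^\dag$. For this I apply Corollary \ref{commutator} to $g = \d\i$, of root $f^\dag > f$ with $f^\dag > (f^\dag)^\dag$, and to $g' = v$. By the definition of the stratum, $h = \max\{h'^\dag : v_{h'} \neq 0,\ h' < h'^\dag\}$ equals $f^\dag$, and $v_f$ is the only coordinate contributing. The corollary gives $[\d\i, v] \in \a_f^\vee(1 + \varpi^{r_f}[F^N(\t) - \t])\CI_{\phi, r}^\dag$, and multiplying by $L(t)\i$ cancels the torus factor. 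For $\t \in \BF_{q^N}$ we have $\d = 1$ and $L(t) = 1$, so $\D_\t$ is the action of $t$. Invertibility follows from the same left-and-right inverse trick as before, using $\D^\flat$ with $t = N_{\a_f^\vee, r_f}(-\t)\i$.

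I expect the main obstacle to be the compatibility of the $\d$-twist with the coset $zmV_r$. Unlike in $\Sigma_z$, there is no second Iwahori coordinate to absorb ${}^{\dz\i}\d$. One therefore has to check that $\d$, of root $f^\dag$ with $z\i(f^\dag) = -z\i(f) + r_f$ having finite part in $V$, can be moved into $V_r$ after conjugation by $zm$. This uses that $f \in \tPhi_{z,r}$ means $z\i(\a_f)$ lies in $\ov V$, so $z\i(\a_{f^\dag})$ lies in $V$. Conjugating by $m \in M_r$ preserves $V_r$, and that is the key point to verify.
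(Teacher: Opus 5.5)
Your proposal is correct and follows the paper's proof essentially verbatim: the same deformation $\psi \mapsto F(t)\psi$ with $t = N_{\a_f^\vee, r_f}(\t)$ and $\d = u_{f^\dag}(v_f\i(F^N(\t)-\t))$, where ${}^{z\i}\d \in V_r$ (hence ${}^{(zm)\i}\d \in V_r$) exactly because $z\i(\a_f)$ is a root of $\ov V$, and the rest is as in Lemma~\ref{deform}. Two small fixes: the new middle coordinate should be ${}^{F(t)}v$ rather than $v$, as your next sentence already indicates; and the ``obstacle'' in your last paragraph is not a real one, since the observation you make there already settles it.
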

\begin{proof}
We construct a morphism \[\D: \BA^1 \times \Xi_z^f\rightarrow \Xi_z^f, \quad (\t, x, vzmV_r) \mapsto (x', F(t) vzmV_r),\quad t=N_{\a^\vee_f,r_f}(\t).\] To determine $x'$ we need to show that \[\tag{a} F(t) vzm V_r = F(x' F(t) vzm) V_r.\] By definition, we have \[F(t) vzmV_r = F(t x vzm)V_r.\] Thus (a) is equivalent to \[\tag{b} x'F(t) vzm V_r = t x vzm V_r.\] In particular, (b) follows from the identity \[x' F(t) v = t x v \d\] for a suitable element $\d \in G_r$ such that ${}^{(zm)\i} \d \in V_r$ (or equivalently, ${}^{z\i} \d \in V_r$) and \[x' = {}^t(x v \d v\i L(t)\i) \in \CI_{\phi, r}^\dag.\] 

Let $\d = u_{f^\dag}(v_f\i(F^N(\t) - \t))$. By assumption, we have $z\i(\a_f) \in \Phi_{\ov V}$ and hence ${}^{z\i} \d \in \CG_r^{z\i(f^\dag)} \subseteq V_r$. The rest of the proof proceeds as in the proof of Lemma \ref{deform}.    
\end{proof}

\subsection{The vanishing result}
Using the change of variables $x \mapsto x v\i$ we have \[\Xi_z^0 \cong \{(x, v, zmV_r) \in \CI_{\phi, r}^\dag \times (\CI_{\phi, r}^\dag \cap {}^z \ov V_r) \times zP_r/V_r; F(x z m V_r) = v z m V_r\},\] on which $T_r^F$ acts by $t: (x, v, zmV_r) \mapsto (txt\i, tvt\i, tzmV_r)$.

We set $\mu = z(\l) \in X_*(T)$. We define $N_\chi = N_F^{F^N} \circ \chi \circ \left[-\right]: \BG_m \to G_r$ for $\chi=\l$ or $\mu$, where $\left[-\right]$ denotes the Teichm\"uller lift in mixed characteristic and the natural inclusion in equal characteristic. We abbreviate the image of $\chi([t])$ in $G_r$ by $\chi(t)$.
\begin{lemma} \label{empty}
    There is an action of $\BG_m$ on $\Xi_z^0$ given by \[\t: (x, v, zmV_r) \mapsto ({}^{N_\mu(\t)} x , {}^{F(N_\mu(\t))} v, F(N_\mu(\t)) zm F(N_\l(\t\i)) V_r).\] As a consequence, if $T$ is elliptic, then $(\Xi_z^0)^{\BG_m} = \emptyset$ and $H_c^*(\Xi_z^0, \ov\BQ_\ell)[\phi\i] = 0$.
\end{lemma}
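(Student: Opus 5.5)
We first check that the formula defines a self-map of $\Xi_z^0$, then show there are no fixed points when $T$ is elliptic, and finally pass from the $\BG_m$-action to cohomology.

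\textbf{Step 1: the action is well defined.} Write $\t_\mu = N_\mu(\t)$ and $\t_\l = N_\l(\t)$; these lie in $T_r$ and $S_r$ respectively.

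First, conjugation by $T_r$ preserves $\CI_{\phi,r}^\dag$, since $\CT_\phi$ is normal in $T$ and each $G^f_r$ is $T_r$-stable. So ${}^{\t_\mu}x \in \CI_{\phi,r}^\dag$. For the same reason ${}^{F(\t_\mu)}v \in \CI_{\phi,r}^\dag$.

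Second, $F(\t_\mu) \in T_r = {}^z S_r$ normalizes ${}^z\ov V_r$. Hence ${}^{F(\t_\mu)}v$ stays in $\CI_{\phi,r}^\dag \cap {}^z\ov V_r$.

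Third, the key point is that $\l$ is $k$-rational and centralizes $M$. Hence $F(\t_\l)$ is central in $M_r$ and normalizes $V_r$. So $zm F(\t_\l^{-1})V_r$ still lies in $zP_r/V_r$, and the coset is well defined.

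Fourth, the defining equation $F(xzmV_r) = vzmV_r$ must be preserved. Since $\mu = z(\l)$, we have $z\,\l(t)\,z^{-1} = \mu(t)$, so $zF(\t_\l)z^{-1} = F(\t_\mu)$ up to the chosen lift $\dot z$ of $z$. Substituting, the new left-hand side is
\[F\bigl({}^{\t_\mu}x \cdot F(\t_\mu)\, zm\, F(\t_\l)^{-1}\bigr)V_r.\]
We want to rewrite this as $F(\t_\mu)\, F(xzm)\, F^2(\t_\l)^{-1}V_r$. This needs $L(\t_\mu)$ to pass through $zm$ and become an element of $S$ central in $M$, which is exactly what the relation $\mu = z(\l)$ gives. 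I would verify this on $\BF_{q^N}$-points first and then argue formally using $F^N$-invariance of $N_F^{F^N}$. This bookkeeping is the only delicate point. If it fails, I would replace $\t_\mu$ by a twisted version built from $N_F^{F^N}$ so that the Lang terms telescope.

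Once the equation is preserved, the action property is clear because $N_\mu$ and $N_\l$ are homomorphisms in $\t$.

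\textbf{Step 2: no fixed points when $T$ is elliptic.} Suppose $(x, v, zmV_r)$ is fixed by all $\t \in \BG_m$.

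The first coordinate gives ${}^{\t_\mu}x = x$ for all $\t$, so $x$ lies in the centralizer of the image of $N_\mu$, intersected with $\CI_{\phi,r}^\dag$. Since $T$ is elliptic and $\l$ is a noncentral $k$-rational cocharacter (because $P$ is proper), the cocharacter $N_F^{F^N}\circ\mu = \sum_i F^i(\mu)$ is central. In fact it vanishes modulo $X_*(Z)$, because ellipticity means $F$ has no nonzero fixed vectors on $X_*(T)\otimes\BR$ outside $X_*(Z)\otimes\BR$.

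That alone gives no contradiction, so use the rationality of $\l$ instead. Because $F(\l) = \l$, we get $N_\l = \l^N$, which is noncentral in $G$. Fixing the third coordinate forces $F(\t_\mu)\,zm \in zm\, F(\t_\l)\, V_r$, which gives ${}^{(zm)^{-1}}F(\t_\mu) \in F(\t_\l) V_r$. Projecting to $M$ and comparing cocharacters of $S$, we obtain $\sum_i F^i(\mu)$ equal to $N\cdot z(\l)$ up to the Weyl element $m$. The left side is central and the right side is noncentral, a contradiction. So $(\Xi_z^0)^{\BG_m} = \emptyset$.

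\textbf{Step 3: vanishing of the cohomology.} The action of $\BG_m$ commutes with the $T_r^F$-action, because $T_r$ is abelian and $\l$ is central in $M$. A connected torus acts trivially on $H_c^*$, and its fixed-point locus computes the Euler characteristic of $H_c^*$ as a virtual $T_r^F$-module (DL76, 6.5 style argument). Since the fixed locus is empty, $H_c^*(\Xi_z^0,\ov\BQ_\ell)[\phi^{-1}]=0$.

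I expect the Step 1 verification, especially the Frobenius compatibility in the coset coordinate, to be the main obstacle.
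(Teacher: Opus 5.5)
Your plan matches the paper's: a Lang-map identity gives the action, a cocharacter comparison rules out fixed points, and the Deligne--Lusztig torus argument finishes. Step 3 is fine. The fixed-point step, however, rests on a miscomputation.

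\textbf{Step 2.} Because $F(\chi([\tau]))=F(\chi)([\tau^q])$, you have $\tau_\mu=N_\mu(\tau)=\prod_{i=0}^{N-1}(F^i\mu)([\tau^{q^i}])$. So on the reductive quotient, $\tau\mapsto\tau_\mu$ is the cocharacter $\sum_i q^iF^i(\mu)$, not $\sum_iF^i(\mu)$. Likewise $\tau_\lambda$ corresponds to $(\sum_i q^i)\lambda$, not $N\lambda$. Fix the third coordinate and project to $M_r$; there $F(\tau_\lambda)$ is central, so $m$ simply drops out (it is not a Weyl element). This gives $F(\tau_\mu)={}^zF(\tau_\lambda)$, i.e. $\sum_{i=0}^{N-1}q^iF^{i+1}(\mu)=\sum_{i=0}^{N-1}q^i\mu$. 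The left side is a \emph{weighted} orbit sum, and one checks it is $F$-invariant only when $\mu$ itself is. So ``the left side is central'' is exactly what has to be proved; ellipticity does not give it to you.

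The missing step is to show that this weighted identity forces $F(\mu)=\mu$. Pair both sides with $\mu$ under an $F$-invariant inner product on $X_*(T)\otimes\BR$. Since $\langle F^j\mu,\mu\rangle\le\|\mu\|^2$ with equality iff $F^j\mu=\mu$, and all weights $q^i$ are positive, every $F^{i+1}\mu$ equals $\mu$. Only then does ellipticity give $\mu\in X_*(Z)\otimes\BR$, so $\lambda=z^{-1}(\mu)$ is central and $P=G$, a contradiction. This is how the paper concludes.

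\textbf{Step 1.} You isolate the right identity, $L(\tau_\mu)\,zm=zm\,L(\tau_\lambda)$, but you neither prove it nor propose a workable route. For $\tau\in\BF_{q^N}$ both $\tau_\mu$ and $\tau_\lambda$ are $F$-fixed, so the identity is vacuous on those points and says nothing about general $\tau\in\BG_m$.

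The actual input is telescoping in the commutative group $T$: $L(N_F^{F^N}(t))=t^{-1}F^N(t)$, and $F^N$ acts trivially on $X_*(T)$. Hence $L(\tau_\mu)=\mu([\tau^{q^N}\tau^{-1}])$ lies in the image of $\mu$ itself, not of the various $F^i\mu$. Only because of this can you use $\mu=z(\lambda)$ to move it across $z$, where it becomes $L(\tau_\lambda)$. It then passes across $m$ because $\lambda$ is central in $M$.

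Doing the computation also corrects your target. With $x'={}^{\tau_\mu}x$ and $(zm)'=F(\tau_\mu)zmF(\tau_\lambda)^{-1}$ one gets $x'(zm)'=\tau_\mu\,xzm\,\tau_\lambda^{-1}$. Hence $F(x'(zm)')V_r=F(\tau_\mu)F(xzm)F(\tau_\lambda)^{-1}V_r$, with $F(\tau_\lambda)^{-1}$ and not $F^2(\tau_\lambda)^{-1}$. This is what matches ${}^{F(\tau_\mu)}v\cdot(zm)'V_r$.
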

\begin{proof}
   As $F(\l) = \l$ is a central cocharacter of $M$ and $z(\l) = \mu$, we have \begin{align*} &\quad\ L(N_\mu(\t)) zm L(N_\l(\t))\i \\ &= F^N(\mu(\t)) \mu(\t\i) zm F^N(\l(\t)\i) \l(\t) \\ &= \mu(F^N(\t)\t\i) z m \l(F^N(\t)\i\t) \\ &= \mu(F^N(\t)\t\i) z \l(F^N(\t)\i\t) m \\ &= zm. \end{align*} Hence the first statement follows. 
   
   Suppose $(x, v, zmV_r) \in (\Xi_z^0)^{\BG_m}$. Then $zm V_r = F(N_\mu(\t)) zm F(N_\l(\t\i)) V_r$, that is, $F(N_\mu(\t)) = {}^z F(N_\l(\t))$ for $\t \in \BG_m$. More precisely, we have \[\sum_{i=0}^{N-1} q^i F^i(\mu) = \sum_{i=0}^{N-1} q^i \mu,\] which implies that $F(\mu) = \mu$. As $T$ is elliptic,  $\mu$ is a central cocharacter of $G$, which contradicts that $P \subsetneq G$ is a proper parabolic subgroup. Hence $(\Xi_z^0)^{\BG_m} = \emptyset$. As the actions of $\BG_m$ and $T_r^F$ on $\Xi_z^0$ commute with each other, we have $H_c^*(\Xi_z^0, \ov\BQ_\ell)[\phi\i] = 0$ and the proof is finished.    
\end{proof}

\begin{theorem} \label{V-vanish}
    If $T$ is elliptic, then $\CR_{T_r}^{G_r}(\phi)^{V_r^F} = 0$.
\end{theorem}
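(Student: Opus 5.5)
The proof assembles the results of this section. By Lemma \ref{V-geo}, it suffices to show $H_c^*(\Xi, \ov\BQ_\ell)[\phi\i] = 0$.

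First, use the decomposition $\Xi = \bigsqcup_{z \in W_{S,T,r}/W_{M,r}} \Xi_z$. Each $\Xi_z$ is locally closed and $T_r^F$-stable, since $T_r^F$ acts on the left and preserves each $\CI_r z P_r / V_r$. The finitely many pieces can be ordered so that successive unions are open or closed, by the usual closure relations of the Bruhat-type decomposition $G_r = \bigsqcup \CI_{z,r} z P_r$. The long exact sequences of compactly supported cohomology, taken $\phi\i$-isotypically (an exact functor, as $T_r^F$ is finite), then reduce the problem to showing $H_c^*(\Xi_z, \ov\BQ_\ell)[\phi\i]=0$ for each $z$.

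Second, for fixed $z$, use the stratification $\Xi_z = \Xi_z^0 \sqcup \bigsqcup_{f \in \tPhi_{z,r}'} \Xi_z^f$. Order the $f$ by decreasing $f^\dag$, so that the union of $\Xi_z^0$ with an initial segment of the $\Xi_z^f$ is closed and each new stratum is open in what remains. This works because the conditions are of the form ``$v_h = 0$ for $h^\dag > f^\dag$ and $v_f \neq 0$''. Each stratum is $T_r^F$-stable, because conjugation by $T_r$ rescales the coordinates $v_f$ without mixing them; this is the analogue of Lemma \ref{T-stable} and follows from the same commutator estimates. Again by the long exact sequences, the additivity of Euler characteristics in the virtual setting gives
\[
H_c^*(\Xi_z)[\phi\i] = H_c^*(\Xi_z^0)[\phi\i] + \sum_f H_c^*(\Xi_z^f)[\phi\i].
\]

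Third, Lemma \ref{vanish} kills every term $H_c^*(\Xi_z^f)[\phi\i]$. Since $T$ is elliptic, Lemma \ref{empty} kills $H_c^*(\Xi_z^0)[\phi\i]$. Summing over $z$ gives $H_c^*(\Xi)[\phi\i]=0$, hence $\CR_{T_r}^{G_r}(\phi)^{V_r^F}=0$.

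The only point needing care is verifying that the stratifications are by $T_r^F$-stable locally closed pieces in a good order. This is where I would check the coordinate description of $\CI_{z,r}$ via the linear order on affine roots. Everything substantive is already contained in Lemmas \ref{vanish} and \ref{empty}.
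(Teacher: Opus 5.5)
Your proposal is correct and follows the paper's proof, which is the one-line combination of Lemma \ref{V-geo} with the stratifications $\Xi=\bigsqcup_z \Xi_z$ and $\Xi_z=\Xi_z^0\sqcup\bigsqcup_f \Xi_z^f$, followed by Lemmas \ref{vanish} and \ref{empty}; your bookkeeping on ordering the strata spells out what the paper leaves implicit. One small correction: conjugation by $t\in T_r$ does not literally rescale the $v_f$ without mixing, since the unit $\a_f(t)$ spills into the coordinates at $f+1, f+2,\dots$, but the leading coordinate is multiplied by the (nonzero) residue of $\a_f(t)$ and the vanishing conditions are preserved by the commutator estimates of Lemma \ref{T-stable}, as you indicate.
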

\begin{proof}
    It follows from Lemma \ref{V-geo}, Lemma \ref{vanish} and Lemma \ref{empty}.
\end{proof}

\subsection{The application}
Let the notation be as in \S\ref{subsec:c-ind}. We recall the following standard result on parahoric subgroups, which is an immediate corollary of the Cartan decomposition.

\begin{proposition} \label{finite-parabolic}
    Let $H \subseteq G_{\bx, 0}^F$ be a normal open compact subgroup. Then for all but finitely many $w \in (Z^F G_{\bx, 0}^F) \backslash G^F / G_{\bx, 0}^F$ there exists a $k$-rational proper parabolic subgroup $P \subsetneq G$ containing $S$ with unipotent radical $V$ such that $V^F \cap G_{\bx, 0} \subseteq w H w\i$. 
\end{proposition}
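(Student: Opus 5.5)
The plan is to reduce the statement to a standard Cartan-decomposition argument in the apartment of a maximally $k$-split torus. By Bruhat--Tits theory there is a maximal torus $S \subseteq G$ over $k$ such that $S$ contains a maximal $k$-split torus $S_0$ and $\bx$ lies in the apartment $\CA(S, \brk)$. The Cartan/Bruhat decomposition then gives $G^F = \bigsqcup G_{\bx,0}^F \dot\nu\, G_{\bx,0}^F$, where $\nu$ runs over a set of representatives of the relative Iwahori--Weyl group modulo the finite stabilizer of $\bx$. It therefore suffices to take $w = \dot\nu$, with $\nu$ the image of a $k$-rational cocharacter $\l_\nu \in X_*(S_0)$, twisted by finitely many elements of the finite relative Weyl group. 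The quotient by $Z^F$ removes the central directions $X_*(Z) \otimes \BR$. Consequently, apart from finitely many double cosets, $\nu(\bx) - \bx$ has large norm in the image of $X_*(S_0)\otimes\BR$ in $X_*(S_0)\otimes\BR / X_*(Z)\otimes\BR$.

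Next, since $H$ is open and normal in $G_{\bx,0}^F$, it contains $G_{\bx,n+}^F$ for some $n$. Normality also makes the condition invariant under replacing $w$ by $g w g'$ with $g, g' \in G_{\bx,0}^F$. Indeed, if $V' = {}^{g^{-1}}V$ satisfies $V'^F \cap G_{\bx,0} \subseteq w H w\i$, then ${}^g V'$ is again the unipotent radical of a $k$-rational proper parabolic, which may be conjugated to contain $S$ up to replacing $S$ by an $G_{\bx,0}^F$-conjugate. The right factor $g'$ is absorbed because $g' H g'^{-1} = H$. So we may assume $w = \dot\nu$ with $\nu$ a translation by $\l_\nu$ in the $S_0$-direction, with $\l_\nu$ noncentral and of large norm.

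Now choose $P = P_G(\l)$ with $\l = -\l_\nu$, or its dominant-chamber replacement, so that $V$ is generated by the root subgroups of the relative roots $a$ with $\langle a, \l_\nu\rangle < 0$. This is a proper $k$-rational parabolic containing $S$, and it is proper because $\l_\nu$ is noncentral. For each affine root $f$ with $\a_f$ in $V$ and $f(\bx) \ge 0$, conjugation by $w\i$ sends $\CG^f$ to $\CG^{f'}$ with $f'(\bx) = f(\bx) - \a_f(\l_\nu) \cdot(\text{sign}) \ge f(\bx) + c\,|\l_\nu|$ by Lemma \ref{value}. Here $c > 0$ is a uniform constant coming from the finiteness of the root system, taking the minimum of $|\a(\l_\nu)|/|\l_\nu|$ over $\a$ with $\a(\l_\nu)\neq 0$ near a given direction. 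Since finitely many chamber directions occur, we may take a uniform lower bound after possibly enlarging the finite exceptional set.

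The step I expect to be the main obstacle is the uniformity of $c$. If $\l_\nu$ approaches a wall, some $\a(\l_\nu)$ are small but nonzero. To avoid this, I would instead define $V$ using only roots with $\a(\l_\nu) \le -\epsilon|\l_\nu|$. This requires showing that these roots form the unipotent radical of a parabolic: one picks a rational $\l$ in a fixed finite set of facet directions approximating $\l_\nu$ such that $\{\a : \a(\l) < 0\} \subseteq \{\a : \a(\l_\nu) \le -\epsilon|\l_\nu|\}$. With that done, for $|\l_\nu| > n/\epsilon$ we get $w\i (V^F \cap G_{\bx,0}) w \subseteq G_{\bx,n+}^F \subseteq H$. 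Only finitely many $\nu$ modulo $X_*(Z)$ fail this, which is the claim.
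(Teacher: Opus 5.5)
Your proposal is correct. It is the route the paper has in mind: the paper gives no proof beyond calling the statement an immediate corollary of the Cartan decomposition, and your argument fills that in — reduce to $w=\dot\nu$ with $\nu\bx-\bx$ large modulo $X_*(Z)\otimes\BR$, choose $V$ so that all its roots take values $>n$ on $\nu\bx-\bx$, and conclude $w\i(V^F\cap G_{\bx,0})w\subseteq G_{\bx,n+}^F\subseteq H$ via Lemma \ref{value}.

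Two points can be made cleaner than in your write-up. First, do not reduce to pure translations: the finite Weyl part of $\nu$ need not lie in $W_\bx$ (for instance when $\bx$ is in an open alcove). Instead work directly with $v=\nu\bx-\bx$, which is all the estimate uses. Second, instead of the $\epsilon$-approximation, take a closed chamber containing $v$ with simple roots $a_1,\dots,a_l$, and let $V$ be the unipotent radical of the standard parabolic whose Levi is generated by those $a_i$ with $a_i(v)\le n$. Then every root of $V$ has a positive coefficient at some $a_j$ with $a_j(v)>n$, hence takes a value $>n$ on $v$. The parabolic fails to be proper only when all $a_i(v)\le n$, which happens for only finitely many double cosets.
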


\begin{corollary} \label{direct-sum}
    Assume $T$ is elliptic and $\pm \CR_{T_r}^{G_r}(\phi)$ is a genuine representation of $Z^F G_{\bx, 0}^F$. Then \[\pi_{T, \phi} = \text{c-} \ind_{Z^F G_{\bx, 0}^F}^{G^F} \pm \CR_{T_r}^{G_r}(\phi)\] is a direct sum of irreducible supercuspidal representations of $G^F$.
\end{corollary}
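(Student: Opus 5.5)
We follow the strategy of \cite{CI23}: show that $\pi_{T,\phi}$ is admissible, with every matrix coefficient compactly supported modulo the center. A smooth representation with this property decomposes as a direct sum of irreducible supercuspidal representations. Write $\rho = \pm\CR_{T_r}^{G_r}(\phi)$, viewed as a genuine finite-dimensional representation of $K := Z^F G_{\bx,0}^F$.

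\textbf{Step 1 (vanishing of $K$-intertwining far away).} Set $H = G_{\bx, r+}^F$. This is a normal open compact subgroup of $G_{\bx,0}^F$ acting trivially on $\rho$. By the Mackey formula,
\[\pi_{T,\phi}|_K \cong \bigoplus_{w \in K\backslash G^F/K} \mathrm{Ind}_{K\cap {}^wK}^K {}^w\rho.\]
Fix $w$ outside the finite exceptional set of Proposition \ref{finite-parabolic}. That proposition gives a $k$-rational proper parabolic $P = MV \supseteq S$ with $V^F\cap G_{\bx,0}\subseteq wHw\i$. The summand indexed by $w$ is induced from ${}^w\rho$, and ${}^w\rho$ restricted to $wHw\i \cap K$ is trivial. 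Therefore any $K$-homomorphism from $\rho$ into this summand factors through the $(V^F\cap G_{\bx,0})$-coinvariants of $\rho$, or equivalently, by semisimplicity of this pro-$p$ group action, through the invariants $\rho^{V_r^F}$. Here one uses that the image of $V^F\cap G_{\bx,0}$ in $G_r$ is $V_r^F$. By Theorem \ref{V-vanish}, $\rho^{V_r^F}=0$ because $T$ is elliptic.

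\textbf{Step 2 (admissibility).} Fix an irreducible representation $\sigma$ of $K$. Its multiplicity in $\pi_{T,\phi}$ is $\sum_w \dim\Hom_K(\sigma, \mathrm{Ind}_{K\cap{}^wK}^K{}^w\rho)$. I would apply the argument of Step 1 to $\sigma$ in place of $\rho$, shrinking $H$ to a normal subgroup acting trivially on both $\sigma$ and $\rho$. For $w$ outside a finite set, Frobenius reciprocity puts a nonzero $(V^F\cap G_{\bx,0})$-fixed vector inside $\rho$ or its conjugate, which Theorem \ref{V-vanish} (applied to $P$ or a conjugate) forbids. Only finitely many $w$ contribute, each with finite multiplicity. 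Since $K$ is open and compact mod center, admissibility follows from finiteness of these $K$-isotypic multiplicities.

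\textbf{Step 3 (compact support).} Take $v$ supported in $K$ and $v^\vee$ in the contragredient, which is c-$\ind$ of $\rho^\vee$. Their matrix coefficient is supported on the $g$ where the $K$-component of $\pi(g)v$ pairs nontrivially with $v^\vee$. This is a finite union of cosets $KwK$, hence compact modulo $Z^F$. For general vectors we use finite sums. Together with Step 2, the matrix coefficients of this admissible representation are compactly supported mod center. By the standard result (Harish-Chandra/Jacquet), it is then a direct sum of irreducible supercuspidals; one can equivalently check that all Jacquet modules vanish, using $\rho^{V_r^F}=0$ for every proper $P$.

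\textbf{Main obstacle.} The hardest step is the bookkeeping in Step 1. One must check that the parabolic from Proposition \ref{finite-parabolic} can be taken to contain an $S$ compatible with the setup of \S\ref{sec:sum}. One must also identify the image of $V^F\cap G_{\bx,0}$ in $G_r$ with the $V_r^F$ of Theorem \ref{V-vanish}, which requires $\bx$ to lie in the apartment of $S$ after $G_{\bx,0}^F$-conjugation. I would handle this by conjugating $P$ by an element of $G_{\bx,0}^F$, which changes nothing in the Mackey sum.
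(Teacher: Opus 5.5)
Your proof is correct in substance and has the same core as the paper's. Admissibility is reduced, via Proposition \ref{finite-parabolic} and Theorem \ref{V-vanish}, to the vanishing of $\rho^{K\cap gHg\i}$ (with $K=Z^FG_{\bx,0}^F$) for all but finitely many double cosets. The packaging differs:
\begin{itemize}
\item The paper computes $\pi^H=\bigoplus_{g\in K\backslash G^F/H}\rho^{K\cap gHg\i}$ directly. It then uses Bushnell's theorem to reduce the whole statement to admissibility, so it needs neither your Step 1 (the special case $\sigma=\rho$ of Step 2) nor your Step 3.
\item You count $K$-type multiplicities, prove compactness of matrix coefficients by hand, and then appeal to Harish-Chandra's semisimplicity of compact representations with a central character. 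This is more self-contained, but it essentially reproves what Bushnell's theorem supplies.
\end{itemize}

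Two points need tightening.

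\textbf{Orientation in Step 2.} Suppose $H$ acts trivially on $\sigma$. A nonzero element of $\Hom_{K\cap{}^wK}(\sigma,{}^w\rho)$ has image in $({}^w\rho)^{H\cap{}^wK}$, so it yields a nonzero vector of $\rho^{K\cap w\i Hw}$. You must therefore apply Proposition \ref{finite-parabolic} to $w\i$, not to $w$. This is harmless, since $KwK\mapsto Kw\i K$ permutes $K\backslash G^F/K$, so the exceptional set stays finite. With $w$ itself you only learn that $\sigma$ has nonzero coinvariants under $V^F\cap G_{\bx,0}$, which is no contradiction. Conjugating $P$ by $w$ does not help either: it does not produce a subgroup of $K\cap{}^wK$ to which Theorem \ref{V-vanish} applies.

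\textbf{Contragredient in Step 3.} The smooth contragredient of $\text{c-}\ind\,\rho$ is the full induction $\mathrm{Ind}\,\rho^\vee$, not $\text{c-}\ind\,\rho^\vee$. The two agree here only because of the finiteness established in Step 2, using $\dim(\rho^\vee)^{K\cap gHg\i}=\dim\rho^{K\cap gHg\i}$. So this identification has to be derived, not asserted.

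Finally, the compatibility of $P$, $S$ and $\bx$ that you call the main obstacle is built into the statement of Proposition \ref{finite-parabolic} in the paper. The point that actually needs care is the orientation $w$ versus $w\i$.
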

\begin{proof}
    The strategy is the same as in the proof of \cite[Theorem 6.1]{CI23}. Thanks to Bushnell \cite[Theorem 1]{Bush90}, it suffices to show $\pi = \text{c-} \ind_{Z^F G_{\bx, 0}^F}^{G^F} \pm \CR_{T_r}^{G_r}(\phi)$ is an admissible representation of $G^F$. To this end, it suffices to show that for every normal open compact subgroup $H \subseteq G_{\bx, 0}^F$ the space \begin{align*} \pi^H &= \bigoplus_{g \in (Z^F G_{\bx, 0}^F) \backslash G^F / H} (\pm \CR_{T_r}^{G_r}(\phi))^{Z^F G_{\bx, 0}^F \cap g H g\i} \\ & = \bigoplus_{w \in (Z^F G_{\bx, 0}^F) \backslash G^F / G_{\bx, 0}^F} ((\pm \CR_{T_r}^{G_r}(\phi))^{Z^F G_{\bx, 0}^F \cap w H w\i} )^{\oplus m_w}\end{align*} is finite-dimensional, where $m_w\le [G_{\bx,0}^F:H]$ is the number of $Z^F G_{\bx, 0}^F$-$H$ double cosets contained in $Z^F G_{\bx, 0}^F w G_{\bx, 0}^F$. By Proposition \ref{finite-parabolic}, for all but finitely many $w \in (Z^F G_{\bx, 0}^F) \backslash G^F / G_{\bx, 0}^F$ there exists a $k$-rational proper parabolic subgroup $P \subsetneq G$ containing $S$ with unipotent radical $V$ such that $V^F \cap G_{\bx, 0} \subseteq w H w\i$, which, combined with Theorem \ref{V-vanish}, implies that \[(\pm \CR_{T_r}^{G_r}(\phi))^{Z^F G_{\bx, 0}^F \cap w H w\i} \subseteq (\pm \CR_{T_r}^{G_r}(\phi))^{V_r^F} = \{0\}.\] Thus, $\pi^H$ is a finite direct sum of subspaces in $\pm\CR_{T_r}^{G_r}(\phi)$, and hence is finite dimensional as desired.   
\end{proof}


\begin{thebibliography}{GKP00}

\bibitem{Ad}
J.~Adler, \emph{Refined anisotropic $K$-types and supercuspidal representations}, Pacific J.
Math. 185 (1998), 1--32.


\bibitem{BC24}
R.~Bezrukavnikov and C.~Chan, \emph{Generic character sheaves on parahoric subgroups}, arXiv:2401.07189.

\bibitem{Bush90}
C.~Bushnell, \emph{Induced representations of locally profinite groups}, J. Algebra 134 (1990), 104--114.

\bibitem{BK93}
C.~Bushnell and P.~Kutzko, \emph{The Admissible dual of $\GL(N)$ via compact open
subgroups}, Annals of Mathematics Studies, vol. 129, Princeton University Press, Princeton, NJ, 1993

\bibitem{BT1} 
F. Bruhat et J. Tits: \emph{Groupes r\'{e}ductifs sur un corps local, Chapitre I}, Publ. Math. I.H.E.S. 41 (1972), 5--251.

\bibitem{BT2}
F.~Bruhat et J.~Tits, \emph{Groupes r\'{e}ductifs sur un corps local, Chapitre II}, Publ. Math. I.H.E.S. 60 (1984), 5--184.


\bibitem{BW16}
M.~Boyarchenko and J.~Weinstein, \emph{Maximal varieties and the local Langlands correspondence for $\GL(n)$}, J. Amer. Math. Soc. 29 (2016), 177--236.

\bibitem{Chan20} C.~Chan, \emph{The cohomology of semi-infinite Deligne-Lusztig varieties}, J. Reine Angew. Math. 768 (2020), 93--147.


\bibitem{Chan24}
C.~Chan, \emph{The scalar product formula for parahoric Deligne-Lusztig inductions}, arXiv:2405.00671.

\bibitem{CI21} C.~Chan and A.~Ivanov, \emph{Cohomological representations of parahoric subgroups}, Represent. Theory 25 (2021), 1--26.


%\bibitem{CI21b} C.~Chan and A.~Ivanov, \emph{The Drinfeld stratification for $\GL_n$}, Selecta Math. (N.S.) 27 (2021), 54 pp.


%\bibitem{CI21c} C.~Chan and A.~Ivanov, \emph{Affine Deligne-Lusztig varieties at infinite level}, Math. Ann. 380 (2021), 1801--1890.


\bibitem{CI23}
C.~Chan and A.~Ivanov, \emph{On loop Deligne-Lusztig varieties of Coxeter-type for inner forms of $\GL_n$}, Camb. J. Math. 11 (2023), 441--505.


\bibitem{CO25a}
C.~Chan and M.~Oi, \emph{Geometric L-packets of Howe-unramified toral supercuspidal representations}, J. Eur. Math. Soc. (2023), 1--62.


\bibitem{CO25b}
C.~Chan and M.~Oi, \emph{Green functions for positive-depth Deligne-Lusztig induction}, arXiv:2506.04449.


\bibitem{CS17} Z.~Chen and A.~Stasinski, \emph{The algebraisation of higher Deligne-Lusztig representations}, Selecta Math. (N.S.) 23 (2017), 2907--2926.

%\bibitem{Chen18} Z.~Chen, \emph{Green functions and higher Deligne-Lusztig characters}, Doc. Math. 23 (2018), 2027–2041.

\bibitem{CS23}
Z.~Chen and A.~Stasinski, \emph{The algebraisation of higher level Deligne-Lusztig representations II: odd levels}, arXiv: 2311.05354.


\bibitem{DL76}
P.~Deligne and G.~Lusztig, \emph{Representations of reductive groups over finite fields}, Ann. Math. 103 (1976), 103--161.

%\bibitem{DI} O.~Dudas and A.~Ivanov, \emph{Orthogonality relations for deep level Deligne-Lusztig schemes of Coxeter type}, Forum Math. Sigma 12 (2024), 27 pp.


%\bibitem{F21a} J.~Fintzen, \emph{On the construction of tame supercuspidal representations}, Compos. Math. 157 (2021), 2733--2746.


%\bibitem{F21b} J.~Fintzen, \emph{Types for tame  $p$-adic groups}, Ann. of Math. (2) 193 (2021), 303--346.


%\bibitem{FKS} J.~Fintzen, T.~Kaletha and L.~Spice, \emph{A twisted Yu construction, Harish-Chandra characters, and endoscopy}, Duke Math. J. 172 (12), 2241--2301. 

\bibitem{FS}
J.~Fintzen and D.~Schwein, \emph{Construction of tame supercuspidal representations in arbitrary residue characteristic}, arXiv:2501.18553.


\bibitem{Ger75a} 
P.~G\'{e}rardin, \emph{On cuspidal representations of  p -adic reductive groups}, Bull. Amer. Math. Soc. 81 (1975), 756--758.


\bibitem{Ger75b} 
P.~G\'{e}rardin, \emph{Construction de s\'{e}ries discr\`{e}tes $p$-adiques}, Lecture Notes in Mathematics, Vol. 462. Springer-Verlag, Berlin-New York, 1975.


%\bibitem{Howe}R.~Howe, \emph{Tamely ramified supercuspidal representations of $\GL_n$}, Pacific J. Math. 73 (1977), 437--460.

%\bibitem{HM} J.~Hakim and F.~Murnaghan, \emph{Distinguished tame supercuspidal representations}, Int. Math. Res. Pap. IMRP (2008), 166 pp.


\bibitem{Iva23}
A.~Ivanov, \emph{Arc-descent for the perfect loop functor and $p$-adic Deligne-Lusztig spaces}, J. Reine Angew. Math. 794 (2023), 1--54.


%\bibitem{I23b} A.~Ivanov, \emph{On a decomposition of $p$-adic Coxeter orbits}, \'{E}pijournal G\'{e}om. Alg\'{e}brique 7 (2023), 41 pp.


\bibitem{IN24} A.~Ivanov, S.~Nie, \emph{The cohomology of $p$-adic Deligne-Lusztig schemes of Coxeter type}, J. Inst. Math. Jussieu 24 (2025), 1429--1462.


\bibitem{IN25} 
A.~B.~Ivanov, S.~Nie, \emph{Convex elements and deep level Deligne-Lusztig varieties}, arXiv:2503.13412.


\bibitem{IN26} 
A.~B.~Ivanov, S.~Nie, \emph{Deep level Deligne--Lusztig induction for tamely ramified tori}, arXiv:2601.06929.


\bibitem{INY25} 
A.~B.~Ivanov, S.~Nie and Z.~Yu, \emph{An alternative construction of character sheaves on parahoric subgroups}, arXiv:2509.18442.


\bibitem{INT24} 
A.~Ivanov, S.~Nie and P.~Tan, \emph{Inner products of deep level Deligne--Lusztig representations of Coxeter type}, arXiv:2407.18694.


\bibitem{Kal}
T.~Kaletha, \emph{Regular supercuspidal representations}, J. Amer. Math. Soc. 32 (2019), 1071--1170.

\bibitem{LN26}
B.~Liu and S.~Nie, \emph{Symmetric spaces over a finite ring}, arXiv:2608.17765.

\bibitem{Lu79}
G.~Lusztig, \emph{Some remarks on the supercuspidal representations of $p$-adic semisimple groups}, Proc. Sympos. Pure Math. 1979, 171--175.

\bibitem{Lu04}
G.~Lusztig, \emph{Representations of reductive groups over finite rings}, Represent. Theory 8 (2004), 1--14.

\bibitem{Lu90}
G.~Lusztig, \emph{Symmetric spaces over a finite ring}, The Grothendieck Festschrift. Modern Birkh\"{a}user Classics, vol 88. Birkh\"{a}user, Boston, MA, 2007.


%\bibitem{Sta09} A.~Stasinski, \emph{Unramified representations of reductive groups over finite rings}, Represent. Theory 13 (2009), 636--656.

\bibitem{Kim} J.-L.~Kim, \emph{Supercuspidal representations: an exhaustion theorem}, J. Amer. Math. Soc. 20 (2007), 273--320.


\bibitem{Nie24}
S.~Nie, \emph{Decomposition of higher Deligne-Lusztig representations, preprint}, arXiv:2406.06430.


\bibitem{Spr85}
T.~Springer, \emph{Some results on algebraic groups with involutions}, in Algebraic groups and related topics (Kyoto/Nagoya, 1983), 525--543.

\bibitem{SGA3}
M.~Demazure, A.~Grothendieck. {SGA 3, Sch\'{e}mas en Groupes III}, Lecture Notes in
Mathematics 153. Berlin: Springer, 1970.


\bibitem{SS08}
V.~S\'{e}cherre and S.~Stevens, \emph{Repr\'{e}sentations lisses de $GL_m(D)$. IV. Repr\'{e}sentations
supercuspidales}, J. Inst. Math. Jussieu 7 (2008), 527--574.


\bibitem{Ste08}
S.~Stevens, \emph{The supercuspidal representations of $p$-adic classical groups}, Invent. Math. 172 (2008), 289--352.



\bibitem{Tak26}
Y.~Takaya, \emph{Characteristic-free approaches around Yu's construction}, arXiv:2605.03638.



\bibitem{Yu}
J.-K.~Yu, \emph{Construction of tame supercuspidal representations}, J. Amer. Math. Soc. 14 (2001), 579--622.

\bibitem{HR}
T.~Haines and M.~Rapoport \emph{On parahoric subgroups}, arXiv:0804.3788

\bibitem{Fin21b}
J.~Fintzen, \emph{Types for tame $p$-adic groups}, Ann. of Math. (2) \textbf{193} (2021), 303--346.


\end{thebibliography}
\end{document}